\numberwithin{equation}{section}
\newtheorem{theorem}{Theorem}[section]
\newtheorem{lemma}[theorem]{Lemma}
\newtheorem{prop}[theorem]{Proposition}
\newtheorem{definition}[theorem]{Definition}
\newtheorem{remark}[theorem]{Remark}
\numberwithin{theorem}{section}
\def\re{\mathbb{R}}
\def\tcr{\textcolor{red}}
\def\tcb{\textcolor{blue}}
\def\({\left(}
\def\){\right)}
\def\pd{\partial}
\def\ol{\overline}
\def\lap{\Delta}
\def\ep{\varepsilon}
\def\w{\omega}
\def\la{\lambda}
\def\tx{\tilde{x}}
\def\ty{\tilde{y}}
\def\tcr{\relax}
\def\tcb{\relax}
\def\tcrr{\textcolor{red}}
\def\tcbb{\textcolor{blue}}
\def\tcrr{\relax}
\def\tcbb{\relax}
\begin{document}

\begin{frontmatter}



\title{\tcr{Critical} Hardy inequality on the half-space \\via \tcr{the} harmonic transplantation}


\author[MS]{Megumi Sano\corref{Sano}\fnref{label1}}
\ead{smegumi@hiroshima-u.ac.jp}
\fntext[label1]{Corresponding author.}

\author[FT]{Futoshi Takahashi}
\ead{futoshi@sci.osaka-cu.ac.jp}

\address[MS]{Laboratory of Mathematics, School of Engineering,
Hiroshima University, Higashi-Hiroshima, 739-8527, Japan \tcb{/
Mathematical Institute, Tohoku University, Sendai, 980-8578, Japan}}
\address[FT]{Department of Mathematics, Graduate School of Science, Osaka City University, Sumiyoshi-ku, Osaka, 558-8585, Japan}

\begin{keyword}
Harmonic transplantation 
 \sep The Hardy inequality 

\MSC[2010] 35A23 \sep 35J20 \sep 35A08 
\end{keyword}

\date{\today}

\begin{abstract}
We \tcr{prove a} critical Hardy inequality on the half-space $\re^N_+$ by using 
the harmonic transplantation \tcr{for functions in} $\dot{W}_0^{1,N}(\re^N_+)$. 
\tcr{Also} we give an improvement of the subcritical Hardy inequality on $\dot{W}_0^{1,p}(\re^N_+)$ for $p \in [2, N)$\tcr{,} which converges to the critical Hardy inequality 
\tcr{when} $p \nearrow N$. 
Sobolev type inequalities are also discussed. 
\end{abstract}

\end{frontmatter}

\tableofcontents



%
%
\section{Introduction and main results}\label{Intro}

Let $\Omega \subset \re^N$\tcr{, $N \ge 2$,} be a domain with $a \in \Omega$ and $1 < p <N$. 
The  Hardy inequality
\begin{equation}
\label{H_p}
\( \frac{N-p}{p} \)^p \int_{\Omega} \frac{|u|^p}{|x -a|^p} dx \le \int_{\Omega} | \nabla u |^p dx,
\end{equation}
holds for all $u \in \dot{W}^{1,p}_0(\Omega)$, where $\dot{W}_0^{1,p}(\Omega)$ is \tcr{the homogeneous Sobolev space defined as} the completion of $C_c^{\infty}(\Omega)$ 
with respect to the \tcr{(semi-)}norm $\| \nabla (\cdot )\|_{L^p(\Omega)}$. 
It is well-known that $(\frac{N-p}{p})^p$ is the best constant and is not attained. 
Hardy's best constant $\( \frac{N-p}{p} \)^p$ plays an important role \tcr{in investigating qualitative} properties of solutions to elliptic \tcr{or} parabolic partial differential equations, 
\tcr{such as, stability, instantaneous blow-up, and global-in-time asymptotics,} see \tcr{for example} \cite{BG, BV}.

On the other hand, in the limiting case where $p=N$, the Hardy inequality (\ref{H_p}) looks degenerate as the best constant vanishes. 
However \tcr{in this case,} we can obtain the critical Hardy inequality on bounded domains:
\begin{align}\label{H_N}
	\( \frac{N-1}{N} \)^N \int_{\Omega} \frac{|u|^N}{|x-a|^N \( \log \frac{R}{|x-a|} \)^N} dx 
	\le \int_{\Omega} | \nabla u |^p dx \quad \( u \in \dot{W}_0^{1,N}(\Omega)\)
\end{align}
as a limit of the Hardy inequality (\ref{H_p}) as $p \nearrow N$, where $R=\sup_{x \in \Omega} |x-a|$, 
see \cite{I} or I.-(ii) in \S \ref{S Harmonic}.
It is also known that $(\frac{N-1}{N})^N$ is the best constant and is not attained, see e.g. \cite{AS,II, B-TF}.


In the present paper, we introduce \tcr{a} critical Hardy inequality \tcr{similar to} (\ref{H_N}) when $\Omega$ is unbounded, especially the half-space
$\re^N_+ =\{ (x,y) \,| \, x \in \re^{N-1}, y>0 \}$. 
Note that if $\Omega = \re^N$, this kind of inequality does not hold even if we restrict functions to radially symmetric \tcr{ones}, see Proposition \ref{Prop re^N} in \S \ref{S App}. 

Our first result is the following.


\begin{theorem}\label{Thm CH}(\tcr{Critical} Hardy inequality on the half-space)
Let $N \ge 2$. Then the inequality 
\begin{align}\label{H_N half}
 \( \frac{N-1}{N} \)^N \int_{\re^N_+} &\frac{|u(x,y)|^N}{\( |x|^2 +(1-y)^2 \)^{\frac{N}{2}} \( \frac{|x|^2 +(1+y)^2}{4} \)^{\frac{N}{2}}  \( \log \sqrt{ \frac{|x|^2 +(1+y)^2}{|x|^2 +(1-y)^2}} \)^N }  \,dxdy \notag \\
 &\le \int_{\re^N_+} |\nabla u(x,y)|^N \,dxdy
\end{align}
holds for any $u \in \dot{W}_0^{1,N}(\re^N_+)$. Furthermore, $( \frac{N-1}{N})^N$ is the best constant and is not attained. 
\end{theorem}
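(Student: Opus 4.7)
The plan is to pull back the known critical Hardy inequality (\ref{H_N}) from the unit ball $B_1 \subset \re^N$ to the half-space $\re^N_+$ via a conformal M\"obius transformation, which realizes the ``harmonic transplantation'' suggested by the title. Concretely, writing $e_N = (0,\ldots,0,1)$, I would introduce the Cayley-type map
\begin{equation*}
\Phi(w) := -e_N + \frac{2(w+e_N)}{|w+e_N|^2}, \qquad w = (x,y) \in \re^N_+,
\end{equation*}
which is an inversion at $-e_N$ followed by a translation and which takes $\re^N_+$ diffeomorphically onto $B_1$, sending $e_N = (0,1)$ to the origin and $\{y=0\}$ onto the unit sphere. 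It is conformal with factor $\lambda(w) = 2/|w+e_N|^2$, so $|\det D\Phi(w)| = \lambda(w)^N = 2^N / (|x|^2+(1+y)^2)^N$.

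For $u \in C_c^\infty(\re^N_+)$, set $v := u \circ \Phi^{-1} \in C_c^\infty(B_1)$. The conformal invariance of the $N$-Dirichlet energy in dimension $N$ (which follows from $|\nabla_w u(w)| = \lambda(w)\,|\nabla_z v(\Phi(w))|$ combined with $dz = \lambda^N dw$) yields
\begin{equation*}
\int_{B_1}|\nabla v(z)|^N\,dz = \int_{\re^N_+}|\nabla u(w)|^N\,dw,
\end{equation*}
while applying the critical Hardy inequality (\ref{H_N}) on $B_1$ with $a=0$, $R=1$ to $v$ produces on the left the weight $|z|^{-N}(\log(1/|z|))^{-N}$. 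Pulling this weight back via $z = \Phi(w)$ hinges on the identity
\begin{equation*}
|\Phi(w)|^2 = \frac{|x|^2+(1-y)^2}{|x|^2+(1+y)^2},
\end{equation*}
so that $\log(1/|\Phi(w)|) = \log\sqrt{(|x|^2+(1+y)^2)/(|x|^2+(1-y)^2)}$; combining this with the Jacobian formula above reproduces exactly the weight on the left-hand side of (\ref{H_N half}). A standard density argument then extends the inequality to $u \in \dot W^{1,N}_0(\re^N_+)$.

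For the sharpness and non-attainment, note that $u \mapsto u \circ \Phi^{-1}$ is a bijective linear isomorphism $\dot W^{1,N}_0(\re^N_+) \to \dot W^{1,N}_0(B_1)$ that identifies both the Dirichlet energies and the Hardy functionals on the two sides; hence the best constant equals the known optimal value $((N-1)/N)^N$ on $B_1$, and non-attainment transfers directly from the ball case. I expect the most delicate step to be the explicit change of variables: verifying the formula for $|\Phi(w)|^2$, correctly tracking the factor of $4$ that produces the $(\tfrac{|x|^2+(1+y)^2}{4})^{N/2}$ term, and carefully justifying the conformal invariance of $\int|\nabla\cdot|^N$ together with the density/isomorphism argument needed to pass from $C_c^\infty$ to the full homogeneous Sobolev spaces.
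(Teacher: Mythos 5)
Your proposal is correct and follows essentially the same route as the paper: the map $\Phi(w) = -e_N + 2(w+e_N)/|w+e_N|^2$ is exactly the Cayley-type transformation ${\bf B}$ of \S\ref{S Cayley} (the paper writes it as $R \circ J \circ T_{e_N} \circ S_2 \circ J \circ T_{-e_N}$, but the two expressions coincide), and like the paper you transfer the critical Hardy inequality on $B_1^N$, its optimal constant, and its non-attainment back to $\re^N_+$ via the same Jacobian computation $|\det D\Phi| = 2^N/(|x|^2+(1+y)^2)^N$ and the identity $|\Phi(w)|^2 = (|x|^2+(1-y)^2)/(|x|^2+(1+y)^2)$. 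The only cosmetic difference is that you invoke conformal invariance of the $N$-Dirichlet energy directly from $D\Phi^T D\Phi = \lambda^2 I$, whereas the paper verifies the invariance of $\|\nabla \cdot\|_{L^N}$ generator by generator in Proposition \ref{Prop Mobius}.
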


\begin{remark}(Asymptotic behavior of the potential function)
Set 
\begin{equation}
\label{V_N}
V_N (x, y) := \frac{1}{\( |x|^2 +(1-y)^2 \) \( \frac{|x|^2 +(1+y)^2}{4} \)  \( \log \sqrt{ \frac{|x|^2 +(1+y)^2}{|x|^2 +(1-y)^2}} \)^2 }.
\end{equation}
\tcr{Then the inequality (\ref{H_N half}) is of the form
\begin{align*}
 \( \frac{N-1}{N} \)^N \int_{\re^N_+} V_N(x,y)^{\frac{N}{2}} |u(x,y)|^N \, dxdy \le \int_{\re^N_+} |\nabla u(x,y)|^N \,dxdy.
\end{align*}
The inequality (\ref{H_N half}) has two aspects: 
one is the critical Hardy inequality on bounded domains and the other is the geometric Hardy inequality on $\re^N_+$, 
which involves \tcb{the distance from the boundary} $\pd \re^N_{+}$.
\tcb{Indeed,} the potential function $V_N(x,y)^{\frac{N}{2}}$ behaves like $\( |x|^2 +(1-y)^2 \)^{-\frac{N}{2}} \( \log \frac{2}{\sqrt{|x|^2 +(1-y)^2}}  \)^{-N}$ when $(x, y)$ is near to $e_N = (0,1) \in \re^N_{+}$, 
which is similar to the critical Hardy potential on the ball $B_2(e_N)$ with radius $2$ and center $e_N$. 
Also $V_N(x,y)^{\frac{N}{2}}$ behaves like $y^{-N} = {\rm dist }((x,y), \pd\re^N_{+})^{-N}$ near the boundary $\pd \re^N_+$ or $\infty$, which is similar to the geometric Hardy potential on $\re^N_+$. 
In fact, since $Y:=\frac{|x|^2 +(1+y)^2}{|x|^2 +(1-y)^2}=1 + o(1)$ as $|x|^2 +(y-1)^2 \to \infty$ or $y \to 0$ and $\log Y = Y-1 + o(1)$ as $Y \to 1$, we have
\begin{align*}
V_N(x,y)^{\frac{N}{2}} 
&= \frac{1}{\( |x|^2 +(1-y)^2 \)^{\frac{N}{2}} \( \frac{|x|^2 +(1+y)^2}{4} \)^{\frac{N}{2}}  \( \log \sqrt{Y} \)^N } \\ 
&= \frac{1}{\( |x|^2 +(1-y)^2 \)^{\frac{N}{2}} \( \frac{|x|^2 +(1+y)^2}{4} \)^{\frac{N}{2}}  \( \frac{Y-1}{2} \)^N } + o(1) \\ 
&= \frac{\( |x|^2 +(1-y)^2 \)^{\frac{N}{2}}}{ y^N \( |x|^2 +(1+y)^2 \)^{\frac{N}{2}} } + o(1) \\
& = O(y^{-N})
\end{align*}
}
as $|x|^2 +(y-1)^2 \to \infty$ or $y \to 0$. 
\end{remark}


Next, we give an improvement of (\ref{H_p}) which \tcr{yields} (\ref{H_N half}) as $p \nearrow N$. 
Improvements of the Hardy and the Hardy-Sobolev inequalities on balls are studied for radially symmetric functions in \cite{FKR,I,S(ArXiv)}. 
However, on the half-space $\re^N_+$, we cannot consider \tcr{radial} symmetry 
since radial functions which are zero on the boundary $\pd \re^N_+$ \tcr{must be identically zero.} 
Instead of \tcr{radial} symmetry, we introduce the following \tcr{new} symmetry for functions $u=u(x,y)$ on $\re^N_+$:

\tcr{Put}
\begin{align}\label{def U}
U_p (x,y)=  
	\begin{cases}
	\frac{p-1}{N-p} \w_{N-1}^{-\frac{1}{p-1}} 
	\left[ \( |x|^2 +(1-y)^2 \)^{-\frac{N-p}{2(p-1)}} - \( |x|^2 +(1+y)^2 \)^{-\frac{N-p}{2(p-1)}} \right] 
	\, &\text{if} \, p \in (1,N), \\
	\w_{N-1}^{-\frac{1}{N-1}} \log \sqrt{ \frac{ |x|^2 +(1+y)^2}{ |x|^2 +(1-y)^2}} 
	&\text{if}\, p=N,
	\end{cases}
\end{align}
\tcr{where} $\omega_{N-1}$ is the area of the unit sphere $\mathbb{S}^{N-1}$ in $\re^N$.
\tcr{Note that the function $U_p(x,y)$ is obtained from the fundamental solution of the $p$-Laplacian on $\re^N$ with singularity $e_N = (0, 1)$, by ``reflecting the singularity" with respect to the boundary $\pd \re^N_{+}$.
So $U_p \equiv 0$ on $\pd \re^N_{+}$.
However, it is different from $p$-Green's function $G_{\re^N_+, e_N}(x,y)$ on $\re^N_{+}$ when $p \not= 2$ and $p \not= N$, see \S \ref{S Green}.}

\tcr{We consider functions on $\re^N_{+}$ of the form}
\begin{align}\label{*'}
	u(x,y) = \tilde{u}(s),\,\text{where}\,\, s= U_p(x,y), \quad \tcr{(x, y) \in \re^N_{+},}
\end{align}
\tcr{for some function $\tilde{u}$ on $\re$ with the property $\tilde{u}(0) = 0$.}
\tcr{In the following, with some ambiguity, we identify $\tilde{u}$ as $u$ and write, for example, $u(x, y) = u(s)$, $s = U_p(x,y)$ for $(x, y) \in \re^N_{+}$.}
\tcr{Thus a function of the form \eqref{*'} has the same value on each level set of $U_p$ and vanishes on $\pd \re^N_{+}$.}

Our second result is an improvement of the Hardy inequality (\ref{H_p}) on $\re^N_+$ for functions with the symmetry (\ref{*'}).  


\begin{theorem}(\tcr{Improved} Hardy inequality for $p \ge 2$)
\label{Thm IH}
Let $2 \le p < N$. 
Then the inequality
\begin{align}
\label{IH}
	\( \frac{N-p}{p} \)^p &\int_{\re^N_+} \frac{V_p(x,y)^{\frac{p}{2}}}{\( |x|^2 +(1-y)^2 \)^{\frac{p}{2}}} |u(x,y)|^p \,dxdy 
	\le \int_{\re^N_+} |\nabla u(x,y)|^p \,dxdy
\end{align}
holds for any $u \in \dot{W}_0^{1,p} (\re^N_+)$ \tcr{of the form} (\ref{*'}), 
where 
\tcr{
\begin{align}
\label{V_p}
\begin{cases}
	&V_p(x,y) = \frac{1+ X^{\frac{N-1}{p-1}} -2X^{\frac{N-p}{2(p-1)}} \( |x|^2 +(1+y)^2 \)^{\tcrr{-1}} (|x|^2 +y^2 -1)}
	{\left[ 1-X^{\frac{N-p}{2(p-1)}} \right]^{2}}, \\
	&X = \frac{|x|^2 +(1-y)^2}{|x|^2 + (1+y)^2} \in [0, 1).
\end{cases}
\end{align}
}
Furthermore, $(\frac{N-p}{p})^p$ is the best constant and is not attained.
\end{theorem}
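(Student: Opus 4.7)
The plan is to reduce the inequality \eqref{IH} to a weighted one-dimensional Hardy inequality on $(0,\infty)$ by exploiting the $U_p$-radial ansatz \eqref{*'} together with a pointwise algebraic identity that rewrites the potential $V_p/A$ (with $A := |x|^2+(1-y)^2$) as $|\nabla U_p|^2/U_p^2$.

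The first step is a direct computation of $|\nabla U_p|^2$ from \eqref{def U}. Setting $B := |x|^2+(1+y)^2$ and $X := A/B$, routine differentiation and collection of terms yield
\[
 \omega_{N-1}^{2/(p-1)}\, |\nabla U_p|^2 = A^{-(N-1)/(p-1)}\, V_p(x,y)\, \left(1 - X^{(N-p)/(2(p-1))}\right)^{\!2},
\]
while $U_p^2 = \bigl(\tfrac{p-1}{N-p}\bigr)^{\!2}\,\omega_{N-1}^{-2/(p-1)}\,A^{-(N-p)/(p-1)}\,(1-X^{(N-p)/(2(p-1))})^2$. Dividing and simplifying via $(N-1)/(p-1)-(N-p)/(p-1)=1$ produces the central identity
\[
 \frac{V_p(x,y)}{A(x,y)} = \left(\frac{p-1}{N-p}\right)^{\!2}\frac{|\nabla U_p(x,y)|^2}{U_p(x,y)^2}.
\]
Raising to the power $p/2$, \eqref{IH} is seen to be equivalent to
\[
 \left(\frac{p-1}{p}\right)^{\!p}\int_{\re^N_+}\frac{|\nabla U_p|^p}{U_p^p}\,|u|^p\,dx\,dy \le \int_{\re^N_+}|\nabla u|^p\,dx\,dy,\qquad u=\tilde u(U_p).
\]

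For $u=\tilde u(U_p)$, $|\nabla u|=|\tilde u'(U_p)|\,|\nabla U_p|$ almost everywhere, so the coarea formula with $s=U_p$ converts both sides into one-dimensional integrals against the common weight $G(s):=\int_{\{U_p=s\}}|\nabla U_p|^{p-1}\,d\mathcal{H}^{N-1}$. The problem thus reduces to the weighted one-dimensional Hardy inequality
\[
 \left(\frac{p-1}{p}\right)^{\!p}\int_0^\infty\frac{|\tilde u(s)|^p}{s^p}\,G(s)\,ds \le \int_0^\infty|\tilde u'(s)|^p\,G(s)\,ds,\qquad \tilde u(0)=0,
\]
which I would prove by substituting $\tilde u(s)=s^{(p-1)/p}h(s)$ and invoking the elementary convexity inequality $|a+b|^p\ge|a|^p+p|a|^{p-2}ab$ (valid for $p\ge 2$); this yields the pointwise bound $|\tilde u'|^p\ge\bigl(\tfrac{p-1}{p}\bigr)^p|\tilde u|^p/s^p+\bigl(\tfrac{p-1}{p}\bigr)^{p-1}\frac{d}{ds}\bigl(|\tilde u|^p/s^{p-1}\bigr)$. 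Multiplying by $G$ and integrating the last term by parts (with vanishing boundary contributions for admissible $\tilde u$) leaves the remainder $-\bigl(\tfrac{p-1}{p}\bigr)^{p-1}\int_0^\infty G'(s)\,|\tilde u(s)|^p/s^{p-1}\,ds$, which is non-negative provided that $G$ is non-increasing.

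The principal obstacle is therefore to establish $G'(s)\le 0$. By the divergence theorem applied on $\{U_p>s\}\setminus B_\epsilon(e_N)$ together with the normalization $-\Delta_p\Phi_p(\,\cdot-e_N)=\delta_{e_N}$, where $\Phi_p$ denotes the fundamental solution of $-\Delta_p$, one obtains $G(s)=1+\int_{\{U_p>s\}}(-\Delta_p U_p)\,dx\,dy$, so $G'\le 0$ becomes equivalent to the $p$-superharmonicity $-\Delta_p U_p\ge 0$ on $\re^N_+\setminus\{e_N\}$. Although $U_p$ is the difference of the fundamental solution $\Phi_p(\,\cdot-e_N)$ and its smooth reflection $\Phi_p(\,\cdot+e_N)$ (whose pole $-e_N$ lies outside $\re^N_+$), the nonlinearity of $\Delta_p$ for $p\ne 2$ forces superharmonicity to be verified by a direct computation using $-\Delta_p v=-|\nabla v|^{p-2}\Delta v-(p-2)\,|\nabla v|^{p-4}\,\langle D^2 v\,\nabla v,\nabla v\rangle$, combined with the asymmetry $|x-e_N|<|x+e_N|$ that holds on $\re^N_+$. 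Finally, sharpness of the constant $\bigl(\tfrac{N-p}{p}\bigr)^p$ and non-attainability follow by testing with approximate extremals $\tilde u_\epsilon(s)=s^{(p-1)/p}\eta_\epsilon(\log s)$ with logarithmic cutoffs $\eta_\epsilon$ concentrating near $s=+\infty$ (i.e.\ near the singularity $e_N$), mimicking the optimizing sequences of the one-dimensional Hardy inequality; no nonzero extremal exists because the formal minimizer $\tilde u(s)=s^{(p-1)/p}$ does not belong to $\dot{W}_0^{1,p}(\re^N_+)$.
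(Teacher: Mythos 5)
Your proposal is correct and uses essentially the same approach as the paper: both proofs hinge on the $p$-superharmonicity of $U_p$ for $p\ge 2$ (Proposition \ref{Prop cal U}), which makes the coarea weight $G(s)=1+F_p(s)$ non-increasing, and both reduce the problem to a weighted one-dimensional Hardy inequality in a single level-set variable. Your version is slightly more streamlined in that you reduce directly to one weighted one-dimensional Hardy inequality in $s=U_p$ carrying the full weight $G$, whereas the paper first transplants to a radial function on $\re^N$ (Lemmas \ref{Lemma Omega gene} and \ref{Lemma IH gene}) and then separates the classical Hardy inequality from the $F_p$-weighted remainder (Lemma \ref{Lemma:claim 2}), but these are the same computation organized differently.
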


\begin{remark}\label{Rem without sym}
\tcrr{Actually, the inequality (\ref{IH}) holds for functions without the symmetry (\ref{*'}) by combining Proposition \ref{Prop sol U} and a result in \cite{DD}, see Theorem \ref{Thm IH without sym} in \S \ref{S Proof}. Our method is based on the harmonic transplantation.} 
\end{remark}

\begin{remark}($V_p \ge 1$)
\tcr{We remark here} that (\ref{IH}) is an improvement of the Hardy inequality (\ref{H_p}) \tcr{with $a = e_N$} since $V_p (x, y) \ge 1$. 
In fact, for any $(x,y) \in \re^N_+ \cap \overline{B_1^N}$, we have 
$V_p(x,y) \ge 1+ X^{\frac{N-1}{p-1}} \ge1$. 
Also, for any $(x,y) \in \re^N_+ \setminus \overline{B_1^N}$, 
we have 
\begin{align*}
	V_p(x,y) &=  
	1+ \frac{X^{\frac{N-p}{p-1}}}{(1-X^{\frac{N-p}{2(p-1)}}  )^2} 
\left[ X-1+ X^{- \frac{N-p}{2(p-1)}}\frac{4(y+1)}{|x|^2 + (1+y)^2} \right]   \\
	&= 1+ \frac{X^{\frac{N-p}{p-1}}}{(1-X^{\frac{N-p}{2(p-1)}}  )^2} 
\left[ \left\{ X^{- \frac{N-p}{2(p-1)}} -1 \right\} \frac{4y}{|x|^2 + (1+y)^2} + X^{- \frac{N-p}{2(p-1)}}\frac{4}{|x|^2 + (1+y)^2} \right] \\
	&\ge 1 
\end{align*} 
\tcr{since $X \in [0,1)$.}
\end{remark}

\begin{remark}
Unlike (\ref{H_p}), it is possible to take the limit \tcr{$p \nearrow N$ in} the improved Hardy inequality (\ref{IH}).
\tcr{Thus we obtain Theorem \ref{Thm CH} (for functions with symmetry \eqref{*'}) from Theorem \ref{Thm IH} in this way.}

In fact, since $1-X^s = s \log \frac{1}{X} + \tcr{o(s)}$ as $s \to 0$, 
\tcr{taking $s = \frac{N-p}{2(p-1)}$,} we see that \tcr{$V_p$ in \eqref{V_p} satisfies}
\begin{align*}
	&\( \frac{N-p}{p} \)^p \frac{V_p(x,y)^{\frac{p}{2}}}{\( |x|^2 +(1-y)^2 \)^{\frac{p}{2}} }  \\
	&= \( \frac{N-p}{p} \)^p 
	\frac{\left\{ 1+X -2 \( |x|^2 +(y+1)^2 \)^{-1} (|x|^2+ y^2  -1) \right\}^{\frac{N}{2}}}{ \( |x|^2 +(1-y)^2 \)^{\frac{N}{2}}  
	\left[ \frac{N-p}{2(p-1)} \log \frac{1}{X} \right]^{p}} + o(1) \\
	&= \( \frac{N-1}{N} \)^N V_N (x,y)^{\frac{N}{2}} + o(1) \quad (p \nearrow N)
\end{align*}
\tcr{where $V_N$ is defined in \eqref{V_N}.}
Therefore, we obtain (\ref{H_N half}) as a limit of (\ref{IH}) as $p \nearrow N$.

\tcr{However, note that, Theorem \ref{Thm CH} is proved by another method in \S \ref{S Proof} and valid for functions without any symmetry.} 
\end{remark}


This paper is organized as follows:
In \S \ref{S Green}, we show propositions about the function $U_p(x,y)$ in \eqref{def U}, which coincides with \tcr{$p$-}Green's function \tcr{$G_{\re^N_+, e_N}(x,y)$} when $p=2$ or $N$. 
Although $U_p$ is \tcr{different from $G_{\re^N_+, e_N}$} for $p \in (2, N)$, \tcr{we can prove that} $U_p$ is superharmonic for $p \in (2, N)$ on \tcr{$\re^N_+ \setminus \{ e_N \}$.} 
This is a key point of the proof of Theorem \ref{Thm IH}.
In \S \ref{S Transformation}, we recall \tcr{the} M\"obius transformation and \tcr{the} harmonic transplantation proposed by Hersch \cite{H}. 
We point out that various transformations so far \tcr{appeared in references} can be understood as a special or a general case of harmonic transplantation. 
Also, we explain the difference between \tcr{two transformations.} 
In \S \ref{S Proof}, we show Theorem \ref{Thm CH} by \tcr{exploiting} the M\"obius transformation. 
Due to the lack of the explicit form of \tcr{$p$-}Green's function \tcr{$G_{\re^N_+, e_N}$} for $p \in (2,N)$, 
it seems difficult to apply the original harmonic transplantation which exploits the $p$-Green's functions,
to obtain an improvement of the Hardy inequality on the half-space $\re^N_+$, see Theorem \ref{Thm psi} in \S \ref{S Transformation}.
We use $U_p$ in \eqref{def U} instead of $p$-Green's function $G_{\re^N_{+}, e_N}$ to define a modified version of the harmonic transplantation.
By usng this new transformation, we show Theorem \ref{Thm IH}.
In the last of \S \ref{S Proof}, we mention that these transformations can be also applicable to Sobolev type inequalities. 
In \S \ref{S App}, we show several propositions related to main theorems and give \tcr{an} application of \tcr{a special type} of harmonic transplantation.


We fix several notations: 
$B_R$ or $B_R^N$ denotes the $N$-dimensional ball centered $0$ with radius $R$. As a matter of convenience, we set $B_\infty^N = \re^N$ and $\frac{1}{\infty} = 0$.  
$\omega_{N-1}$ denotes the area of the unit sphere $\mathbb{S}^{N-1}$ in $\re^N$. 
$[f > \ep]$ denotes the set $\{ (x,y) \in \re^N_+ \,|\, f(x,y) >\ep \}$. 
$|A|$ denotes the Lebesgue measure of a set $A \subset \re^N$.

%
%

\section{Green's function on the half-space}\label{S Green}

Let $G_{\Omega, a} = G_{\Omega, a} (z): \Omega \setminus \{ a\} \to \re$ be the \tcr{$p$-}Green function 
with singularity at $a \in \Omega$ associated with $p$-Laplace operator $\lap_p (\cdot)= {\rm div }(|\nabla (\cdot ) |^{p-2} \nabla (\cdot ))$.
Namely, $G_{\Omega, a} (z)$ satisfies 
\begin{align}
\label{lap_p}
\begin{cases}
	-\lap_p G_{\Omega, a}(z) = \delta_{a}(z), &\quad z \in \Omega,\\
	\quad \,\,\,\,  G_{\Omega, a} (z) = 0 \,\,&\quad z \in \pd \Omega,
\end{cases}
\end{align}
where $\delta_a$ is the Dirac measure giving unit mass to a point $a \in \Omega$. 
When $\Omega = \re^N_+$ and \tcr{$a= e_N = (0,1)$}, we have
\begin{align}
\label{G half}
	\tcr{G_{\re^N_+, e_N}(x,y)} = 
	\begin{cases}
	\frac{p-1}{N-p} \w_{N-1}^{-\frac{1}{p-1}} \left[ \( |x|^2 +(1-y)^2 \)^{-\frac{N-p}{2(p-1)}} - \psi_p (x, y)  \right] \, &\text{if} \, p \in (1,N), \\
	\w_{N-1}^{-\frac{1}{N-1}} \log \sqrt{ \frac{ |x|^2 +(1+y)^2}{ |x|^2 +(1-y)^2}} &\text{if}\, p=N,
	\end{cases}
\end{align}
where \tcr{$\psi_p$ is a function with} $\psi_p \in L^\infty_{\rm loc} (\re^N_+)$, 
$\lim_{|x|^2 + (y-1)^2 \to 0} \( |x|^2 +(1-y)^2 \)^{\frac{N-1}{2(p-1)}} \nabla \psi_p (x, y) = 0$, 
and $\psi_2 (x,y)= \( |x|^2 +(1+y)^2 \)^{-\frac{N-2}{2}}$ 
(see \cite{KV}). 
\tcr{Note} that $U_p$ \tcr{in \eqref{def U} coincides with} $G_{\re^N_+, e_N}$ for $p=2$ or $p=N$. 
To the best of our knowledge, we do not know the explicit form of $\psi_p$ when $p \not= 2$ and $p \not= N$. 
\tcr{This fact causes some difficulty in the application of harmonic transplantation in \S \ref{S Transformation} on $\re^N_{+}$, 
since we need the explicit form of Green's function in the use of harmonic transplantation.} 
However, fortunately, we see that $U_p$ is a super (or sub) solution of (\ref{lap_p}) in the distributional sense \tcr{according to the range of $p$} as follows. 
This fact enables us to use $U_p$ instead of $G_{\re^N_{+}, e_N}$ in the proof of Theorem \ref{Thm IH}.

%
%
\begin{prop}\label{Prop sol U}
Let $1 < p \le N$ \tcr{and let $U_p$ be as in \eqref{def U}.} 
Then for any $\phi \in C_c^\infty (\re^N_+)$ with $\phi \ge 0$, 
\begin{align}
\label{eq:sol U}
	\int_{\re^N_+} | \nabla U_p|^{p-2} \nabla U_p \cdot \nabla \phi \,dxdy 
	&= \phi (0,1) + \int_{\re^N_+ } (-\lap_p U)\, \phi \, dxdy \\
	&\begin{cases}
	\le \phi (0, 1) \quad &\text{if} \,\, p \in (1, 2], \\
	\ge \phi (0, 1)  &\text{if} \,\, p \in [2, N), \\
	= \phi (0, 1) &\text{if} \,\, p = N.
	\end{cases} \notag
\end{align} 
\end{prop}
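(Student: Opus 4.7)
The plan is to establish (2.4) in two steps: (i) prove the first equality, which encodes the fundamental-solution singularity of $U_p$ at $e_N$ via integration by parts, and (ii) determine the sign of the smooth function $-\lap_p U_p$ on $\re^N_+ \setminus \{e_N\}$. For (i), I would apply the divergence theorem on $\re^N_+ \setminus \ol{B_\ep(e_N)}$ and let $\ep \to 0^+$. Since $\phi \in C_c^\infty(\re^N_+)$, there is no boundary contribution from $\pd \re^N_+$; only the integral over $\pd B_\ep(e_N)$ survives. Writing $r_1 = \sqrt{|x|^2 + (1-y)^2}$, near $e_N$ the function $U_p$ has leading part $c_p r_1^{-\alpha}$ with $\alpha = (N-p)/(p-1)$ for $p < N$ (respectively $-c_N \log r_1$ for $p = N$), which is exactly the fundamental solution of $\lap_p$; the constant $c_p$ is normalized so that the boundary integral over $\pd B_\ep(e_N)$ converges to $\phi(0,1)$ as $\ep \to 0^+$, producing the additive term $\phi(0,1)$.

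For (ii), decompose $U_p = c_p(v - w)$ with $v(x,y) = r_1^{-\alpha}$ and $w(x,y) = r_2^{-\alpha}$, where $r_2 = \sqrt{|x|^2 + (1+y)^2}$; the analogous logarithmic decomposition handles $p = N$. Both $v$ and $w$ are classically $p$-harmonic away from their respective poles $e_N$ and $e_N^* = (0,-1)$, and since $e_N^* \notin \re^N_+$ they are smooth and $p$-harmonic on $\re^N_+ \setminus \{e_N\}$. Using the pointwise identity
\[
\lap_p u = |\nabla u|^{p-2}\left[\lap u + (p-2)\, \frac{\langle D^2 u\, \nabla u,\nabla u\rangle}{|\nabla u|^2}\right]
\]
with $u = v - w$, a direct computation gives
\[
\lap u = \lap v - \lap w = -\alpha\, \tfrac{(N-1)(p-2)}{p-1}\(r_1^{-\alpha-2} - r_2^{-\alpha-2}\),
\]
whose sign is $-\mathrm{sgn}(p-2)$ since $r_1 < r_2$ throughout $\re^N_+$. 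The remaining step is to compute the Hessian correction term and verify that the bracketed quantity above inherits this sign, so that $-\lap_p U_p$ has sign $\mathrm{sgn}(p-2)$ on $\re^N_+ \setminus \{e_N\}$.

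For the two distinguished exponents $p = 2$ and $p = N$, the function $U_p$ already coincides with the $p$-Green function $G_{\re^N_+, e_N}$ (see (2.2)), so $-\lap_p U_p = \delta_{e_N}$ in the distributional sense and equality in (2.4) is automatic. For $p = 2$ this is transparent from linearity of $\lap$; for $p = N$ it can be checked by transporting the radial $N$-Green function on $B_1(0)$ to $\re^N_+$ via a M\"obius transformation, invoking the conformal covariance of $\lap_N$. The main obstacle is the sign analysis in (ii) for general $p \in (1,N)\setminus\{2\}$: the interplay between $\lap u$ and the Hessian correction requires careful algebraic manipulation, which I would attempt by exploiting the axisymmetry of $U_p$ (dependence only on $|x|$ and $y$) to reduce to a two-variable calculation in coordinates well-adapted to the two poles (e.g., bipolar coordinates in $(r_1, r_2)$).
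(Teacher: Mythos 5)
Your outline follows the same broad strategy as the paper: integrate by parts on $\re^N_+ \setminus \overline{B_\ep(e_N)}$, isolate the singular contribution $\phi(0,1)$ from the boundary integral, and determine the sign of $-\lap_p U_p$ away from the pole to conclude. However, your proposal leaves the one step that actually carries the content of the proposition undone, and you say so yourself: ``the remaining step is to compute the Hessian correction term and verify that the bracketed quantity\dots inherits this sign.'' Having the correct sign for $\lap(v-w)$ alone is not decisive — the correction term $(p-2)\langle D^2 u\,\nabla u,\nabla u\rangle/|\nabla u|^2$ has no obvious sign and could in principle overwhelm $\lap u$. The paper's Proposition~2.2 resolves this by an explicit (and lengthy) computation that produces the closed form
\[
-\lap_p U_p = \frac{(N-p)(p-2)}{(p-1)^2\,\w_{N-1}^{2/(p-1)}}\,|\nabla U_p|^{p-4}\,U_p \cdot (\text{manifestly positive factors})\cdot\Bigl[N-p+(N+p-2)\tfrac{(|x|^2+y^2-1)^2}{r_1^2 r_2^2}\Bigr],
\]
from which the sign is immediate. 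Without carrying out that algebra (or a genuine alternative to it), your argument for the inequalities in \eqref{eq:sol U} does not close.

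A second gap: even granting the pointwise sign of $-\lap_p U_p$, you must verify that $\int_{\re^N_+}(-\lap_p U_p)\,\phi\,dxdy$ is finite and that the exhaustion integrals $\int_{\re^N_+\setminus B_\ep(e_N)}(-\lap_p U_p)\,\phi$ converge to it as $\ep\to0$. The paper checks this via the estimate $|\lap_p U_p| = O\bigl(r_1^{-(N-1)(p-2)/(p-1)}\bigr)$ near the pole, noting that the exponent is strictly less than $N$ for all $p\in(1,N]$, so dominated convergence applies. Your outline skips this integrability check entirely; without it the first equality in \eqref{eq:sol U} is not justified, since the right-hand side could a priori fail to make sense. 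The ingredients you would need (the leading behavior of $\nabla U_p$ and $U_p$ near $e_N$, which you already invoke for the boundary term) are sufficient, but the argument must be made. Finally, you should also verify that the ``interior'' term $\int_{B_\ep(e_N)}|\nabla U_p|^{p-2}\nabla U_p\cdot\nabla\phi$ vanishes as $\ep\to0$ — this is a one-line estimate in the paper but it is a required step and is absent from your outline.
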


Proposition \ref{Prop sol U} follows from Proposition \ref{Prop cal U}. 

\begin{prop}
\label{Prop cal U}
\tcr{Let $1 < p \le N$ and let $U_p$ be as in \eqref{def U}.} 
Then for $(x,y) \in \re^N_+ \setminus \{ (0,1) \}$, we have the followings:

(I) Let $1<p<N$. 
Then
\begin{align*}
	-\lap_p U_p 
	&=  \frac{(N-p)(p-2)}{(p-1)^2 \, \w_{N-1}^{\frac{2}{p-1}}} \, |\nabla U_p|^{p-4} U_p 
	\left[ |x|^2 + (y-1)^2 \right]^{-\frac{N-p}{2(p-1)} -1} \left[ |x|^2 + (y+1)^2 \right]^{-\frac{N-p}{2(p-1)} -1} \\
	&\hspace{3em}\times \left[ N-p + (N+p-2) \frac{(|x|^2+ y^2 -1)^2}{\{ |x|^2 + (y-1)^2 \} \{ |x|^2 + (y+1)^2 \} } \right].
\end{align*}

(II)  $-\lap_N U_N = 0$.

\tcr{Especially, we see that the pointwise estimates
$$
	\begin{cases}
	-\Delta_p U_p \le 0 &\quad \text{on} \ \re^N_{+} \setminus \{ e_N \}, \quad (1 < p \le 2), \\
	-\Delta_p U_p \ge 0 &\quad \text{on} \ \re^N_{+} \setminus \{ e_N \}, \quad (2 \le p < N), \\
	-\Delta_p U_p = 0 &\quad \text{on} \ \re^N_{+} \setminus \{ e_N \}, \quad (p = N)
	\end{cases}
$$
hold.}
\end{prop}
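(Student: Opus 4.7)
The plan is to carry out a direct computation that exploits the structure of $U_p$ as a difference of two (rescaled) fundamental solutions of $-\Delta_p$, centered at $e_N$ and $-e_N$ respectively. Setting $z = (x, y)$, $\eta = z - e_N$, $\eta' = z + e_N$, $r_1 = |\eta|$, $r_2 = |\eta'|$, and $\alpha = \frac{N-p}{p-1}$, one has, for $1 < p < N$,
\begin{equation*}
U_p(z) = \frac{\w_{N-1}^{-1/(p-1)}}{\alpha}\bigl(r_1^{-\alpha} - r_2^{-\alpha}\bigr).
\end{equation*}
Each of $F(z) := r_1^{-\alpha}$ and $G(z) := r_2^{-\alpha}$ is individually $p$-harmonic on $\re^N$ away from its pole, i.e.\ $|\nabla F|^2\Delta F + (p-2)\,D^2 F(\nabla F,\nabla F) = 0$ on $\re^N \setminus \{e_N\}$ and the analogue for $G$. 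Since $\Delta_p$ is nonlinear, $\Delta_p(F - G) \ne 0$ for $p \ne 2, N$, so the cross interactions between the two fundamental solutions have to be tracked explicitly.

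First I would compute, using $\eta \cdot \eta' = |z|^2 - 1$,
\begin{equation*}
\nabla U_p = -\w_{N-1}^{-1/(p-1)}\bigl(r_1^{-\alpha-2}\eta - r_2^{-\alpha-2}\eta'\bigr),
\end{equation*}
\begin{equation*}
|\nabla U_p|^2 = \w_{N-1}^{-2/(p-1)}\bigl(r_1^{-2\alpha-2} + r_2^{-2\alpha-2} - 2\,r_1^{-\alpha-2} r_2^{-\alpha-2}(|z|^2 - 1)\bigr).
\end{equation*}
The radial formula $\Delta(r_j^{-\alpha}) = \alpha(\alpha + 2 - N)\,r_j^{-\alpha-2} = -\frac{\alpha(p-2)(N-1)}{p-1}\,r_j^{-\alpha-2}$ then yields $\Delta U_p$ as the difference of two such terms. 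Substituting into the identity
\begin{equation*}
-\Delta_p U_p = -|\nabla U_p|^{p-4}\bigl\{|\nabla U_p|^2\,\Delta U_p + (p-2)\,D^2 U_p(\nabla U_p,\nabla U_p)\bigr\}
\end{equation*}
reduces the problem to evaluating the Hessian term, which I would compute by applying the standard radial identity $D^2 f(|\cdot - a|)(v, v) = f''(|z-a|)(v\cdot\hat\nu)^2 + \frac{f'(|z-a|)}{|z-a|}\bigl(|v|^2 - (v\cdot\hat\nu)^2\bigr)$, with $\hat\nu = (z-a)/|z-a|$, separately to $F$ and $G$; the resulting cross terms feature the angle $\hat\eta \cdot \hat\eta' = (|z|^2 - 1)/(r_1 r_2)$.

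The main obstacle is the ensuing algebraic simplification. The expansion of $D^2 U_p(\nabla U_p, \nabla U_p)$ produces nine bilinear pieces mixing $\nabla F$, $\nabla G$, $D^2 F$, and $D^2 G$; one must verify that after the cancellations coming from the $p$-harmonicity of $F$ and $G$ individually, the surviving cross terms collapse into the compact form $|\nabla U_p|^{p-4} U_p\, r_1^{-\alpha-2} r_2^{-\alpha-2}\bigl[(N-p) + (N+p-2)(|z|^2 - 1)^2/(r_1 r_2)^2\bigr]$ with prefactor $(N-p)(p-2)/\bigl[(p-1)^2\w_{N-1}^{2/(p-1)}\bigr]$. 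The key algebraic facts are the squared-cosine identity $(|z|^2-1)^2/(r_1 r_2)^2 = (\hat\eta \cdot \hat\eta')^2$, the relations $\alpha + 1 = (N-1)/(p-1)$ and $\alpha + 2 = (N+p-2)/(p-1)$, and that the factor $U_p$ itself re-emerges from collecting the residual $r_1^{-\alpha} - r_2^{-\alpha}$ combination.

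For Part (II), the cleanest route is to identify $U_N$ with the classical $N$-harmonic Green's function on $\re^N_+$ with pole at $e_N$, constructed by reflection: since $-e_N \notin \re^N_+$ and $\log$ is the fundamental solution of $\Delta_N$, one obtains $\Delta_N U_N = 0$ on $\re^N_+ \setminus \{e_N\}$ directly (this is also the formal $\alpha \to 0$ limit of (I), where the prefactor $(N - p)$ annihilates the right-hand side). The pointwise sign statement is then read off from (I) and (II): $y > 0$ gives $r_1 < r_2$ and hence $U_p > 0$ on $\re^N_+ \setminus \{e_N\}$; the bracketed factor is strictly positive for $1 < p < N$; $|\nabla U_p|^{p-4} > 0$; so the sign of $-\Delta_p U_p$ equals the sign of $p - 2$, yielding the three cases asserted.
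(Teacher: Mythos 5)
Your plan is correct and, for Part (I), follows essentially the same route as the paper: a direct computation of $\Delta_p U_p$ from the identity $\Delta_p u = |\nabla u|^{p-4}\bigl\{|\nabla u|^2\Delta u + (p-2)\,D^2 u(\nabla u,\nabla u)\bigr\}$, which the paper writes with the Hessian term in the equivalent form $\tfrac12\nabla(|\nabla u|^2)\cdot\nabla u$ and then expands by brute force; your idea of killing the two self-interaction terms via the $p$-harmonicity of each fundamental-solution piece is a sensible organizational shortcut but does not change the nature of the argument, and the cross-term simplification (the actual content of the computation) is left unverified in your outline. For Part (II), the step asserting that the reflected function is automatically $N$-harmonic ``because $\log$ is the fundamental solution of $\Delta_N$'' needs care, since $\Delta_N$ is nonlinear and $\Delta_N(F-G)\neq\Delta_N F-\Delta_N G$; the honest justifications are either the conformal invariance of $\Delta_N$ (not elementary) or the direct calculation the paper sketches and omits, and your formal $\alpha\to 0$ limit of (I) is a consistency check rather than a standalone proof.
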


\begin{proof}(Proof of Proposition \ref{Prop cal U})

\noindent
(I) For $(x,y) \in \re^N_+ \setminus \{ (0,1) \}$, we have
\begin{align*}
	\w_{N-1}^{\frac{1}{p-1}} \nabla U_p = -\left[ |x|^2 + (y-1)^2 \right]^{-\frac{N-p}{2(p-1)}-1} 
	\begin{pmatrix}
	x \\
	y-1
	\end{pmatrix}
	+ \left[ |x|^2 + (y+1)^2 \right]^{-\frac{N-p}{2(p-1)} -1} 
	\begin{pmatrix}
	x \\
	y+1
	\end{pmatrix}
\end{align*}
which implies that
\begin{align*}
	|\nabla U_p|^2 
	&= \w_{N-1}^{-\frac{2}{p-1}} \Biggl[ \left\{|x|^2 + (y-1)^2 \right\}^{-\frac{N-p}{p-1}-1} 
	+ \left\{|x|^2 + (y+1)^2 \right\}^{-\frac{N-p}{p-1}-1} \\
	&\hspace{1em}- 2 \left\{|x|^2 + (y-1)^2 \right\}^{-\frac{N-p}{2(p-1)}-1} \left\{|x|^2 + (y+1)^2 \right\}^{-\frac{N-p}{2(p-1)}-1} 
	\{ |x|^2 +y^2 -1 \} \Biggl].
\end{align*}
\tcr{We put $V = |\nabla U_p|^2$.}
Then we have
\begin{align*}
	&{\rm div}(|\nabla U_p |^{p-2} \nabla U_p) 
	= {\rm div}(V^{\frac{p-2}{2}} \nabla U_p)
	= V^{\frac{p-4}{2}} \left[ V \lap U_p + \frac{p-2}{2} \nabla V \cdot \nabla U_p  \right], \\
	&\lap U_p = {\rm div} (\nabla U_p) 
	= -\frac{(N-1)(p-2)}{(p-1)\, \w_{N-1}^{\frac{1}{p-1}}} 
	\left[ \left\{|x|^2 + (y-1)^2 \right\}^{-\frac{N-p}{2(p-1)}-1} - \left\{|x|^2 + (y+1)^2 \right\}^{-\frac{N-p}{2(p-1)}-1}  \right].
\end{align*}
Also, we have
\begin{align*}
	&\w_{N-1}^{\frac{2}{p-1}} \nabla V \\ 
	&=-2 \,\frac{N-1}{p-1} \left\{ |x|^2 + (y-1)^2 \right\}^{-\frac{N-p}{p-1}-2}
	\begin{pmatrix}
	x \\
	y-1
	\end{pmatrix}
	- 2 \, \frac{N-1}{p-1} \left\{|x|^2 + (y+1)^2 \right\}^{-\frac{N-p}{p-1}-2}
	\begin{pmatrix}
	x \\
	y+1
	\end{pmatrix} \\
	&-4 \left\{|x|^2 + (y-1)^2 \right\}^{-\frac{N-p}{2(p-1)}-1} \left\{|x|^2 + (y+1)^2 \right\}^{-\frac{N-p}{2(p-1)}-1}
	\begin{pmatrix}
	x \\
	y
	\end{pmatrix} \\
	&+2 \, \( \frac{N-p}{p-1}+2 \)  \left\{|x|^2 + (y-1)^2 \right\}^{-\frac{N-p}{2(p-1)}-2} 
	\left\{|x|^2 + (y+1)^2 \right\}^{-\frac{N-p}{2(p-1)}-1} (|x|^2 +y^2 -1)
	\begin{pmatrix}
	x \\
	y-1
	\end{pmatrix} \\
	&+2 \, \( \frac{N-p}{p-1}+2 \) \left\{  |x|^2 + (y-1)^2 \right\}^{-\frac{N-p}{2(p-1)}-1} 
	\left\{|x|^2 + (y+1)^2 \right\}^{-\frac{N-p}{2(p-1)}-2} (|x|^2 +y^2-1)
	\begin{pmatrix}
	x \\
	y+1
	\end{pmatrix}
\end{align*}
which implies that
\begin{align*}
	&\frac{p-2}{2} \nabla V \cdot \nabla U_p \\
	&= \frac{p-2}{2} \w_{N-1}^{-\frac{3}{p-1}} \Biggl[\left[ |x|^2 + (y-1)^2 \right]^{-\frac{N-p}{2(p-1)}-1}
	\begin{pmatrix}
	x \\
	y-1
	\end{pmatrix}
	- \left[ |x|^2 + (y+1)^2 \right]^{-\frac{N-p}{2(p-1)} -1}
	\begin{pmatrix}
	x \\
	y+1
	\end{pmatrix} \Biggl] \cdot \\
	&\Biggl[2 \,\frac{N-1}{p-1} \left\{  |x|^2 + (y-1)^2 \right\}^{-\frac{N-p}{p-1} -2}
	\begin{pmatrix}
	x \\
	y-1
	\end{pmatrix} 
	+ 2 \, \frac{N-1}{p-1} \left\{|x|^2 + (y+1)^2 \right\}^{-\frac{N-p}{p-1}-2}
	\begin{pmatrix}
	x \\
	y+1
	\end{pmatrix} \\
	&+4 \left\{|x|^2 + (y-1)^2 \right\}^{-\frac{N-p}{2(p-1)}-1} \left\{ x|^2 + (y+1)^2 \right\}^{-\frac{N-p}{2(p-1)}-1}
	\begin{pmatrix}
	x \\
	y
	\end{pmatrix} \\
	&-2 \, \( \frac{N-p}{p-1} +2 \)  \left\{|x|^2 + (y-1)^2 \right\}^{-\frac{N-p}{2(p-1)}-2} 
	\left\{  |x|^2 + (y+1)^2 \right\}^{-\frac{N-p}{2(p-1)} -1} (|x|^2 +y^2 -1)
	\begin{pmatrix}
	x \\
	y-1
	\end{pmatrix} \\
	&-2 \, \( \frac{N-p}{p-1} +2 \) \left\{|x|^2 + (y-1)^2 \right\}^{-\frac{N-p}{2(p-1)}-1} 
	\left\{|x|^2 + (y+1)^2 \right\}^{-\frac{N-p}{2(p-1)}-2} (|x|^2 +y^2 -1)
	\begin{pmatrix}
	x \\
	y+1
	\end{pmatrix} \Biggl].
\end{align*}
Therefore, we have
\begin{align*}
	&\frac{p-2}{2} \nabla V \cdot \nabla U_p \\
	&=\frac{p-2}{2} \w_{N-1}^{-\frac{3}{p-1}} \Biggl[ 2 \,\frac{N-1}{p-1}
	\left[ |x|^2 + (y-1)^2 \right]^{-\frac{3(N-p)}{2(p-1)} -2} \\
	&+2 \,\frac{N-1}{p-1}
	\left[ |x|^2 + (y-1)^2 \right]^{-\frac{N-p}{2(p-1)} -1} \left[ |x|^2 + (y+1)^2 \right]^{-\frac{N-p}{p-1} -2} (|x|^2 +y^2 -1) \\
	&+4 \left[ |x|^2 + (y-1)^2 \right]^{-\frac{N-p}{p-1} -2} \left[ |x|^2 + (y+1)^2 \right]^{-\frac{N-p}{2(p-1)} -1} (|x|^2 +y^2 -y) \\
	&-2 \, \( \frac{N-1}{p-1} +1 \)  \left\{  |x|^2 + (y-1)^2 \right\}^{-\frac{N-p}{p-1} -2} \left\{  |x|^2 + (y+1)^2 \right\}^{-\frac{N-p}{2(p-1)} -1} (|x|^2 +y^2 -1) \\
	&-2 \, \( \frac{N-1}{p-1} +1 \)  \left\{  |x|^2 + (y-1)^2 \right\}^{-\frac{N-p}{p-1} -2} \left\{  |x|^2 + (y+1)^2 \right\}^{-\frac{N-p}{2(p-1)} -2} (|x|^2 +y^2 -1)^2 \\
	&-2 \,  \frac{N-1}{p-1} \left\{  |x|^2 + (y-1)^2 \right\}^{-\frac{N-p}{p-1} -2} \left\{  |x|^2 + (y+1)^2 \right\}^{-\frac{N-p}{2(p-1)} -1} (|x|^2 +y^2 -1) \\
	&-2 \,\frac{N-1}{p-1}
	\left[ |x|^2 + (y+1)^2 \right]^{-\frac{3(N-p)}{2(p-1)} -2}
	+4 \left[ |x|^2 + (y-1)^2 \right]^{-\frac{N-p}{2(p-1)} -1} \left[ |x|^2 + (y+1)^2 \right]^{-\frac{N-p}{2(p-1)} -2} (|x|^2 +y^2 +y) \\
	&+2 \, \( \frac{N-1}{p-1} +1 \)  \left\{  |x|^2 + (y-1)^2 \right\}^{-\frac{N-p}{2(p-1)} -2} \left\{  |x|^2 + (y+1)^2 \right\}^{-\frac{N-p}{p-1} -2} (|x|^2 +y^2 -1)^2 \\
	&+2 \, \( \frac{N-1}{p-1} +1 \)  \left\{  |x|^2 + (y-1)^2 \right\}^{-\frac{N-p}{2(p-1)} -1} \left\{  |x|^2 + (y+1)^2 \right\}^{-\frac{N-p}{p-1} -2} (|x|^2 +y^2 -1)
	\Biggl].
\end{align*}
Since
\begin{align*}
	V \lap U_p = - \w_{N-1}^{-\frac{3}{p-1}} \frac{p-2}{2} \,
	&\Biggl[ 2 \,\frac{N-1}{p-1} \left\{  |x|^2 + (y-1)^2 \right\}^{-\frac{3(N-p)}{2(p-1)}-2} \\
	&- 2 \,\frac{N-1}{p-1} \left\{|x|^2 + (y-1)^2 \right\}^{-\frac{N-p}{p-1}-1} \left\{|x|^2 + (y+1)^2 \right\}^{-\frac{N-p}{2(p-1)}-1} \\
	&+ 2 \,\frac{N-1}{p-1} \left\{|x|^2 + (y-1)^2 \right\}^{-\frac{N-p}{2(p-1)}-1} \left\{|x|^2 + (y+1)^2 \right\}^{-\frac{N-p}{p-1}-1} \\
	&- 2 \,\frac{N-1}{p-1} \left\{|x|^2 + (y+1)^2 \right\}^{-\frac{3(N-p)}{2(p-1)}-2} \\
	&-4 \,\frac{N-1}{p-1} \left\{|x|^2 + (y-1)^2 \right\}^{-\frac{N-p}{p-1} -2} 
	\left\{ x|^2 + (y+1)^2 \right\}^{-\frac{N-p}{2(p-1)}-1} (|x|^2 +y^2 -1) \\
	&+4 \,\frac{N-1}{p-1} \left\{  |x|^2 + (y-1)^2 \right\}^{-\frac{N-p}{2(p-1)}-1} 
	\left\{|x|^2 + (y+1)^2 \right\}^{-\frac{N-p}{p-1}-2} (|x|^2 + y^2 - 1) \Biggl],
\end{align*}
we have
\begin{align*}
	&{\rm div}(|\nabla U_p |^{p-2} \nabla U_p) V^{-\frac{p-4}{2}} \w_{N-1}^{\frac{3}{p-1}} \frac{2}{p-2}
	= \left[ V \lap U_p + \frac{p-2}{2} \nabla V \cdot \nabla U_p  \right] \w_{N-1}^{\frac{3}{p-1}} \frac{2}{p-2} \\
	&= -2 \,\frac{N-1}{p-1} \left\{|x|^2 + (y-1)^2 \right\}^{-\frac{N-p}{2(p-1)}-1} 
	\left\{|x|^2 + (y+1)^2 \right\}^{-\frac{N-p}{2(p-1)}-1} U_p \frac{N-p}{p-1} \, \w_{N-1}^{\frac{1}{p-1}} \\
	&+2 \left\{|x|^2 + (y-1)^2 \right\}^{-\frac{N-p}{2(p-1)}-1} \left\{|x|^2 + (y+1)^2 \right\}^{-\frac{N-p}{2(p-1)}-1} 
	\Biggl[ \left\{|x|^2 + (y+1)^2 \right\}^{-\frac{N-p}{2(p-1)}-1} (|x|^2 +y^2 -1) \\
	&- \left\{|x|^2 + (y-1)^2 \right\}^{-\frac{N-p}{2(p-1)}-1} (|x|^2 +y^2 -1)
	+2 \left\{|x|^2 + (y-1)^2 \right\}^{-\frac{N-p}{2(p-1)}-1} (|x|^2 +y^2 -y) \\
	&- 2 \left\{|x|^2 + (y+1)^2 \right\}^{-\frac{N-p}{2(p-1)}-1} (|x|^2 +y^2 +y)
	\Biggl] \\
	&- 2 \( \frac{N-1}{p-1} +1 \) \left\{  |x|^2 + (y-1)^2 \right\}^{-\frac{N-p}{2(p-1)} -2} 
	\left\{  |x|^2 + (y+1)^2 \right\}^{-\frac{N-p}{2(p-1)} -2} (|x|^2 +y^2 -1)^2 U_p\\
	&=- \frac{2(N-p) \, \w_{N-1}^{\frac{1}{p-1}}}{(p-1)^2} U_p 
	\left\{|x|^2 + (y-1)^2 \right\}^{-\frac{N-p}{2(p-1)} -1} \left\{  |x|^2 + (y+1)^2 \right\}^{-\frac{N-p}{2(p-1)} -1} \\
	&\left[ N-p + (N+p-2) \frac{(|x|^2+ y^2 -1)^2}{\{ |x|^2 + (y-1)^2 \} \{ |x|^2 + (y+1)^2 \} } \right]
\end{align*}
\tcr{which implies Proposition \ref{Prop cal U} (I).}


\noindent
(II) The proof is done by direct calculation in the same way as (I). We omit the proof here.

\end{proof}


\begin{proof}(Proof of Proposition \ref{Prop sol U})
\tcr{Let $B_\ep(e_N)$ be the ball with center $e_N$ and radius $\ep$.}
For any $\phi \in C_c^{\infty} (\re^N_+)$, we have
\begin{align}
	&\int_{\re^N_+} | \nabla U_p |^{p-2} \nabla U_p \cdot \nabla \phi \, dxdy \notag \\
\label{PU0}
	&= \int_{\pd B_\ep(e_N)} | \nabla U_p |^{p-2} ( \nabla U_p \cdot \nu )\,\phi \, dS
	+ \int_{\re^N_+ \setminus B_\ep(e_N)} (-\lap_p U_p)\, \phi \, dxdy 
	+ \int_{B_\ep(e_N)} | \nabla U_p |^{p-2} \nabla U_p \cdot \nabla \phi \, dxdy.
\end{align}
where $\nu = -(x, y-1)^{T} \left\{ |x|^2 + (y-1)^2 \right\}^{-\frac{1}{2}}$. 
\tcr{We claim}
\begin{align}\label{1st term}
 \int_{\pd B_\ep(e_N)} | \nabla U_p |^{p-2} ( \nabla U_p \cdot \nu )\,\phi \, dS
&= \phi (0,1) + o(1)\quad (\ep \to 0), \\
\label{3rd term}
\int_{B_\ep(e_N)} | \nabla U_p |^{p-2} \nabla U_p \cdot \nabla \phi \, dxdy
&= o(1) \quad (\ep \to 0),
\end{align}
Indeed, a direct calculation shows that
\begin{align*}
	&\int_{\pd B_\ep(e_N)} | \nabla U_p |^{p-2} ( \nabla U_p \cdot \nu )\,\phi \, dS\\
	&= \w_{N-1}^{-1} \int_{\pd B_\ep} \Biggl[ \left\{|x|^2 + (y-1)^2 \right\}^{-\frac{N-p}{p-1}-1} 
	+ \left\{|x|^2 + (y+1)^2 \right\}^{-\frac{N-p}{p-1}-1} 
	- 2 \left\{  |x|^2 + (y-1)^2 \right\}^{-\frac{N-p}{2(p-1)}-1} \\
	&\left\{|x|^2 + (y+1)^2 \right\}^{-\frac{N-p}{2(p-1)}-1} \{ |x|^2 +y^2 -1 \}  \Biggl]^{\frac{p-2}{2}} 
	\Biggl[ \left\{|x|^2 + (y-1)^2 \right\}^{-\frac{N-p}{2(p-1)} } - \left\{|x|^2 + (y+1)^2 \right\}^{-\frac{N-p}{2(p-1)}-1} \\
	&\{ |x|^2 +y^2 -1 \} \Biggl] \left\{ |x|^2 + (y-1)^2 \right\}^{-\frac{1}{2}} \,\phi \, dS\\
	&= \w_{N-1}^{-1} \ep^{-1-\frac{N-p + (N-1)(p-2)}{p-1}} 
	\int_{\pd B_\ep(e_N)} \Biggl[1 + \left\{  \frac{\ep^2}{|x|^2 + (y+1)^2} \right\}^{\frac{N-1}{p-1}} 
	- 2 \ep^{-\frac{N-1}{p-1}-1} \left\{ |x|^2 + (y+1)^2 \right\}^{-\frac{N-p}{2(p-1)}-1}\\
	&\{ |x|^2 +y^2 -1 \}  \Biggl]^{\frac{p-2}{2}} 
	\Biggl[1 - \left\{ \frac{\ep^2}{|x|^2 + (y+1)^2} \right\}^{\frac{N-p}{2(p-1)}} \frac{|x|^2 +y^2 -1}{|x|^2 +(y+1)^2}  \Biggl]  \,\phi \, dS\\
	&= \phi (0,1) + o(1) \quad (\ep \to 0)
\end{align*}
which implies (\ref{1st term}). 

On the other hand, we \tcr{see} 
\begin{align*}
	\left| \, \int_{B_\ep(e_N)} | \nabla U_p |^{p-2} \nabla U_p \cdot \nabla \phi \, dxdy\, \right|
	&\le C \, \| \nabla \phi \|_\infty \int_{B_\ep(e_N)} \left\{ |x|^2 + (y-1)^2 \right\}^{-\frac{N-1}{2}} \,dxdy \\
	&= C \, \| \nabla \phi \|_\infty \, \w_{N-1} \int_0^\ep \, dr = o(1) \quad (\ep \to 0),
\end{align*}
\tcr{which proves \eqref{3rd term}.} 

\tcr{
Finally, we check that the second term in \eqref{PU0} satisfies
\begin{equation}
\label{2nd term}
	\int_{\re^N_+ \setminus B_\ep(e_N)} (-\lap_p U_p)\, \phi \, dxdy \to \int_{\re^N_+ } (-\lap_p U_p)\, \phi \, dxdy
\end{equation}
as $\ep \to 0$.
Actually, we have 
\begin{align*}
	|\Delta_p U_p| &= O\( |\nabla U_p|^{p-4} U_p (|x|^2 + (y-1)^2 )^{-\frac{N-p}{2(p-1)}-1} \), \\ 
	|\nabla U_p|^{p-4} &= O\( (|x|^2 + (y-1)^2 )^{(-\frac{N-p}{p-1}-1)(\frac{p-4}{2})} \), \\ 
	|U_p| &= O\( (|x|^2 + (y-1)^2 )^{-\frac{N-p}{2(p-1)}} \)
\end{align*}
near $(x, y) = (0, 1)$ by Proposition \ref{Prop cal U}.
Thus we have
\begin{align*}
	|\Delta_p U_p| &= O\( (|x|^2 + (y-1)^2 )^{(-\frac{N-p}{p-1}-1)(\frac{p-4}{2}) - \frac{N-p}{2(p-1)} - \frac{N-p}{2(p-1)}-1} \) \\
	&= O\( \(\sqrt{|x|^2 + (y-1)^2}\)^{(-\frac{N-p}{p-1}-1)(p-4) - \frac{N-p}{(p-1)} - \frac{N-p}{(p-1)}-2} \) \\
	&= O\( \(\sqrt{|x|^2 + (y-1)^2}\)^{-\frac{(N-1)(p-2)}{p-1}} \) 
\end{align*}
near $(x, y) = (0, 1)$.
This is locally integrable if
$$
	\frac{(N-1)(p-2)}{p-1} < N
$$
which always holds for $p \in (1, N]$.
Thus \eqref{2nd term} follows from Lebesgue's dominated convergence theorem.
}

\tcr{Returning to \eqref{PU0} with \eqref{1st term}, \eqref{3rd term}, and \eqref{2nd term},
we obtain \eqref{eq:sol U}.}
\end{proof}

%
%

\section{M\"obius transformation and harmonic transplantation}\label{S Transformation}

In this section, we recall M\"obius transformation and harmonic transplantation. 
Both transformations preserve the norm $\| \nabla (\cdot) \|_p$ and coincide in the critical case $p=N$. 
However in the subcritical case $p<N$, these transformations are different \tcr{from} each other. 


\subsection{M\"obius transformation}\label{S Mobius}

First, we recall \tcr{the definition of} M\"obius transformation and its properties. 

\begin{definition}\label{def M}(M\"obius transformation)
For $b \in \re^N$, $\la >0$, $R \in O(N)$, where $O(N)$ is the orthogonal group in \tcr{$\re^N$}, set
\begin{align*}
T_b (z) &= z+b\quad (\text{translation}),\\
S_\la (z) &= \la z \quad (\text{scaling}),\\
R(z) &= Rz \quad (\text{rotation}),\\
J(z) &= z^* = \frac{z}{|z|^2} \quad (\text{reflection}).
\end{align*}
A {\it M\"obius transformation} $M: \re^N \to \re^N$ is a finite composition of $T_b, S_\la, R$ and $J$. 
Also, the group of M\"obius transformations is denoted by $M(\re^N)$. 
\end{definition}

\begin{remark}\label{Jacobian}
Set $u \otimes v = (u_i v_j)_{1 \le i, j \le N}$ for $u=(u_1, \cdots, u_N)^T$ and $v=(v_1, \cdots, v_N)^T$. 
The \tcr{differential} and the Jacobian of each transformation are as follows.
\begin{align*}
(T_b)' (z) &= I, \,\,\det (T_b)'(z) =1\\
(S_\la)' (z) &= \la I, \,\, \det (S_\la)'(z) =\la^N\\
R'(z) &= R, \,\, \det R' (z) = \det R = \pm 1\\
J'(z) &= \frac{1}{|z|^2}\( I - 2 \frac{z}{|z|} \otimes \frac{z}{|z|}  \), \,\, \det J'(z) = \frac{(-1)}{|z|^{2N}}
\end{align*}
\tcr{where $I$ is the identity matrix on $\re^N$.}
For the proof of the last one, see Proposition \ref{Prop detJ}.
\end{remark}

\tcr{For a function $f: \re^N \to \re$, we set}
\begin{align}\label{M trans}
(M^{\#} f ) (z) = |\det M' (z)|^{\frac{N-p}{Np}} f(M(z))
\end{align}
for $z \in \re^N$. 
\tcr{We call $M^{\sharp} f$ is also the M\"obius transformation of the function $f$.}
Then we see that the transformation $M^{\#}$ preserves several quantities as follows.

\begin{prop}\label{Prop Mobius}
Let $1\le p \le N$ and $p^* = \frac{Np}{N-p}$ for $p < N$. 
If $M \in M(\re^N)$, then 
\begin{align}\label{nabla M}
\int_{\re^N} |\nabla (M^{\#} f) (z) |^p\,dz &= \int_{\re^N} |\nabla f (w)|^p \,dw \quad \text{for} \,\, p=2 \,\text{or} \,N,\\
\label{p^* M}
\int_{\re^N} |(M^{\#} f) (z) |^{p^*}\,dz &= \int_{\re^N} |f (w)|^{p^*} \,dw \quad \text{for} \,\, p <N, \\
\label{Hardy M}
\int_{\re^N} \frac{|(M^{\#} f) (z) |^{p}}{|z|^p}\,dz &= \int_{\re^N} \frac{|f (w)|^{p}}{|w|^p} \,dw 
\end{align}
hold for any $f \in C_c^1 (\re^N \setminus \{ 0 \})$. 
\end{prop}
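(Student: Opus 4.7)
The plan is to decompose any $M \in M(\re^N)$ into its defining generators $T_b, S_\la, R, J$ and verify each of the three identities one generator at a time. The reduction is legitimate because the Jacobian factorizes under composition via $\det (M_2 \circ M_1)'(z) = \det M_2'(M_1(z)) \cdot \det M_1'(z)$, which gives the cocycle identity $(M_2 \circ M_1)^{\#} = M_1^{\#} \circ M_2^{\#}$; invariance under each generator therefore propagates to the whole group.

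For the three easy generators $T_b, S_\la, R$, the verification is immediate from the change of variables $w = M(z)$ together with the explicit Jacobians in Remark \ref{Jacobian}: translations and rotations have Jacobian of modulus $1$, while for scaling the exponent $\frac{N-p}{Np}$ in \eqref{M trans} is tailored so that $|\det S_\la'|^{\frac{N-p}{Np}} = \la^{\frac{N-p}{p}}$ exactly absorbs the $\la$-powers produced by $dz$, $|\nabla(\cdot)|^p$, and $|z|^{-p}$ in each of the three integrals. I note in passing that \eqref{Hardy M} genuinely fails for $T_b$ when $b \neq 0$, since $|z + b| \neq |z|$ and the weight is not translation-invariant; thus \eqref{Hardy M} must be read as restricted to the subgroup generated by $S_\la, R, J$, i.e.\ the M\"obius transformations fixing the set $\{0, \infty\}$.

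The substantive case is inversion $J(z) = z/|z|^2$, where $(J^{\#}f)(z) = |z|^{-\frac{2(N-p)}{p}} f(z/|z|^2)$. The key input is the conformal identity $(J'(z))^{T} J'(z) = |z|^{-4} I$, obtained by expanding $(I - 2\hat{z} \otimes \hat{z})^{2} = I$ with $\hat{z} = z/|z|$. For \eqref{p^* M} and \eqref{Hardy M} the proof is just bookkeeping with $|J(z)| = 1/|z|$ and $|\det J'(z)| = |z|^{-2N}$: the prefactor $|z|^{-\frac{2(N-p)}{p} \cdot p^{*}} = |z|^{-2N}$ exactly matches the Jacobian, so the substitution $w = J(z)$ closes each identity. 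For \eqref{nabla M} with $p = N$, the weight trivializes, $J^{\#}f = f \circ J$, and the conformal identity yields $|\nabla(f \circ J)(z)|^{N} = |z|^{-2N} |\nabla f(J(z))|^{N}$, again matching the Jacobian. The main technical point is \eqref{nabla M} for $J$ in the case $p = 2$: here the weight $|z|^{-(N-2)}$ survives, and the chain rule produces additional cross terms. I expect to have to expand $|\nabla (J^{\#}f)(z)|^{2}$ in full, show the cross terms assemble into a total divergence, and discard it upon integration using the compact support condition $f \in C_c^{1}(\re^N \setminus \{0\})$ (this is essentially the classical invariance of the Dirichlet energy under Kelvin transform). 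This divergence-cancellation step is the only place where a genuine computation (rather than routine bookkeeping) is required, and is the step I anticipate as the main obstacle.
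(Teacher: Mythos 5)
Your approach matches the paper's: decompose $M$ into the generators $T_b, S_\lambda, R, J$, propagate via the cocycle identity $(M_2 \circ M_1)^{\#} = M_1^{\#} \circ M_2^{\#}$, and concentrate the work on $J$ with $p=2$ as the only case needing a genuine computation. The cosmetic difference is that you work in Cartesian coordinates via the conformal identity $(J')^T J' = |z|^{-4} I$, whereas the paper writes $J^{\#}f$ in polar coordinates and integrates by parts in the radial variable; both close the same way. Your anticipated divergence cancellation for $p=2$ does indeed work: after the substitution $w = J(z)$, the cross term reduces to $(N-2)\int |w|^{-2}\, w\cdot \nabla(f^2)\,dw$, and integrating by parts against $\nabla\cdot(w/|w|^2) = (N-2)|w|^{-2}$ yields $-(N-2)^2 \int |w|^{-2} f^2\,dw$, which exactly cancels the square term $(N-2)^2 \int |w|^{-2} f^2\,dw$, leaving $\int |\nabla f|^2$.

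Your observation about \eqref{Hardy M} is a correct catch of a real error in the proposition as stated: for $M = T_b$ with $b\neq 0$, $\int |z|^{-p}|f(z+b)|^p\,dz = \int |w-b|^{-p}|f(w)|^p\,dw \neq \int |w|^{-p}|f(w)|^p\,dw$ in general, so \eqref{Hardy M} cannot hold for all of $M(\re^N)$ and all $f\in C^1_c(\re^N\setminus\{0\})$, despite the paper's unproved assertion that it is ``easy to show.'' As you note, the identity holds only on the subgroup generated by $S_\lambda, R, J$ (M\"obius maps fixing $\{0,\infty\}$). Since the paper uses only \eqref{nabla M} in its applications (e.g.\ the proof of Theorem \ref{Thm CH} via the Cayley map), the main results survive, but the hypotheses of Proposition \ref{Prop Mobius} need to be restricted accordingly.
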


\begin{remark}\label{Kelvin}
From Remark \ref{Jacobian}, we have 
\begin{align*}
(T_b^{\#} \,f) (z) &= f(z+b),\\
(S_\la^{\#}\, f) (z) &= \la^{\frac{N-p}{p}} f(\la z),\\
(R^{\#} f) (z) &= f(Rz),\\
(J^{\#} f )(z) &= |z|^{\frac{2}{p} (p-N)} f \( \frac{z}{|z|^2} \).
\end{align*}
The last transformation is called \tcr{the} Kelvin transformation when $p=2$ or $N$. 
In the case where $p \not=2$ and $p \not= N$, there is no radial function $\rho= \rho (|z|) \not\equiv 0$ such that 
\begin{align*}
\int_{\re^N} |\nabla g(z) |^p\,dz = \int_{\re^N} |\nabla f (w)|^p \,dw \,\,\text{holds for}\,\, g(z) = \rho (|z|) f \( \frac{z}{|z|^2} \),
\end{align*}
see the proof below. \tcb{Furthermore, all transformations except for $J^{\#}$ above preserve the $p$-harmonicity of functions: $\Delta_p f = 0$ implies $\Delta_p (M^{\#} f) = 0$, where $M$ is one of $T_b$, $S_{\la}$, and $R$. Also $J^{\#}$ preserves the $p-$harmonicity of functions when $p=2$ or $p=N$. }
\tcr{When $p \ne 2$ and $p \ne N$, it is shown in \cite{Lind} that there is no radial function $\rho$ such that $\Delta_p g = 0$, $g$ as above, for any function $f$ satisfying $\Delta_p f = 0$.}
\end{remark}

\begin{proof} \tcr{(Proof of Proposition \ref{Prop Mobius})}
We can easily show (\ref{p^* M}) and (\ref{Hardy M}). We show (\ref{nabla M}) only.  
First, we claim that \tcr{\eqref{nabla M} holds for each transformation $T_b, S_\la, R, J$.}
We shall show (\ref{nabla M}) only \tcr{for} $M=J$. 
We use the polar coordinate $z= r\w, r=|z|, \w \in \mathbb{S}^{N-1}$. 
Then we have $w := Jz = s \omega$, $|w| = s = r^{-1}$, and
\begin{align*}
(J^{\#}f )(r\w) =\rho (r) f (s \w), \quad \text{where} 
\,\, \rho (r)= r^{\frac{2}{p} (p-N)}.
\end{align*}
Therefore, we have
\begin{align*}
&\int_{\re^N} |\nabla (J^{\#} f) (z) |^p\,dz \\
&= \int_0^\infty \int_{\mathbb{S}^{N-1} } \left[ \left| \,\frac{\pd (J^{\#}f )}{\pd r} \,\right|^2  + \frac{1}{r^2} |\nabla_{\mathbb{S}^{N-1}} (J^{\#}f) |^2  \right]^{\frac{p}{2}} r^{N-1} \,dr dS_{\w} \\
&= \iint \left[ \left| -\frac{\pd f}{\pd s} \rho (r) r^{-2} + \rho' (r) f \,\right|^2  + \tcr{\rho (r)^2} r^{-2} |\nabla_{\mathbb{S}^{N-1}} f |^2  \right]^{\frac{p}{2}} r^{N-1} \,dr dS_{\w} \\
&= \iint \left[ \left| \frac{\pd f}{\pd s} - \frac{\rho' (r) r^2}{\rho (r)} f \,\right|^2  + r^2 |\nabla_{\mathbb{S}^{N-1}} f |^2  \right]^{\frac{p}{2}} \rho(r)^{p} r^{-2p + N-1} \,dr dS_{\w} \\
&= \iint \left[ \left| \,\frac{\pd f}{\pd s} \, \right|^2 -\frac{\rho'(r)}{\rho (r) s^2}  \frac{\pd}{\pd s} (f^2) + \frac{|\rho'(r)|^2}{\rho (r)^2 s^4}  f^2  + \frac{1}{s^2} |\nabla_{\mathbb{S}^{N-1}} f |^2  \right]^{\frac{p}{2}} \rho(r)^p r^{2(N-p)} s^{N-1}\,ds dS_{\w}
\end{align*}
\tcr{where $r = s^{-1}$.}
Since $\rho (r)^p r^{2(N-p)} =1$ and $\rho (r) =1$ for $p=N$, we obtain (\ref{nabla M}) for $p=N$. 
In the case where $p=2$, by the integration by parts, we have
\begin{align*}
\int_{\re^N} |\nabla (J^{\#} f) (z) |^2\,dz 
&= \int_{\re^N} |\nabla f (w) |^2 \,dw + \tcr{(N-2)^2} \iint \( \frac{\pd}{\pd s} \( \frac{1}{s} \) f^2  + \frac{1}{s^2}  f^2 \) s^{N-1}\,ds dS_{\w}\\
&= \int_{\re^N} |\nabla f (w) |^2 \,dw.
\end{align*}
Therefore, we obtain the claim. Let $A, B \in \{ T_b, S_\la, R, J\}$. 
Since
\begin{align*}
(A \circ B)^{\#} (z) 
&=|\det (A \circ B)' (z)|^{\frac{N-p}{Np}} f((A \circ B)(z)) \\
&=|\det A' ( B (z) ) \cdot \det B'(z)|^{\frac{N-p}{Np}} f(A (B(z) ) ) \\
&=|\det B'(z)|^{\frac{N-p}{Np}} (A^{\#}f )(B(z) ) \\
&= [(B^{\#} \circ A^{\#}) f] (z),
\end{align*}
we have
\begin{align*}
(A \circ B)^{\#} = B^{\#} \circ A^{\#}.
\end{align*}
From this and the claim, we have
\begin{align*}
\int_{\re^N} |\nabla [(A \circ B)^{\#} f ](z) |^p\,dz 
&=\int_{\re^N} |\nabla [ (B^{\#} \circ A^{\#} ) f ](z) |^p\,dz  \\
&=\int_{\re^N} |\nabla (A^{\#}  f )(w) |^p\,dw  \\
&= \int_{\re^N} |\nabla f (\xi)|^p \,d\xi
\end{align*}
for $p=2$ or $N$, where $w=B(z)$ and $\xi = A(w) = (A \circ B) (z)$. 
Since M\"obius transformation is a finite composition of $T_b, S_\la, R, J$, we obtain (\ref{nabla M}) for any $M \in M(\re^N)$ by induction. 
\end{proof}

For more information \tcr{on} M\"obius transformation, see e.g. \cite{A,B}.


\subsection{An example of M\"obius transformation: Cayley type transformation}\label{S Cayley}

Let $N \ge 2$ and $p=2$ or $N$. 
Consider the transformation ${\bf B}$ 
\tcr{from $\re^N$ to $\re^N$ as follows} (Ref. \cite{BFL}).
\begin{align}
\label{Cayley type}
	\( \tx, \ty \) = {\bf B} (x, y)= \( \frac{2x, \,\, 1-|x|^2 -y^2}{(1+y)^2 + |x|^2 }  \) \quad (x, y) \in \re^N, \,\, (\tx, \ty) \in \re^N.
\end{align}
\tcr{We see
\begin{equation}
\label{B maps to ball}
	|{\bf B}(x,y)|^2 = \frac{|x|^2 + (y-1)^2}{|x|^2 + (y +1)^2}, 
\end{equation}
thus if we restrict ${\bf B}$ on $\re^N_{+}$, then ${\bf B}$ maps $\re^N_{+}$ to the unit ball $B_1^N \subset \re^N$.
Also $|{\bf B}(x,y)| = 1$ if and only if $y = 0$, thus ${\bf B} (\pd \re^N_{+}) = \pd B_1^N$.}
We can check that the inverse function ${\bf B}^{-1}$ is the same as ${\bf B}$, that is 
$$
\( x, y \) = \tcr{{\bf B}^{-1}} (\tx, \ty)= \( \frac{2\tx, \,\, 1-|\tx|^2 -\ty^2}{(1+\ty)^2 + |\tx|^2 }  \).
$$
Note that the transformation (\ref{Cayley type}) is a M\"obius transformation\tcr{:${\bf B} \in M (\re^N)$.} 
In fact, we see that
\begin{align}\label{Cayley is Mobius}
{\bf B} (z) = R \circ J \circ T_{e_N} \circ S_2 \circ J \circ T_{-e_N} (z), \text{where}\,\,
R= 
\begin{pmatrix}
1 & & & \\
& \ddots & & \\
& & 1 & \\
& & & -1
\end{pmatrix},\,
e_N = \begin{pmatrix}
0 \\
\vdots \\
0 \\
1
\end{pmatrix},
\end{align}
see \cite{A} p.34 or Proposition \ref{Prop Cayley is Mobius} in \S \ref{S App}. 
Therefore, we obtain the following.

\begin{prop}\label{Prop Cayley}
Let ${\bf B}: \re^N \to B_1^N$ be given by (\ref{Cayley type}) and let $z=(x,y)$. Then
\begin{align*}
\det \tcr{{\bf B'}} (z) = -\left\{ \, \frac{2}{(1+y)^2 + |x|^2} \,\right\}^N. 
\end{align*}
\end{prop}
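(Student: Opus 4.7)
The plan is to exploit the decomposition \eqref{Cayley is Mobius} of ${\bf B}$ as a composition of six elementary Möbius transformations, and then apply the chain rule at the level of Jacobians, reading off each factor's determinant from Remark \ref{Jacobian}. The only non-constant factors come from the two applications of the inversion $J$, whose Jacobian depends on the point at which it is evaluated; everything else contributes a constant sign or the factor $2^N$ from the scaling $S_2$.

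Concretely, writing ${\bf B} = R \circ J \circ T_{e_N} \circ S_2 \circ J \circ T_{-e_N}$, the chain rule gives
\begin{align*}
	\det {\bf B}'(z) = (\det R)\cdot \det J'(w_2) \cdot (\det T_{e_N}') \cdot (\det S_2') \cdot \det J'(w_1) \cdot (\det T_{-e_N}'),
\end{align*}
where $w_1 = z - e_N$ and $w_2 = (T_{e_N} \circ S_2 \circ J)(w_1)$. From Remark \ref{Jacobian} this reduces to
\begin{align*}
	\det {\bf B}'(z) = (-1)\cdot\bigl(-|w_2|^{-2N}\bigr)\cdot 1 \cdot 2^N \cdot \bigl(-|w_1|^{-2N}\bigr)\cdot 1 = -2^N\,|w_1|^{-2N}\,|w_2|^{-2N}.
\end{align*}

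The remaining task is the algebraic simplification of $|w_2|^2$. A direct substitution gives $|w_1|^2 = |x|^2 + (y-1)^2$ and, after inverting and translating, $w_2 = (2x,\, |x|^2+y^2-1)/(|x|^2+(y-1)^2)$. The key identity to verify is
\begin{align*}
	4|x|^2 + (|x|^2+y^2-1)^2 = \bigl(|x|^2+(y-1)^2\bigr)\bigl(|x|^2+(y+1)^2\bigr),
\end{align*}
which is an easy expansion in $|x|^2$ and $y$. Using this, $|w_2|^{2} = (|x|^2+(y+1)^2)/(|x|^2+(y-1)^2)$, so $|w_1|^{-2N}|w_2|^{-2N} = (|x|^2+(1+y)^2)^{-N}$, and the announced formula follows.

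The only potential obstacle is the bookkeeping of signs (since $R$ and $J$ each contribute a factor of $-1$) and making sure the chain rule is applied in the correct order so that the arguments of the two Jacobians $\det J'$ are correctly identified as $w_1$ and $w_2$; once this is set up, the computation is routine. Alternatively, one may avoid \eqref{Cayley is Mobius} entirely and compute $\det {\bf B}'(z)$ directly from \eqref{Cayley type}, but the factorization-based approach above is considerably cleaner.
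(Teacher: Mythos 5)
Your proof is correct and follows the same route as the paper: decompose $\bf B$ via \eqref{Cayley is Mobius}, apply the chain rule for Jacobians, and read off the determinant of each elementary factor from Remark \ref{Jacobian}. You make the final algebraic step more explicit (the identity $4|x|^2 + (|x|^2+y^2-1)^2 = (|x|^2+(y-1)^2)(|x|^2+(y+1)^2)$ leading to $|w_1|^2\,|w_2|^2 = |x|^2+(y+1)^2$), whereas the paper compresses this into a single line, but the argument is the same.
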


\begin{proof}
From (\ref{Cayley is Mobius}) and Remark \ref{Jacobian}, we have
 \begin{align*}
	\det {\bf B}'(z) &= \underbrace{\det R}_{=-1} \cdot \det J'\( T_{e_N} \circ S_2 \circ J \circ T_{-e_N} (z) \) \cdot \underbrace{\det (T_{e_N})' \( S_2 \circ J \circ T_{-e_N} (z) \)}_{=1} \\
	&\cdot \underbrace{\det S_2'\(J \circ T_{-e_N}(z) \)}_{=2^N} \cdot \det J'(T_{-e_N}(z)) \cdot \det \underbrace{(T_{-e_N})'(z)}_{=1} \\ 
	&= (-1) \cdot \frac{(-1)}{|T_{e_N} \circ S_2 \circ J \circ T_{-e_N}(z)|^{2N}} \cdot 2^N \cdot \frac{(-1)}{|T_{-e_N} (z)|^{2N}} \\
	&= (-1) \frac{1}{|e_N + 2(z-e_N)^*|^{2N}} \cdot \frac{1}{|z-e_N|^{2N}} \cdot 2^N \\
	&\tcr{= (-1) \frac{1}{|e_N + 2\frac{(z-e_N)}{|z-e_N|^2}|^{2N}} \cdot \frac{1}{|z-e_N|^{2N}} \cdot 2^N}
	= (-1) \frac{2^N}{\left\{ |x|^2 + (1 + y)^2 \right\}^N}. 
\end{align*}
\end{proof}


\subsection{Harmonic transplantation}\label{S Harmonic}


Harmonic transplantation \tcr{was first} proposed by \tcr{J.} Hersch \cite{H} 
\tcr{in the attempt to extend several isoperimetric problems on two-dimensional simply-connected domains to higher connectivity and higher dimensions,}
see also \cite{F book,BBF}. 
Here, we recall the original harmonic transplantation from $B_1^N$ to $\Omega \subset \re^N$.

For $v \in \dot{W}_{0, {\rm rad}}^{1,p}(B_1^N)$ and $a \in \Omega$, 
define $\tcr{H_a (v)} =u : \Omega \setminus \{ a\} \to \re$ by 
\begin{align}\label{Harmonic trans}
	u(y) = \tcr{H_a (v)}(y) 
	= v \( \, \( G_{B_1^N, O}  \)^{-1} \( G_{\Omega, a} (y) \)  \,\),
\end{align}
\tcr{where $G_{B_1^N, O}$ and $G_{\Omega, a}$ are $p$-Green's functions on the ball $B_1^N$ with the pole $O$ 
and on $\Omega$ with the pole $a \in \Omega$, respectively.}
In the case $p \in (1, N)$, we have 
\begin{equation}
\label{GB0}
	G_{B_1^N, O} (z) = \frac{p-1}{N-p} \w_{N-1}^{-\frac{1}{p-1}} \left[ \,  |z|^{-\frac{N-p}{p-1}}  - 1  \,  \right]
\end{equation}
which implies that
\begin{align*}
	\( G_{B_1^N, O}  \)^{-1} \( G_{\Omega, a} (y) \) 
	= \left[ \, \frac{N-p}{p-1} \w_{N-1}^{\frac{1}{p-1}} G_{\Omega, a} (y) +1  \, \right]^{-\frac{p-1}{N-p}}.
\end{align*}
Also, we can rewrite the transformation (\ref{Harmonic trans}) to
\begin{align}\label{Omega trans}
	u(y) = v(z), \,\text{where}\,\, G_{\Omega, a} (y) = G_{B_1^N, O} (z).
\end{align}
\tcr{Hereafter, we call the transformed function $u = H_a(v)$ on $\Omega \setminus \{ a \}$ via (\ref{Omega trans}) 
{\it the harmonic transplantation} of a function $v \in \dot{W}^{1,p}_{0, rad}(B^N_1)$.} 
\tcr{We see} that harmonic transplantation (\ref{Omega trans}) preserves $\| \nabla (\cdot)\|_{L^p}$ for \tcr{$p \in (1, N]$}. 
\tcr{A proof of this fact is shown for the sake of reader's convenience.}

\begin{lemma}\label{Lemma Omega}(\cite{F book} Theorem 10.3, \cite{CRN} Lemma 22)
Let $v \in \dot{W}_{0, {\rm rad}}^{1,p}(B_1^N)$ and $1<p \le N$. 
Then $\tcr{H_a(v)} \in \dot{W}_0^{1,p}(\Omega)$ and $\| \nabla \( \tcr{H_a(v)} \) \|_{L^p (\Omega)} = \| \nabla v \|_{L^p (B_1^N)}$.
\end{lemma}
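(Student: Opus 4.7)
The plan is to represent both $v$ and its harmonic transplantation $H_a(v)$ as a common one-variable function composed with the corresponding $p$-Green's function, and then reduce the equality of $L^p$-norms of the gradients to a one-dimensional identity via the co-area formula.

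\textbf{Representation.} Since $v \in \dot{W}^{1,p}_{0,{\rm rad}}(B_1^N)$ is radial and, by \eqref{GB0}, $G_{B_1^N, O}$ is a strictly decreasing function of $|z|$ mapping $B_1^N \setminus \{O\}$ onto $(0,\infty)$, there is a unique function $\varphi : [0,\infty) \to \re$ with $\varphi(0)=0$ such that $v(z) = \varphi(G_{B_1^N, O}(z))$. The defining relation \eqref{Omega trans} then gives $H_a(v)(y) = \varphi(G_{\Omega, a}(y))$. The boundary condition $H_a(v)|_{\pd\Omega} = 0$ is automatic, since $G_{\Omega, a}$ vanishes on $\pd\Omega$ and $\varphi(0)=0$.

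\textbf{Co-area and flux identity.} First I would assume $v \in C_c^{\infty}(B_1^N \setminus \{O\})$, so that $\varphi \in C_c^{\infty}((0,\infty))$. Applying the chain rule and the co-area formula,
\begin{equation*}
\int_\Omega |\nabla H_a(v)|^p \, dy = \int_0^\infty |\varphi'(t)|^p \left( \int_{\{G_{\Omega, a} = t\}} |\nabla G_{\Omega, a}|^{p-1} \, dS \right) dt,
\end{equation*}
and similarly for $v$ on $B_1^N$. To evaluate the inner integral, I would test the distributional equation $-\Delta_p G_{\Omega, a} = \delta_a$ against the characteristic function of the super-level set $\{G_{\Omega, a} > t\}$, which is relatively compact in $\Omega$ and contains $a$. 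The divergence theorem, combined with the fact that $\nabla G_{\Omega, a}$ points into the super-level set on its boundary, yields
\begin{equation*}
\int_{\{G_{\Omega, a} = t\}} |\nabla G_{\Omega, a}|^{p-1} \, dS = 1 \quad \text{for a.e.\ } t > 0,
\end{equation*}
and the same identity on $B_1^N$. Hence both $L^p$-gradient norms reduce to $\int_0^\infty |\varphi'(t)|^p \, dt$ and are equal.

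\textbf{Approximation.} For general $v$, I would approximate it by a sequence of smooth radial functions $v_k$ compactly supported in $B_1^N \setminus \{O\}$ with $\nabla v_k \to \nabla v$ in $L^p(B_1^N)$ (cutting off near the pole using $p$-capacity, which is justified since $p \le N$). The identity already established, applied to the differences $v_k - v_j$, shows that $H_a(v_k)$ is Cauchy in $\dot{W}^{1,p}_0(\Omega)$; its limit is $H_a(v)$, with the desired norm preservation.

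\textbf{Main obstacle.} The delicate point will be the rigorous application of the co-area formula and the flux identity when $p \ne 2$, where $\nabla G_{\Omega, a}$ could in principle vanish on a set of positive measure and $G_{\Omega, a}$ is only $C^{1,\alpha}$ away from the pole. This is overcome by invoking the regularity and asymptotic results of Kichenassamy--V\'eron \cite{KV} for $p$-harmonic Green functions, which guarantee that $G_{\Omega, a}$ has enough regularity for the co-area formula to apply on almost every level, and that the flux through small spheres around $a$ converges to $1$, validating the boundary term in the divergence theorem.
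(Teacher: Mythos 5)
Your proposal is correct and follows essentially the same route as the paper: write $u = \varphi \circ G_{\Omega,a}$ and $v = \varphi \circ G_{B_1^N,O}$, reduce each gradient $L^p$-norm to a one-dimensional integral via the chain rule and co-area formula, and use the flux identity $\int_{\{G=t\}}|\nabla G|^{p-1}\,d\mathcal{H}^{N-1}=1$ (which the paper cites from \cite{F book} and \cite{CRN}, while you derive it from the distributional equation $-\Delta_p G=\delta_a$). Your explicit approximation step through $C_c^\infty(B_1^N\setminus\{O\})$ via $p$-capacity is a bit more careful than the paper, which leaves that part implicit.
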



\begin{proof}

In the case where $p=N$, see \cite{CRN} Lemma 22. 
Let $1<p<N$. 
We write $G=G_{\Omega, a}$. Let $h$ be defined by 
$$
	h(y) = \tcr{\( G_{B^N_1, O} \)^{-1} \( G(y) \)} = \left[ \, \frac{N-p}{p-1} \w_{N-1}^{\frac{1}{p-1}} G (y) +1  \, \right]^{-\frac{p-1}{N-p}},
	\quad \tcr{y \in \Omega,}
$$
and hence $u(y)= v(h(y))$. 
In particular, $\nabla u(y)= v' (h(y)) \nabla h(y)$. 
Note that since $G \ge 0$ in $\Omega$, we get that \tcr{$0 < h(y) \le 1$ on $\ol{\Omega}$ and if} $y \in h^{-1}(\{ t\} ) \cap \Omega$, 
then $t \in [0,1]$. 
Thus the coarea formula gives that 
\begin{align*}
	\int_{\Omega} |\nabla u |^p 
	&= \int_{\Omega} |v'(h(y))|^p |\nabla h (y)|^{p-1} |\nabla h (y)| \,dy\\
	&=\int_0^1 \left[ \, \int_{h^{-1}(\{ t\} ) \cap \Omega} |v'(h(y))|^p |\nabla h (y)|^{p-1} \,d \mathcal{H}^{N-1}(y) \,\right] \,dt.
\end{align*}
Using $|\nabla h| = \w_{N-1}^{\frac{1}{p-1}} h(y)^{\frac{N-1}{p-1}} |\nabla G (y)|$, 
\tcr{we have} 
\begin{align*}
	\int_{\Omega} |\nabla u |^p 
	=\int_0^1 \w_{N-1} t^{N-1} |v'(t)|^p\left[ \, \int_{h^{-1}(\{ t\} ) \cap \Omega}  |\nabla G (y)|^{p-1} \,d \mathcal{H}^{N-1}(y) \,\right] \,dt.
\end{align*}
Note that $h^{-1}(\{ t\} ) \cap \Omega$ is \tcr{also} a level set of $G$. 
Since
\begin{align*}
	\int_{\{  \tcr{G} < t \}} |\nabla \tcr{G}(y) |^{p} \,dy = t, \quad 
	\int_{\{  \tcr{G} = t \}} |\nabla \tcr{G}(y) |^{p-1} \,d\mathcal{H}^{N-1}(y) = 1.
\end{align*}
for any $t \in [0, \infty )$ 
(Ref. \tcr{\cite{F book} Lemma 9.1, or} \cite{CRN} Proposition 4), 
we obtain
\begin{align*}
\int_{h^{-1}(\{ t\} ) \cap \Omega}  |\nabla G (y)|^{p-1} \,d \mathcal{H}^{N-1}(y) 
= 1 \quad ({}^{\forall}t \in (0,1)),
\end{align*}
which implies that 
\begin{align*}
	\int_{\Omega} |\nabla u |^p =\int_0^1 \w_{N-1} t^{N-1} |v'(t)|^p \,dt = \int_{B_1^N} |\nabla v|^p.
\end{align*}
\end{proof}

\tcr{Up to now,} various transformations are \tcr{found in literature} so far. 
These transformations can be understood as a \tcr{variant} of harmonic transplantation.  
Here, we classify these transformations into the \tcr{following} three types:\\

\noindent
{\it I. \,\,\,\,Domains of two \tcr{Green's} functions in (\ref{Omega trans}) are different \tcr{from} each other. \\
II. \,\,Operators of two \tcr{Green's} functions in (\ref{Omega trans}) are different \tcr{from} each other. \\
III. Dimensions of two \tcr{Green's} functions in (\ref{Omega trans}) are different \tcr{from} each other.}\\

\noindent
Original harmonic transplantation (\ref{Omega trans}) is type I. 
For reader's convenience, we unify these transformations in the form of (\ref{Omega trans}) and summarize their \tcr{properties} briefly. 
In the present paper, we use harmonic transplantation (\ref{Omega trans}) in I.-(ii) \tcr{below}.  
\\

\noindent
{\it I.-(i): Critical case: $1< p = N$}

\noindent
If $\Omega = B_R^N, a=O$, then \tcr{the} harmonic transplantation \tcr{$u = H_a(v)$ in} (\ref{Omega trans}) becomes
\begin{align*}
u(y)= v(z), \,\text{where}\,\, G_{B_1^N, O}(y)= \w_{N-1}^{-\frac{1}{N-1}} \log \frac{1}{|y|} = \w_{N-1}^{-\frac{1}{N-1}} \log \frac{R}{|z|} = G_{B_R^N, O}(z)
\end{align*}
which coincides with the scaling $z= S_R (y) = Ry$. 
On the other hand, if $\Omega = \re_+^N$, $\tcr{a=(0,1)}$,
then \tcr{the} harmonic transplantation \tcr{$u = H_a(v)$ in} (\ref{Omega trans}) coincides with \tcr{the function ${\bf B}^{\sharp} v$ by} 
the Cayley type transformation \tcr{${\bf B}$} in (\ref{Cayley type}), see \S \ref{S Cayley}. 
\tcr{Similar to the various rearrangement techniques, 
harmonic transplantation (\ref{Omega trans}) enables us to construct appropriate test functions for various minimization or maximization problems:
we refer the readers to the application of harmonic transplantation to the study of} the Trudinger-Moser maximization problem on general bounded domain $\Omega$ (Ref. \cite{F, CR, L, CRN}). \\

\noindent
{\it I.-(ii): Subcritical case: $1<p<N$} 

\noindent
If $\Omega = B_R^N$ (let $B_\infty^N = \re^N$ and $\frac{1}{\infty} = 0$), $a=O$, 
then \tcr{the} harmonic transplantation \tcr{$u = H_a(v)$ in} (\ref{Omega trans}) becomes
\begin{align*}
	u(y)= v(z), \,\text{where}\,\, G_{B_1^N, O}(y)
	&= \frac{p-1}{N-p} \w_{N-1}^{-\frac{1}{p-1}} \left[ \,  |y|^{-\frac{N-p}{p-1}} -1  \,  \right] \\
	&= \frac{p-1}{N-p} \w_{N-1}^{-\frac{1}{p-1}} \left[ \,  |z|^{-\frac{N-p}{p-1}} -R^{-\frac{N-p}{p-1}}  \,  \right] 
	= G_{B_R^N, O}(z)
\end{align*}
which does not coincide with \tcr{$S_R^{\sharp}(v)$, here $S_R$ is the dilation} $z= S_R (y) = Ry \,(R<\infty)$, unlike I.-(i). 

We can obtain improved Hardy-Sobolev inequalities on $B_R^N$ via (\ref{Omega trans}), 
which are equivalent to the Hardy-Sobolev inequalities on $\re^N$ (Ref. \cite{I}. See also \cite{S(NA),S(ArXiv)}). 
Not only \tcr{the improvement of the inequalities}, 
but also a limit of the improved Hardy-Sobolev inequalities on $B_R^N$ as $p \nearrow N$ can be \tcr{considered}, 
unlike the classical \tcr{cases}.
For the subcritical Rellich inequality, a part of this argument still holds, see \S 2 in \cite{S(Rellich)}. 
A limit of \tcr{the} Hardy-Sobolev and the Poincar\'e \tcr{inequalities} (in some sense) can be considered, 
\tcr{see \cite{I} and \cite{BP} for taking a limit $p \nearrow N$ or $N \nearrow \infty$ in the Sobolev inequality respectively,
\cite{SS} for $p \nearrow N$ in the Hardy inequality, and $|\Omega| \searrow 0$ in the Poincar\'e inequality.
Also see \cite{S(RIMS)} for a survey.}


In the present paper, we consider \tcr{the} harmonic transplantation \tcr{$u = H_a(v)$, $v \in \dot{W}^{1,p}_{0, rad}(B^N_1)$, in} (\ref{Omega trans}) 
\tcr{for} $\Omega = \re_+^N$, \tcr{$a = e_N = (0,1)$, and $p \in (1,N)$.} 
Namely, 
\begin{align}\label{half-space trans}
	u(x,y)&= v(\tx, \ty), \,\text{where} \notag \\
	G_{\re^N_+, (0,1)} (x, y) 
	&= \frac{p-1}{N-p} \w_{N-1}^{-\frac{1}{p-1}} \left[ \,  \( |x|^2 +(1-y)^2 \)^{-\frac{N-p}{2(p-1)}} - \psi_p (x,y) \,\right] \notag \\
	&= \frac{p-1}{N-p} \w_{N-1}^{-\frac{1}{p-1}} \left[ \,  \( |\tx|^2 + |\ty|^2 \)^{-\frac{N-p}{2(p-1)}} -1  \,  \right]= G_{B_1^N, O}(\tx, \ty)
\end{align}
\tcr{where $\psi_p$ is as in \eqref{G half}.}

\begin{remark}\label{Rem two scaling}
\tcr{
We point out that, in the case $2=p<N$,
there are at least two transformations \tcr{$u$ of $v \in \dot{W}^{1,2}_{0,rad}(B^N_1)$}, 
by which $\| \nabla u \|_{L^2(\Omega)} = \| \nabla v \|_{L^2(B_1^N)}$ holds. 
Indeed, when $\Omega = B_R^N$ for $R < \infty$, the harmonic transplantation $u = H_0(v)$ and the M\"obius transformation $u = S_R^{\#}(v)$ 
from $\dot{W}^{1,2}_{0,rad}(B_1^N)$ to $\dot{W}^{1,2}_0(B_R^N)$ preserve the $L^2$ norm of the gradient.
Also when $\Omega = \re^N_{+}$, the harmonic transplantation $u = H_{e_N}(v)$ and the M\"obius transformation $u = {\bf B}^{\#}(v)$ 
via the Cayley type transformation (\ref{Cayley type}) from $\dot{W}^{1,2}_{0,rad}(B_1^N)$ to $\dot{W}^{1,2}_0(\re^N_+)$ have the same property, 
see \S \ref{S Cayley}. 
}
\end{remark}


\noindent
{\it II.}: {\it From weighted problem to \tcr{unweighted} problem}

\noindent
Let $1<p<N$ and $\tilde{G}_{B_1^N, O}$ be Green's function with singularity at $O$ associated with the weighted $p$-Laplace operator 
${\rm div }(|y|^{p-N} |\nabla (\cdot)|^{p-2} \nabla (\cdot))$. 
Define \tcr{$u: B^N_1 \to \re$ by}
\begin{align*}
	u(y)= v(z), \,&\text{where}\,\, \tilde{G}_{B_1^N, O}(y)
	=  \w_{N-1}^{-\frac{1}{p-1}}  \log \frac{1}{|y|} 
	= \frac{p-1}{N-p} \w_{N-1}^{-\frac{1}{p-1}} |z|^{-\frac{N-p}{p-1}}  
	= G_{\re^N, O}(z), \\
	&\tcr{y \in B_1^N, \, z \in \re^N.}
\end{align*}
Then we have $\| \nabla u \|_{L^p(B_1^N ;\, |y|^{p-N} \,dy)} = \| \nabla v \|_{L^p (\re^N)}$ (Ref. \cite{Z, HK}). 
We can remove the weight $|y|^{p-N}$ thanks to the above transformation. \\


\noindent
{\it III.-(i): From higher dimensions to one dimension}

\noindent
Consider \tcr{the} Moser transformation
\begin{align*}
u(y)= v(z), \,\text{where}\,\, G_{\tcr{B_R^N,} O}(y)
= \w_{N-1}^{-\frac{1}{N-1}} \log \frac{R}{|y|}
= z.\,
\quad \tcr{y \in B_R^N, z \in \re_{+}}.
\end{align*}
Then we have $\| \nabla u \|_{L^N(B_R^N)} = \| v' \|_{L^N (\re_+)}$. 
\tcr{The} Moser transformation is used to reduce the Trudinger-Moser maximization problem on $\dot{W}_{0, \text{rad}}^{1,N}(B_R^N)$ 
to the one-dimensional problem (Ref. \cite{M}).  
On the other hand, if we consider the Moser transformation on the subcritical Sobolev spaces $\dot{W}_{0, {\rm rad}}^{1,p}(B_R^N)\,(p< N)$, 
then we have
\begin{align*}
	u(y)= v(z), \,\text{where}\,\, G_{\tcr{B_R^N}, O}(y)
	= \frac{p-1}{N-p} \w_{N-1}^{-\frac{1}{p-1}} \left[ \, |y|^{-\frac{N-p}{p-1}} -R^{-\frac{N-p}{p-1}} \,\right]
	= z.\,
\end{align*}
Then \tcr{again} we have $\| \nabla u \|_{L^p(B_R^N)} = \| v' \|_{L^p (\re_+)}$.  
For an application of these transformations, see Proposition \ref{Prop Bliss} in \S \ref{S App}. 
\\


\noindent
{\it III.-(ii): Relation between the critical and the subcritical Sobolev spaces}

\noindent
Let $p=N < m$. If we consider \tcr{the relation} 
\begin{align}\label{trans dim}
	u(y)= v(z), \,\text{where}\,\, G_{B_R^N, O}(y) &= \w_{N-1}^{-\frac{1}{N-1}} \log \frac{R}{|y|} \notag \\
	&= \frac{N-1}{m-N} \w_{m-1}^{-\frac{1}{N-1}} \left[ \, |z|^{-\frac{m-N}{N-1}} -R^{-\frac{m-N}{N-1}} \,\right] = G_{B_R^m, O}(z),
\end{align}
\tcr{for $u \in \dot{W}^{1,N}_{0, rad}(B^N_R)$, $v \in \dot{W}^{1,N}_{0, rad}(B^m_R)$.} 
then we have $\| \nabla u \|_{L^N (B_R^N)} = \| \nabla v \|_{L^N (B_R^m)}$. 
Namely, we obtain the equality between two norms of the critical Sobolev spaces $\dot{W}_{0, \text{rad}}^{1,N} (B_R^N)$ 
and the higher dimensional subcritical Sobolev spaces $\dot{W}_{0, \text{rad}}^{1,p} (B_R^m)$ (Ref. \cite{ST}). 
This transformation (\ref{trans dim}) gives a direct relation between the subcritical Sobolev embeddings  
\begin{align*}
	\dot{W}_{0, \text{rad}}^{1,p} \hookrightarrow L^{p^*, p} \hookrightarrow L^{p^*, q} \hookrightarrow L^{p^*, \infty}
\end{align*}
\tcr{where $p < q$,}
and the critical Sobolev embeddings
\begin{align*}
	\dot{W}_{0, \text{rad}}^{1,N} \hookrightarrow L^{\infty, N}(\log L)^{-1} \hookrightarrow L^{\infty, q}(\log L)^{-1+ \frac{1}{N} - \frac{1}{q}} \hookrightarrow L^{\infty, \infty}(\log L)^{-1+\frac{1}{N}} = {\rm Exp L}^{\frac{N}{N-1}}.
\end{align*}
For the subcritical and the critical Sobolev embeddings, see e.g. \cite{SS} \S 1. 
\\

\noindent
{\it III.-(iii): An infinite dimensional form of the Sobolev inequality}

\noindent
Let $p< N < m$. If we consider 
\begin{align*}
	u(y)= v(z), \,&\text{where}\,\, G_{\re^N, O}(y)
	= \frac{p-1}{N-p} \w_{N-1}^{-\frac{1}{p-1}} |y|^{-\frac{N-p}{p-1}}
	= \frac{p-1}{m-p} \w_{m-1}^{-\frac{1}{p-1}} |z|^{-\frac{m-p}{p-1}}
	= G_{\re^m, O}(z), \\
	&\tcr{y \in \re^N, \, z \in \re^m}
\end{align*}
then we have $\| \nabla u \|_{L^p (\re^N)} = \| \nabla v \|_{L^p (\re^m)}$. 
Namely, we can reduce the $m$-dimensional Sobolev inequality: 
$S_{m,p} \| v \|^p_{L^{p^*}(\re^m)} \le \| \nabla v\|_{L^p(\re^m)}^p$ 
to \tcr{an} $N$-dimensional inequality \tcr{for $u$, which involves $m$ as a parameter and $m$ can be arbitrarily large}. 
Therefore, we can take a limit of the $m$-dimensional Sobolev inequality as $m \nearrow \infty$ \tcr{in this sense.} 
As a consequence, we can obtain the $N$-dimensional Hardy inequality: 
$\( \frac{N-p}{p} \)^p \int_{\re^N} \frac{|u|^p}{|y|^p} \, dy \le \int_{\re^N} |\nabla u|^p \,dy$ 
as an infinite dimensional form of the Sobolev inequality (Ref. \cite{S(NA)}).

%
%

\section{Proof of Theorems}\label{S Proof}

First, we show Theorem \ref{Thm CH}. 

\begin{proof}(Proof of Theorem \ref{Thm CH})
Let $p=N$. 
\tcr{We will ``transplant" the critical Hardy inequality on $B_1^N$ to $\re^N_{+}$ by the Cayley type transformation 
${\bf B}$ \eqref{Cayley type} in \S \ref{S Cayley}.}

\tcr{By \eqref{B maps to ball}, we consider ${\bf B}$ maps $\re^N_{+}$ to $B^N_1$.}
\tcr{Let $u \in C_c^1(\re^N_{+})$ and put $v(\tx, \ty) = ({\bf B}^{-1})^{\#}(u)(\tx, \ty)$ for $(\tx, \ty) \in B^N_1$. 
Then we see $u(x,y) = {\bf B}^{\#} (v)(x,y) = v({\bf B}(x,y))$ for $(x,y) \in \re^N_{+}$.} 
\tcr{From the fact that ${\bf B} \in M(\re^N)$} and Proposition \ref{Prop Mobius}, 
we have
\begin{align*}
	\int_{\re^N_+} |\nabla u\tcr{(x,y)}|^N \,dx d y = \int_{B_1^N} |\nabla v\tcr{(\tx, \ty)}|^N \,d\,\tx \, d\, \ty
\end{align*}
\tcr{where $(\tx, \ty) = {\bf B}(x, y)$.}
Since $|\tx|^2 + \ty^2 = \frac{|x|^2 + (y-1)^2}{|x|^2 + (y +1)^2}$ \tcr{by \eqref{B maps to ball}}, 
we have
\begin{align*}
	&\int_{B_1^N} \frac{|\tcr{v(\tx, \ty)}|^N}{\{ | \tx|^2 + |\ty|^2 \}^{\frac{N}{2}} 
	\( \log \frac{1}{\sqrt{ |\tx|^2 + |\ty|^2}} \)^N }\,d \,\tx \,d \,\ty \\
	&=\tcr{\int_{\re^N_+} \frac{|u(x,y)|^N}{|{\bf B}(x,y)|^{\frac{N}{2}} \( \log \frac{1}{|{\bf B}(x,y)|} \)^N} | \det {\bf B'}(x,y) | 
\,\,dx \,dy} \\
	&=\int_{\re^N_+}  \frac{|\tcr{u(x,y)}|^N  }{\{ |x|^2 + (1-y)^2 \}^{\frac{N}{2}} \left\{ \frac{|x|^2 + (1+y)^2}{4} \right\}^{\frac{N}{2}} \( \log \sqrt{ \frac{|x|^2+ (1+y)^2}{|x|^2 + (1-y)^2}} \)^N } \,dx dy\tcr{.}
\end{align*}
\tcr{Thus, we obtain \eqref{H_N half} by the critical Hardy inequality on the unit ball for $v$} (Ref. \cite{AS,II, TF}):
\begin{align*}
	\( \frac{N-1}{N} \)^N \int_{B_1^N} \frac{|\tcr{v(\tx, \ty)}|^N}{\{ |\tx|^2 + |\ty|^2 \}^{\frac{N}{2}} 
	\( \log \frac{1}{\sqrt{ |\tx|^2 + |\ty|^2}} \)^N }\,d \,\tx \,d \,\ty \le \int_{B_1^N} |\nabla \tcr{v(\tx, \ty)}|^N \,d\,\tx \, d\, \ty.
\end{align*}
\tcr{Optimality} and the non-attainability of the constant $\( \frac{N-1}{N} \)^N$ \tcr{in \eqref{H_N half}} follows 
from results for the critical Hardy inequality on the unit ball also.
\end{proof}

In the same way as above, we also obtain \tcr{a} Trudinger-Moser type inequality on the half-space from the result on balls 

\begin{theorem}\label{Thm TM}
Let $N \ge 2$. Then
\begin{align*}
	\sup \left\{ \int_{\re^N_+} e^{\alpha \,|u(x,y)|^{\frac{N}{N-1}}}  \,\frac{2^N \,dxdy}{\left\{  |x|^2 + (y+1)^2 \right\}^N} \,\,\middle| 
	\,\, \|\nabla u\|_{\tcr{L^N}(\re^N_+)} \le 1, \, u \in \dot{W}_{0}^{1,N}(\re^N_+) \,\, \right\} 
\end{align*}
is finite if and only if $\alpha \le N \w_{N-1}^{\frac{1}{N-1}}$. 
Moreover, the above maximization problem is attained for any $\alpha \le N \w_{N-1}^{\frac{1}{N-1}}$. 
\end{theorem}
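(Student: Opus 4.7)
I will prove Theorem \ref{Thm TM} by the same strategy used for Theorem \ref{Thm CH}: transporting the problem on the half-space $\re^N_+$ to the unit ball $B_1^N$ via the Cayley type transformation $\mathbf{B}$ defined in \eqref{Cayley type}, and reducing to the classical Trudinger-Moser maximization problem on the unit ball. Since $\mathbf{B} \in M(\re^N)$ is a M\"obius transformation and we are in the critical case $p = N$, the correspondence $u(x,y) = v(\mathbf{B}(x,y))$ relates functions on the two domains without any Jacobian factor (see \eqref{M trans}).

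The key computations are as follows. First, by Proposition \ref{Prop Mobius} applied with $p = N$, we have $\|\nabla u\|_{L^N(\re^N_+)} = \|\nabla v\|_{L^N(B_1^N)}$. Second, by the change of variables formula combined with Proposition \ref{Prop Cayley}, the Jacobian satisfies $|\det \mathbf{B}'(x,y)| = 2^N / (|x|^2 + (1+y)^2)^N$, which is precisely the weight appearing in the statement of the theorem. Consequently, $\int_{\re^N_+} e^{\alpha |u|^{N/(N-1)}} \cdot 2^N / (|x|^2 + (1+y)^2)^N \, dx dy = \int_{B_1^N} e^{\alpha |v|^{N/(N-1)}} d\tilde{x} d\tilde{y}$, so the supremum in Theorem \ref{Thm TM} coincides with the classical Trudinger-Moser supremum on $B_1^N$ for the admissible class $\{ v \in W_0^{1,N}(B_1^N) : \|\nabla v\|_{L^N(B_1^N)} \le 1 \}$.

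The conclusion then follows from the classical Trudinger-Moser inequality on $B_1^N$: the supremum is finite if and only if $\alpha \le N\omega_{N-1}^{1/(N-1)}$, and is attained for every such $\alpha$ (the critical case being the celebrated theorem of Carleson-Chang for $N = 2$, extended to $N \ge 3$ by Lin and Flucher, while the subcritical cases follow by a standard compactness argument). The main technical point to verify is the bijective isometric correspondence $u \leftrightarrow v$ between the admissible classes $\dot{W}_0^{1,N}(\re^N_+)$ and $W_0^{1,N}(B_1^N)$. The transformation $\mathbf{B}$ is a smooth diffeomorphism that extends continuously to send $\partial\re^N_+$ and the point at infinity of $\re^N_+$ to $\partial B_1^N$ (with infinity mapping to the south pole $(0,-1)$). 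A density argument based on cutting off $v$ near $(0, -1)$ in $B_1^N$ (and correspondingly $u$ at infinity in $\re^N_+$) shows that $C_c^\infty(\re^N_+)$ corresponds densely, via the transformation, to a dense subset of $W_0^{1,N}(B_1^N)$, thereby establishing equivalence of the two maximization problems and completing the proof.
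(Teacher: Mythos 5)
Your proposal is correct and takes essentially the same approach as the paper: transport the problem to $B_1^N$ via the Cayley type transformation $\mathbf{B}$, note that $|\det \mathbf{B}'|$ is precisely the weight in the integral while $\|\nabla\cdot\|_{L^N}$ is preserved (since the exponent $\frac{N-p}{Np}$ in \eqref{M trans} vanishes for $p=N$), and invoke the Carleson--Chang result on the unit ball. Your additional remarks on the density/correspondence of the admissible classes fill in a detail the paper leaves implicit but do not change the argument.
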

\tcr{
\begin{proof}
Again, we use the transformation $v = ({\bf B}^{-1})^{\#}(u)$, $u \in \dot{W}^{1,N}_0(\re^N_{+})$.
Since $|{\rm det} \, {\bf B}'(x,y)| = \frac{2^N}{\left\{|x|^2 + (y+1)^2 \right\}^N}$, 
the theorem follows from the Trudinger-Moser inequality on $B^N_1$ and its attainability: see \cite{CC}. 
\end{proof}
}


Next, we show Theorem \ref{Thm IH}. 

\tcr{Before that, we claim the following Theorem:}

\begin{theorem}\label{Thm psi}
Let $1<p<N$. Then the Hardy type inequality
\begin{align*}
	\( \frac{N-p}{p} \)^p \int_{\re^N_+} \frac{W_p(x,y)^{\frac{p}{2}}}{\( |x|^2 +(1-y)^2 \)^{\frac{p}{2}}} |u(x,y)|^{p} \,dxdy 
	\le \int_{\re^N_+} |\nabla u(x,y)|^p \,dxdy,
\end{align*}
holds for any $u \in C_{c}^{1}(\re^N_+)$ \tcr{of the form $u(x,y) = u\(G_{\re^N_+, \tcr{e_N}}(x,y)\)$}, 
where $\psi_p$ is given in \S \ref{S Green} and for $(x,y) \in \re^N_+$,
\begin{align*}
	W_p(x,y) &= \frac{1+ \( |x|^2 +(1-y)^2 \)^{\frac{N-1}{p-1}} |\nabla \psi_p|^2 -2 \( |x|^2 +(1-y)^2 \)^{\frac{N-p}{2(p-1)}+1} \nabla \psi_p \cdot
	\begin{pmatrix}
	x \\
	y-1
	\end{pmatrix}
	}
	{\left[ 1-\tcr{\tilde{X}}^{\frac{N-p}{2(p-1)}} \right]^2}, \\
	\tcr{\tilde{X}} &= \psi_p (x,y)^{\frac{2(p-1)}{N-p}} \left\{ |x|^2 +(1-y)^2\right\}.
\end{align*}
\end{theorem}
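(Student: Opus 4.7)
The approach I would take is a harmonic transplantation from the unit ball $B_1^N$ to the half-space $\re^N_+$ using the actual $p$-Green's function $G_{\re^N_+, e_N}$ as the transplantation kernel, in the spirit of I.-(ii) of \S \ref{S Harmonic}. In contrast to Theorem \ref{Thm IH}, where the explicit proxy $U_p$ is used in place of $G_{\re^N_+, e_N}$ to avoid the unknown regular part $\psi_p$, here we exploit $G_{\re^N_+, e_N}$ directly, at the cost that the weight $W_p$ depends on $\psi_p$ and $\nabla \psi_p$.

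Given $u \in C^1_c(\re^N_+)$ of the stated form, I would define an associated radial function $v \in \dot{W}^{1,p}_{0, \rm rad}(B_1^N)$ by $u(x,y) = v(\tx, \ty)$ whenever $G_{\re^N_+, e_N}(x,y) = G_{B_1^N, O}(\tx, \ty)$. In view of \eqref{G half} and \eqref{GB0}, this defining relation is equivalent to
\begin{equation*}
|(\tx, \ty)|^{-(N-p)/(p-1)} = \bigl(|x|^2 + (1-y)^2\bigr)^{-(N-p)/(2(p-1))} - \psi_p(x,y) + 1,
\end{equation*}
so $v$ is well defined. By Lemma \ref{Lemma Omega}, $\|\nabla u\|_{L^p(\re^N_+)} = \|\nabla v\|_{L^p(B_1^N)}$, and applying the classical radial Hardy inequality on $B_1^N$ to $v$ yields
\begin{equation*}
\left(\tfrac{N-p}{p}\right)^p \int_{B_1^N} \frac{|v(\tx,\ty)|^p}{|(\tx,\ty)|^p}\, d\tx\, d\ty \le \int_{\re^N_+} |\nabla u|^p\, dx\, dy.
\end{equation*}

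It remains to rewrite the left-hand side as $\int_{\re^N_+} (W_p^{p/2}/r^p)\, |u|^p\, dx\, dy$, with $r = \sqrt{|x|^2 + (1-y)^2}$. Since $u$ is constant on each level set $\{G_{\re^N_+, e_N} = t\}$ and $v$ is constant on the corresponding sphere $\{|\tilde z| = \phi(t)\}$ in $B_1^N$ (with $\phi = G_{B_1^N, O}^{-1}$), the coarea formula reduces both integrals to one-dimensional integrals in $t$. Using the $p$-Green's function normalization $\int_{\{G_{\re^N_+, e_N} = t\}} |\nabla G_{\re^N_+, e_N}|^{p-1}\, d\mathcal{H}^{N-1} = 1$, the two reduced integrands match pointwise by the identity
\begin{equation*}
\frac{W_p(x,y)^{p/2}}{r^p} = \omega_{N-1}^{p/(p-1)}\, \phi\bigl(G_{\re^N_+, e_N}(x,y)\bigr)^{p(N-p)/(p-1)}\, \bigl|\nabla G_{\re^N_+, e_N}(x,y)\bigr|^p,
\end{equation*}
which defines $W_p$ implicitly in terms of $\psi_p$ and $\nabla \psi_p$ through $G_{\re^N_+, e_N}$.

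The main obstacle is the explicit algebraic computation turning this abstract expression into the stated formula for $W_p$. From \eqref{G half}, a direct expansion of $|\nabla G_{\re^N_+, e_N}|^2$ produces three terms involving $r^{-2(N-p)/(p-1)-2}$, the cross product $\nabla\psi_p \cdot (x, y-1)^T$, and $|\nabla \psi_p|^2$; after multiplication by the appropriate powers of $r$, these reproduce the three terms in the numerator of $W_p$. The denominator $[1 - \tilde X^{(N-p)/(2(p-1))}]^2 = [1 - \psi_p r^{(N-p)/(p-1)}]^2$ emerges from the factor $\phi(G_{\re^N_+, e_N})^{p(N-p)/(p-1)}$ via the identity $1 - \psi_p r^{(N-p)/(p-1)} = \tfrac{N-p}{p-1}\omega_{N-1}^{1/(p-1)}\, r^{(N-p)/(p-1)}\, G_{\re^N_+, e_N}$, obtained by factoring the definition of $G_{\re^N_+, e_N}$ from \eqref{G half}. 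Carrying out these manipulations carefully delivers the stated form of $W_p$.
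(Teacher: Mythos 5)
Your overall plan (transplant $u$ to a radial function via the genuine $p$-Green's function $G_{\re^N_+, e_N}$, apply the classical Hardy inequality, and pull back) is the same as the paper's. But there is a concrete gap: you transplant to the \emph{unit ball} $B_1^N$ and then apply the plain Hardy inequality on $B_1^N$, whereas the paper transplants to $\re^N$ (``the harmonic transplantation between $\re^N_{+}$ and $\re^N$'' and the Hardy inequality \eqref{H_p} on $\re^N$, as is also visible in the structure of Lemma \ref{Lemma IH gene} and the proof of Theorem \ref{Thm IH}). This choice is not cosmetic: it changes the denominator of the transplanted weight and your argument, as written, does not produce the stated $W_p$.

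To see the discrepancy, set $r^2 = |x|^2 + (1-y)^2$ and write $h(x,y) = |\tilde z|$ for the radius of the transplanted point. With your ball transplantation, the defining relation $G_{\re^N_+, e_N}(x,y) = G_{B_1^N, O}(\tilde z)$ gives, exactly as you wrote,
\[
h^{-\frac{N-p}{p-1}} = r^{-\frac{N-p}{p-1}} - \psi_p(x,y) + 1,
\]
so the factor $\phi(G_{\re^N_+, e_N})^{p(N-p)/(p-1)}$ in your final identity equals $\bigl[r^{-(N-p)/(p-1)} - \psi_p + 1\bigr]^{-p}$. Factoring out $r^{-(N-p)/(p-1)}$ produces a denominator of the form $\bigl[1 - \psi_p\, r^{(N-p)/(p-1)} + r^{(N-p)/(p-1)}\bigr]^2$, with a spurious extra term $+\,r^{(N-p)/(p-1)}$ that does \emph{not} cancel and is not present in $\bigl[1 - \tilde X^{(N-p)/(2(p-1))}\bigr]^2 = \bigl[1 - \psi_p\, r^{(N-p)/(p-1)}\bigr]^2$. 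Your invoked identity $1 - \psi_p r^{(N-p)/(p-1)} = \tfrac{N-p}{p-1}\omega_{N-1}^{1/(p-1)} r^{(N-p)/(p-1)} G_{\re^N_+, e_N}$ is correct but does not remove this extra summand, so your ``implicit $W_p$'' is a genuinely different weight (in fact a smaller one) than the theorem's. The fix is to use the transplantation $G_{\re^N_+, e_N}(x,y) = G_{\re^N, O}(\tilde z)$, which gives $h^{-(N-p)/(p-1)} = r^{-(N-p)/(p-1)} - \psi_p$ without the $+1$; then $h^{2(N-p)/(p-1)} = r^{2(N-p)/(p-1)}\bigl[1-\psi_p r^{(N-p)/(p-1)}\bigr]^{-2}$ and the stated denominator appears. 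Equivalently, if you insist on the ball transplantation you must apply the \emph{improved} Hardy inequality $\bigl(\tfrac{N-p}{p}\bigr)^p\int_{B_1^N}\frac{|v|^p}{|z|^p[1-|z|^{(N-p)/(p-1)}]^p}\,dz \le \int_{B_1^N}|\nabla v|^p\,dz$ (itself the ball-transplant of \eqref{H_p} on $\re^N$, cf.\ I.-(ii) in \S \ref{S Harmonic}), not the plain radial Hardy inequality on the ball.
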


\tcr{The proof of Theorem \ref{Thm psi} consists of the use of the harmonic transplantation 
between $\re^N_{+}$ and $\re^N$, the Hardy inequality (\ref{H_p}), and Lemma \ref{Lemma Omega}. 
Since the proof of Theorem \ref{Thm psi} is almost the same as that of Theorem \ref{Thm IH} below,} 
we omit the proof. 

As we mentioned in \S \ref{S Green},
we do not know the explicit form of $\psi_p$ when $2 \not= p < N$.
Due to the lack of the explicit form of $\psi_p$,
we cannot check that the inequality in Theorem \ref{Thm psi} is improved, i.e., $W_p(x,y) \ge 1$.  
Therefore, we consider a modification of harmonic transplantation by using $U_p$ \tcr{in \eqref{def U}} 
instead of $G_{\re^N_+, \tcr{e_N}}$ in (\ref{half-space trans}).
\tcr{This is the main idea we have invented in the proof of Theorem \ref{Thm IH}.}

Consider the following modified transformation for radial functions 
$v = v(z) = v(t)$, $(t= |z| \tcr{\in [0,1]})$ on $B_1^N$, or $w = w(\tilde{z})= w(r)$, $(r= |\tilde{z}| \tcr{\in [0, +\infty)})$ on $\re^N$:
\begin{align}\label{U trans}
\tcr{
	\begin{cases}
	&u(x,y) = v(t), \,\text{where}\,\,U_p (x,y) = G_{B_1^N, O}(t), \quad (x, y) \in \re^N_{+}, \\
	&u(x,y) = w(r), \,\text{where}\,\,U_p (x,y) = G_{\re^N, O}(r), \quad (x, y) \in \re^N_{+}.
	\end{cases}
}
\end{align}
\tcr{We call the function $u$ on $\re^N_{+}$ in \eqref{U trans} {\it the generalized harmonic transplantation} of $v$ (or $w$).} 

We obtain the following Lemma instead of Lemma \ref{Lemma Omega}. 

\begin{lemma}\label{Lemma Omega gene}
Let $1<  p < N, v, w$ be radial functions on $B_1^N, \re^N$, and $u$ be given by (\ref{U trans}). Then we have 
\begin{align*}
	\int_{\re^N_+} |\nabla u |^p 
	&= \int_{B_1^N} |\nabla v|^p + \int_0^1 \w_{N-1} t^{N-1} |v'(t)|^p  F_p \( G_{B_1^N, O} (t)  \)  \,dt\\
	&=\int_{\re^N} |\nabla w|^p + \int_0^\infty \w_{N-1} r^{N-1} |w'(r)|^p  F_p \( G_{\re^N, O}(r) \)  \,dr
\end{align*}
where 
\begin{equation}
\label{F_p}
	F_p(s)= \int_{[U_p > s] } (-\lap_p U_p) \, dxdy.
\end{equation}
Especially, 
\tcr{
if $p \ge 2$ (resp. $p \le 2$), then $F_p (s) \ge 0$ (resp. $F_p(s) \le 0$) 
and $\| \nabla u \|_p \ge \| \nabla v \|_p$, $\| \nabla u \|_p \ge \| \nabla w \|_p$ 
(resp. $\| \nabla u \|_p \le \| \nabla v \|_p$, $\| \nabla u \|_p \le \| \nabla w \|_p$) 
holds.}
\end{lemma}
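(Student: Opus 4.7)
The plan is to follow the scheme of the proof of Lemma \ref{Lemma Omega}, but with the $p$-Green function $G_{\re^N_+, e_N}$ replaced by $U_p$ throughout. Since $U_p$ is no longer a distributional solution of $-\Delta_p u = \delta_{e_N}$ when $p \neq 2$ and $p \neq N$, the step that in the Green-function setting reduces the relevant level-set integral to the constant $1$ will now leave a remainder, and I expect this remainder to be exactly $F_p$.

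Concretely, writing $u(x,y) = v(h(x,y))$ with $h := \( G_{B_1^N, O} \)^{-1} \circ U_p$, I would differentiate \eqref{GB0} to obtain
\begin{align*}
|\nabla h| = \w_{N-1}^{1/(p-1)} h^{(N-1)/(p-1)} |\nabla U_p|.
\end{align*}
Since $U_p \equiv 0$ on $\pd \re^N_+$, we have $h \equiv 1$ on $\pd \re^N_+$, so the level sets $\{h = t\}$ for $t \in (0,1)$ are closed hypersurfaces sitting inside $\re^N_+$. The coarea formula then reduces the gradient integral to
\begin{align*}
\int_{\re^N_+} |\nabla u|^p \, dxdy = \int_0^1 \w_{N-1} t^{N-1} |v'(t)|^p K(t) \, dt,
\end{align*}
where $K(t) := \int_{\{U_p = G_{B_1^N, O}(t)\}} |\nabla U_p|^{p-1} \, d\mathcal{H}^{N-1}$.

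The essential new identity is $K(t) = 1 + F_p\( G_{B_1^N, O}(t) \)$, which I would establish by applying the divergence theorem to the vector field $|\nabla U_p|^{p-2} \nabla U_p$ on the domain $[U_p > s] \setminus B_\ep(e_N)$ with $s = G_{B_1^N, O}(t)$. The piece of boundary on $\{U_p = s\}$ produces $K(t)$ (the outward normal being $-\nabla U_p / |\nabla U_p|$), while the piece on $\pd B_\ep(e_N)$ is exactly the one already computed in the proof of Proposition \ref{Prop sol U} with $\phi \equiv 1$, and contributes $1 + o(1)$ as $\ep \to 0$. Plugging this back yields the first claimed equality; the second, for $w$ on $\re^N$, is obtained verbatim by replacing $G_{B_1^N, O}$ with $G_{\re^N, O}$ and noting that the identical coarea-divergence computation goes through.

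For the sign of $F_p$ I would appeal directly to Proposition \ref{Prop cal U} (I): for $1 < p < N$ one has $N - p > 0$ and $N + p - 2 > 0$, so the bracketed factor there is strictly positive and $|\nabla U_p|^{p-4} U_p$ is nonnegative on $\re^N_+ \setminus \{e_N\}$, whence the pointwise sign of $-\Delta_p U_p$ matches that of $p-2$. Integration over $[U_p > s]$ transfers this sign to $F_p(s)$, and the norm comparisons between $\|\nabla u\|_p$ and $\|\nabla v\|_p$ (respectively $\|\nabla w\|_p$) follow at once from the two gradient identities. The main delicacy is the divergence theorem near the singular point $e_N$, in particular verifying that the boundary integral on $\pd B_\ep(e_N)$ has limit $1$; this is handled by importing the calculation already executed in the proof of Proposition \ref{Prop sol U}.
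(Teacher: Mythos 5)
Your proposal is correct, and it differs from the paper's proof in one genuine (though local) way. Both arguments begin identically: write $u = v \circ h$ with $h = (G_{B_1^N,O})^{-1} \circ U_p$, compute $|\nabla h| = \w_{N-1}^{1/(p-1)} h^{(N-1)/(p-1)} |\nabla U_p|$, and reduce by the coarea formula to showing the key level-set identity
\begin{equation*}
\int_{[U_p = s]} |\nabla U_p|^{p-1} \, d\mathcal{H}^{N-1} = 1 + F_p(s).
\end{equation*}
The difference is in how this identity is obtained. The paper plugs the truncation $\phi_t = \min\{t, U_p\}$ into the weak formulation \eqref{eq:sol U} of Proposition \ref{Prop sol U}, producing the integrated identity $\int_{[U_p < t]} |\nabla U_p|^p = t + \int_{[U_p < t]}(-\lap_p U_p) U_p + t\int_{[U_p > t]}(-\lap_p U_p)$, and then differentiates in $t$, with the coarea formula producing two boundary terms on $\{U_p = t\}$ that cancel because $U_p \equiv t$ there. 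You instead apply the divergence theorem once, directly, to $|\nabla U_p|^{p-2}\nabla U_p$ on $[U_p > s] \setminus B_\ep(e_N)$ and let $\ep \to 0$, importing from the proof of Proposition \ref{Prop sol U} that the $\pd B_\ep$ contribution tends to $1$. Your route is shorter and avoids differentiating an integral identity, at the modest cost of needing the superlevel sets to have sufficiently regular boundary (true for a.e.\ $s$ by Sard, and harmless here since you only need the resulting identity for a.e.\ $t$ inside an integral in $dt$); the paper's route is equivalent in content but pushes the regularity discussion into the differentiation step. The sign analysis via Proposition \ref{Prop cal U} and the concluding norm comparisons are the same in both.
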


\begin{proof}
\tcr{We prove the first equality only, since the proof of the second equality is similar.}
\tcr{Also we note that the proof below is an analogue to that of Lemma \ref{Lemma Omega}.} 
Let $h$ be defined by 
$$
	\tcr{h(x,y) = \(G_{B^N_1, O}\)^{-1} \( U_p(x,y) \)} = \left[ \, \frac{N-p}{p-1} \w_{N-1}^{\frac{1}{p-1}} U_p (x,y) +1  \, \right]^{-\frac{p-1}{N-p}}, 
	\quad \tcr{(x, y) \in \re^N_{+}}.
$$
\tcr{Thus $t = h(x,y)$ is equivalent to $U_p(x,y) = G_{B^N_1, O}(t)$}
and $u(x,y)= v(h(x,y))$.
In particular, $\nabla u(x, y)= v'(h(x,y)) \nabla h(x,y)$.
\tcr{Note that since $U_p \ge 0$ in $\re^N_+$, 
we get that $0 < h(x,y) \le 1$ for $(x, y) \in \ol{\re^N_{+}}$ and} if $(x,y) \in h^{-1}(\{ t\} ) \cap \re^N_{+}$, 
Thus, the coarea formula gives that 
\begin{align*}
	\int_{\re^N_+} |\nabla u |^p 
	&= \int_{\re^N_+} |v'(h(x,y))|^p |\nabla h (x, y)|^{p-1} |\nabla h (x,y)| \,dxdy\\
	&=\int_0^1 \left[ \, \int_{h^{-1}(\{ t\} ) \cap \re^N_+} |v'(h(x,y))|^p |\nabla h (x,y)|^{p-1} \,d \mathcal{H}^{N-1}(x,y) \,\right] \,dt.
\end{align*}
\tcr{Inserting} $|\nabla h| = \w_{N-1}^{\frac{1}{p-1}} h(y)^{\frac{N-1}{p-1}} |\nabla U_p (x,y)|$, 
\tcr{we have}
\begin{align}
\label{LL1}
	\int_{\re^N_+} |\nabla u |^p 
	=\int_0^1 \w_{N-1} t^{N-1} |v'(t)|^p
	\left[ \, \int_{h^{-1}(\{ t\} ) \cap \re^N_+} |\nabla U_p (x, y)|^{p-1} \,d \mathcal{H}^{N-1}(x,y) \,\right] \,dt.
\end{align}
Note that $h^{-1}(\{ t\} ) \cap \re^N_+$ is a level set of $U_p$. 
Applying $\phi_t = {\rm min} \{ t, U_p \}$, $(t >0)$ as a test function \tcr{of \eqref{eq:sol U}} in Proposition \ref{Prop sol U},
\tcr{we have}
\begin{align*}
	\int_{[U_p < t]} | \nabla U_p |^{p} \, dxdy 
	= t + \int_{[\tcr{U_p} < t] } (-\lap_p U_p)\, U_p\, dxdy + \int_{[U_p > t] } (-\lap_p U_p)\,t \, dxdy.
\end{align*}
If we differentiate the above with respect to $t$, then we have
\begin{align*}
	&\int_{[U_p = t]} | \nabla U_p |^{p-1} \, d \mathcal{H}^{N-1}(x,y)  \\
	&= 1 + \int_{[U_p = t] } (-\lap_p U_p)\, \frac{U_p}{|\nabla U_p|}\, d \mathcal{H}^{N-1}(x,y) 
	- \int_{[U_p = t] } (-\lap_p U_p)\,\frac{t}{|\nabla U_p|} \, d \mathcal{H}^{N-1}(x,y) \\
	&\hspace{2em} + \int_{[U_p > t] } (-\lap_p U_p) \, dxdy \\
	&= 1 +  \int_{[U_p > t] } (-\lap_p U_p) \, dxdy
\end{align*}
thanks to the coarea formula.  
Therefore, \tcr{replacing $t$ by $G_{B^N_1, O}(t)$} for any $t \in (0,1)$, 
we have
\begin{align}
\label{LL2}
	\int_{h^{-1}(\{ t\} ) \cap \re^N_+}  |\nabla U_p (x,y)|^{p-1} \,d \mathcal{H}^{N-1}(x,y) 
	= 1 +  \int_{ \left[ U_p > G_{B^N_1, O}(t) \right] } (-\lap_p U_p) \, dxdy. 
\end{align}
\tcr{Inserting \eqref{LL2} into \eqref{LL1}, we obtain}
\begin{align*}
	&\int_{\re^N_+} |\nabla u |^p 
	= \int_0^1 \w_{N-1} t^{N-1} |v'(t)|^p \( 1+ \int_{[U_p > t] } (-\lap_p U_p) \, dxdy\) \,dt \\
	&= \int_{B_1^N} |\nabla v|^p + \int_0^1 \w_{N-1} t^{N-1} |v'(t)|^p  
	F_p \( \w_{N-1}^{-\frac{1}{p-1}} \frac{p-1}{N-p} \( t^{-\frac{N-p}{p-1}} -1  \) \) \,dt.
\end{align*}
\end{proof}


\begin{lemma}\label{Lemma IH gene}
Let $1<  p < N$, $0 \le s \le p$, \tcr{$v$ and $w$} be radial functions on $B_1^N$ and $\re^N$ \tcr{respectively}, 
and \tcr{let} $u$ be given by (\ref{U trans}). 
Then we have 
\begin{align*}
	&\int_{\re^N_+} \frac{V_p(x,y)^{\frac{p}{2}} }{\( |x|^2 +(1-y)^2 \)^{\frac{p}{2}}} |u(x,y)|^{p} \,dxdy \\
	&=\int_{B_1^N} \frac{|v|^{p}}{|z|^p \left[ 1-|z|^{\frac{N-p}{p-1}} \right]^p} \,dz
	+ \w_{N-1} \int_0^1  |v|^{p} t^{N-1-p} \left[ 1-t^{\frac{N-p}{p-1}} \right]^{-p} F_p \( G_{B_1^N, O}(t) \) \,dr \\
	&= \int_{\re^N} \frac{|w|^{p}}{|\tilde{z}|^p}\,d\tilde{z} 
	+ \w_{N-1} \int_0^\infty  |w|^{p} r^{N-1-p} F_p \( G_{\re^N, O}(r) \) \,dr.
\end{align*}
where \tcr{$F_p(s)$ is defined in \eqref{F_p},}
and for $(x,y) \in \re^N_+$, \tcbb{$V_p$ and $X$ are defined in \eqref{V_p}.}
\end{lemma}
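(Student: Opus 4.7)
The plan is to reduce the integral on $\re^N_{+}$ to a one-dimensional integral by exploiting that $u$ depends only on $U_p$ (via $h$, respectively via the analogous $\re^N$ transformation), following the template of Lemma \ref{Lemma Omega gene}. The whole argument will hinge on recognizing that the complicated-looking weight $V_p^{p/2}(|x|^2+(1-y)^2)^{-p/2}$ is in fact nothing but $\left(\frac{p-1}{N-p}\right)^p |\nabla U_p|^p / U_p^p$. Once this identification is made, the coarea formula together with the formula from the proof of Lemma \ref{Lemma Omega gene} finishes the job.

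First I would establish the key pointwise identity
\begin{equation*}
	V_p(x,y) = \left(\frac{p-1}{N-p}\right)^2 \frac{|\nabla U_p(x,y)|^2\, (|x|^2+(y-1)^2)}{U_p(x,y)^2},
\end{equation*}
equivalently $V_p^{p/2} (|x|^2+(1-y)^2)^{-p/2} = \left(\frac{p-1}{N-p}\right)^p |\nabla U_p|^p U_p^{-p}$. This is verified by two algebraic observations. On the one hand, Proposition \ref{Prop cal U} (or a direct computation starting from the expression of $\nabla U_p$) shows that the numerator of $V_p$ coincides with $\omega_{N-1}^{2/(p-1)} |\nabla U_p|^2 (|x|^2+(y-1)^2)^{(N-1)/(p-1)}$. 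On the other hand, factoring $(|x|^2+(1-y)^2)^{-(N-p)/(2(p-1))}$ out of $U_p$ gives $[1-X^{(N-p)/(2(p-1))}] = \frac{N-p}{p-1}\omega_{N-1}^{1/(p-1)} U_p (|x|^2+(1-y)^2)^{(N-p)/(2(p-1))}$, so the denominator of $V_p$ equals $\left(\frac{N-p}{p-1}\right)^2 \omega_{N-1}^{2/(p-1)} U_p^2 (|x|^2+(1-y)^2)^{(N-p)/(p-1)}$. Combining and using $(N-1)/(p-1)-(N-p)/(p-1)=1$ yields the identity.

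Next I would apply the coarea formula as in Lemma \ref{Lemma Omega gene}. With $u(x,y)=v(h(x,y))$ and $h(x,y)=(G_{B_1^N,O})^{-1}(U_p(x,y))$, $h^{-1}(\{t\})\cap\re^N_{+}$ is a level set of $U_p$ at height $s = G_{B_1^N,O}(t)$. The identity from Step~1 gives on that level set
\begin{equation*}
	\frac{V_p^{p/2}}{(|x|^2+(1-y)^2)^{p/2}|\nabla U_p|} = \left(\frac{p-1}{N-p}\right)^p \frac{|\nabla U_p|^{p-1}}{s^p}\,,
\end{equation*}
which has no residual $(x,y)$-dependence beyond $|\nabla U_p|^{p-1}$. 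Using the identity
\begin{equation*}
	\int_{[U_p=s]}|\nabla U_p|^{p-1}\,d\mathcal{H}^{N-1} = 1 + F_p(s),
\end{equation*}
proved in Lemma \ref{Lemma Omega gene} via the test function $\phi_t=\min\{t,U_p\}$ in Proposition \ref{Prop sol U}, the coarea formula collapses the integral to $\left(\frac{p-1}{N-p}\right)^p\int_0^\infty |v(t(s))|^p s^{-p}[1+F_p(s)]\,ds$. Changing variables $s=G_{B_1^N,O}(t)$, using $G_{B_1^N,O}(t) = \frac{p-1}{N-p}\omega_{N-1}^{-1/(p-1)} t^{-(N-p)/(p-1)}[1-t^{(N-p)/(p-1)}]$, and tracking exponents (a routine check shows the $t$-exponent lands on $N-1-p$), converts the expression into $\omega_{N-1}\int_0^1 |v(t)|^p t^{N-1-p}[1-t^{(N-p)/(p-1)}]^{-p}[1+F_p(G_{B_1^N,O}(t))]\,dt$, which splits into the two summands in the first equality after recognizing the unweighted piece as the polar-coordinate form of $\int_{B_1^N}|v|^p |z|^{-p}[1-|z|^{(N-p)/(p-1)}]^{-p}dz$. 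The second equality is obtained in the identical way, now with $U_p = G_{\re^N,O}(r) = \frac{p-1}{N-p}\omega_{N-1}^{-1/(p-1)} r^{-(N-p)/(p-1)}$, so that no $[1-\cdot]$ factor survives and the unweighted piece recovers $\int_{\re^N}|w|^p |\tilde z|^{-p} d\tilde z$.

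The main obstacle is the identity of Step~1: all of the structure of $V_p$ must be matched against $|\nabla U_p|^2$ and $U_p^2$ with the correct exponents of $(|x|^2+(y\pm1)^2)$. Once this identification is made, the rest is a direct application of the coarea formula and the level-set flux identity already used in Lemma \ref{Lemma Omega gene}, together with the explicit change of variables from $s$ to $t$ or $r$.
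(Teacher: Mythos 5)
Your proof is correct and follows essentially the same approach as the paper's: both reduce the integral via the coarea formula on level sets, invoke the flux identity $\int_{[U_p = s]}|\nabla U_p|^{p-1}\,d\mathcal{H}^{N-1} = 1 + F_p(s)$ coming from the test function $\phi_t = \min\{t, U_p\}$ in Proposition \ref{Prop sol U}, and hinge on the algebraic identity showing that the weight $V_p^{p/2}(|x|^2+(1-y)^2)^{-p/2}$ equals $\left(\tfrac{p-1}{N-p}\right)^p |\nabla U_p|^p/U_p^p$ (which is precisely $|\nabla h|^p/h^p$ in the paper's notation, since $|\nabla h|/h = \tfrac{p-1}{N-p}\,|\nabla U_p|/U_p$). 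The only difference is cosmetic: the paper parametrizes level sets by $h$ and verifies the identity for $|\nabla h|^p/h^p$ at the end, whereas you verify the equivalent identity for $|\nabla U_p|^p/U_p^p$ up front and then parametrize by $U_p$; the computations and logical structure are identical.
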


\begin{proof}
\tcr{We prove the second equality for $v$ only. The first equality is similar.}
\tcr{For $(x, y) \in \re^N_{+}$, define $h(x,y)$ by the relation}
\begin{align*}
	\tcr{U_p (x, y) = G_{\re^N, O}(h(x,y))} = \frac{p-1}{N-p} \w_{N-1}^{-\frac{1}{p-1}} h(x,y)^{-\frac{N-p}{p-1}},
\end{align*}
\tcr{that is,}
\begin{align*}
	h(x,y) = \left[ \,\( |x|^2 +(1-y)^2 \)^{-\frac{N-p}{2(p-1)}} - \( |x|^2 +(1+y)^2 \)^{-\frac{N-p}{2(p-1)}} \,\right]^{\frac{p-1}{p-N}}.
\end{align*}
\tcr{As in \eqref{LL2}, we can obtain}
\begin{align}
\label{LL3}
	\tcr{\int_{h^{-1}(\{ t\} ) \cap \re^N_+}  |\nabla U_p (x,y)|^{p-1} \,d \mathcal{H}^{N-1}(x,y) 
	= 1 +  \int_{ \left[ U_p > G_{\re^N, O}(t) \right] } (-\lap_p U_p) \, dxdy.} 
\end{align}
\tcr{Thus by the coarea formula and (\ref{LL3}),} we have
\begin{align}
\label{LL4}
	&\int_{\re^N_+} |u|^{p}  \frac{|\nabla h(x,y)|^p}{h(x,y)^p} \,dx dy
	= \int_0^\infty \int_{h^{-1}(\{ r\} ) \cap \re^N_+} |w|^{p} r^{-p} |\nabla h (x,y)|^{p-1} \,d \mathcal{H}^{N-1}(x,y) \,dr  \\
	&= \,\w_{N-1} \int_0^\infty  |w|^{p} r^{N-1-p} \int_{h^{-1}(\{ r \} ) \cap \re^N_+} 
	|\nabla U_p (\tcr{x,y})|^{p-1} \,d \mathcal{H}^{N-1}(x,y) \,dr \notag \\
	&\overset{\eqref{LL3}}{=} \w_{N-1} \int_0^\infty |w|^{p} r^{N-1-p} \,dr \tcr{+} \w_{N-1} \int_0^\infty  |w|^{p} r^{N-1-p} 
	F_p \( \tcr{G_{\re^N, O} (r)} \) \,dr \notag \\
	&=\int_{\re^N} \frac{|w|^{p}}{|\tilde{z}|^p}\,d\tilde{z}
	+ \w_{N-1} \int_0^\infty  |w|^{p} r^{N-1-p} F_p \( G_{\re^N, O} (r) \)   \,dr, \notag
\end{align}
where $|\tilde{z}| = r=h(x,y)$.
On the other hand, we have
\begin{align*}
	\nabla h(x,y) =  h(x,y)^{\frac{N-1}{p-1}} \left[ \,  \( |x|^2 +(1-y)^2 \)^{\frac{p-N}{2(p-1)}-1}
	\begin{pmatrix}
	x \quad \\
	y-1
	\end{pmatrix}
	- \( |x|^2 +(1+y)^2 \)^{\frac{p-N}{2(p-1)}-1}
	\begin{pmatrix}
	x \quad \\
	y+1
	\end{pmatrix}
	\,\right]
\end{align*}
which implies that
\begin{align*}
	&\frac{|\nabla h(x,y) |^p}{h(x,y)^p}
	=  h(x,y)^{\frac{N-p}{p-1}p} 
	\left| \,  \( |x|^2 +(1-y)^2 \)^{\frac{p-N}{2(p-1)}-1}
	\begin{pmatrix}
	x \quad \\
	y-1
	\end{pmatrix}
	- \( |x|^2 +(1+y)^2 \)^{\frac{p-N}{2(p-1)}-1}
	\begin{pmatrix}
	x \quad \\
	y+1
	\end{pmatrix}
	\,\right|^{p}\\
	&=\frac{
	\left[ \,  \( |x|^2 +(1-y)^2 \)^{\frac{1-N}{p-1}} 
	+ \( |x|^2 +(1+y)^2 \)^{\frac{1-N}{p-1}} -2 \( |x|^2 +(1-y)^2 \)^{\frac{2-p-N}{2(p-1)}} 
	\( |x|^2 +(1+y)^2 \)^{\frac{2-p-N}{2(p-1)}} (|x|^2 +y^2 -1)  \,\right]^{\frac{p}{2}}
	}{
	\left[ \,  \( |x|^2 +(1-y)^2 \)^{-\frac{N-p}{2(p-1)}} - \( |x|^2 +(1+y)^2 \)^{-\frac{N-p}{2(p-1)}} \,\right]^{p} }\\
	&= \frac{\left[ 1+ X^{\frac{N-1}{p-1}} -2X^{\frac{N-p}{2(p-1)}} \( |x|^2 +(1+y)^2 \)^{\tcrr{-1}} 
	(|x|^2 +y^2 -1) \right]^{\frac{p}{2}}}{ \left[ |x|^2 +(1-y)^2 \right]^{\frac{p}{2}} \left[ 1-X^{\frac{N-p}{2(p-1)}} \right]^p} \\
	&= \frac{V_p(x,y)^{\frac{p}{2}}}{ \left[ |x|^2 +(1-y)^2 \right]^{\frac{p}{2}} }.
\end{align*}
\tcr{Inserting this in the left hand-side of \eqref{LL4}, we obtain the second equality of} Lemma \ref{Lemma IH gene}. 
\end{proof}


\tcr{Now, we prove Theorem \ref{Thm IH}.}
\tcr{First, we claim the next lemma.}
\begin{lemma}
\label{Lemma:claim 2}
Let $2 \le p \le N$ and let $F_p$ be defined in \eqref{F_p}.
Then
\begin{align}\label{claim 2}
	\( \frac{N-p}{p} \)^p \int_0^\infty  |w|^{p} r^{N-1-p} F_p \( G_{\re^N, O} (r)  \) \,dr 
	\le \int_0^\infty  |w'|^{p} r^{N-1} F_p \( G_{\re^N, O} (r)  \)  \,dr
\end{align}
holds for any radial function $w = w(r) \in C_c^1(\re^N)$.
\end{lemma}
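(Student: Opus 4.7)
The plan is to prove \eqref{claim 2} as a one-dimensional weighted Hardy inequality on $(0, \infty)$ with weight $F(r) := F_p(G_{\re^N, O}(r))$, by integration by parts followed by H\"older. The crucial structural fact is the monotonicity of $F$, which comes directly from Proposition \ref{Prop cal U}.

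First, I would observe that $F$ is non-decreasing in $r$. Indeed, as $s$ increases, the superlevel set $[U_p > s]$ shrinks, and for $p \in [2, N)$ the integrand $-\Delta_p U_p$ is non-negative by Proposition \ref{Prop cal U}, so $s \mapsto F_p(s)$ is non-increasing. Since $G_{\re^N, O}(r) = \frac{p-1}{N-p}\omega_{N-1}^{-1/(p-1)} r^{-(N-p)/(p-1)}$ is strictly decreasing in $r$, the composition $F(r) = F_p(G_{\re^N, O}(r))$ is non-decreasing, and hence its distributional derivative $dF$ is a non-negative Radon measure on $(0, \infty)$.

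Next, I would rewrite $r^{N-1-p} = \frac{1}{N-p}(r^{N-p})'$ and integrate by parts against the BV function $F$:
\begin{align*}
	\int_0^\infty |w|^p r^{N-1-p} F(r)\, dr
	&= \frac{1}{N-p}\int_0^\infty |w|^p (r^{N-p})' F(r)\, dr \\
	&= -\frac{1}{N-p}\int_0^\infty (|w|^p)' r^{N-p} F(r)\, dr - \frac{1}{N-p}\int_0^\infty |w|^p r^{N-p}\, dF(r).
\end{align*}
Since $dF \ge 0$, the last term is non-positive and can be dropped. Using $(|w|^p)' = p|w|^{p-2}ww'$ and H\"older's inequality with the natural splitting $|w|^{p-1}F^{(p-1)/p}r^{(N-1-p)(p-1)/p}$ and $|w'|F^{1/p}r^{(N-1)/p}$ (which yield the correct weights when raised to $p/(p-1)$ and $p$ respectively), I obtain
\begin{align*}
	\int_0^\infty |w|^p r^{N-1-p} F(r)\, dr
	&\le \frac{p}{N-p}\int_0^\infty |w|^{p-1}|w'| r^{N-p} F(r)\, dr \\
	&\le \frac{p}{N-p}\left(\int_0^\infty |w|^p F(r) r^{N-1-p}\, dr\right)^{\!\!(p-1)/p}\!\!\left(\int_0^\infty |w'|^p F(r) r^{N-1}\, dr\right)^{\!\!1/p}\!\!,
\end{align*}
and dividing and raising to the $p$-th power yields \eqref{claim 2}.

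The main technical point to verify is the vanishing of the boundary terms in the integration by parts, specifically $r^{N-p} F(r) |w(r)|^p \to 0$ as $r \to 0$ (the behaviour at $r = \infty$ is trivial since $w$ has compact support). For this I would use the explicit expression for $-\Delta_p U_p$ in Proposition \ref{Prop cal U}(I), which near the singularity $e_N$ gives $-\Delta_p U_p = O(|z-e_N|^{-(N-1)(p-2)/(p-1)})$; integrating over the small superlevel set $[U_p > s]$ shows $F_p(s) \to 0$ as $s \to \infty$ at a polynomial rate, whence $F(r) = O(r^{(N+p-2)/(p-1)})$ near $r=0$, which is more than enough to kill $r^{N-p}|w(r)|^p$. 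Once these decay estimates are in place, the whole argument reduces to the classical radial Hardy inequality on $\re^N$ with an extra non-positive correction that we simply throw away.
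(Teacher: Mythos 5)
Your proof is correct and essentially identical to the paper's: both integrate $r^{N-1-p} = \frac{1}{N-p}(r^{N-p})'$ against the non-negative, non-decreasing weight $F(r) = F_p(G_{\re^N,O}(r))$, discard the resulting non-positive term coming from $dF \ge 0$ (the paper phrases this via $F_p' \le 0$ and $G_{\re^N,O}' \le 0$), and close with the same H\"older splitting and absorption. You additionally verify that the boundary terms in the integration by parts vanish and quantify the decay rate $F(r) = O(r^{(N+p-2)/(p-1)})$ near $r=0$, which the paper leaves implicit; this is a welcome bit of extra rigor, although since $N-p>0$, $F(0^+)=0$, and $w$ is bounded, a qualitative observation already suffices.
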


\begin{proof}
Since for $p \ge 2$,
\begin{align*}
	F'_p (s) = - \int_{[U_p=s]} \frac{-\lap_p U_p}{ |\nabla U_p|} d\mathcal{H}^{N-1} (x,y) \le 0
\end{align*}
\tcr{by Proposition \ref{Prop cal U}} and $G^{'}_{\re^N, O} (r) \le 0$, 
we have 
\begin{align*}
	&\int_0^\infty  |w|^{p} r^{N-1-p} F_p \( G_{\re^N, O} (r) \)   \,dr \\
	&= -\frac{p}{N-p} \int_0^\infty  |w|^{p-2} w w' r^{N-p} F_p \( G_{\re^N, O} (r) \) \,dr 
	- \frac{1}{N-p} \int_0^\infty  |w|^{p} r^{N-p} F^{'}_p \( G_{\re^N, O} (r) \) G^{'}_{\re^N, O} (r) \,dr \\
	&\le -\frac{p}{N-p} \int_0^\infty  |w|^{p-2} w w' r^{N-p} F_p \( G_{\re^N, O} (r) \)   \,dr \\
	&\le \frac{p}{N-p} \( \int_0^\infty  |w'|^{p} r^{N-1} F_p \( G_{\re^N, O} (r) \) \,dr \)^{\frac{1}{p}} 
	\( \int_0^\infty  |w|^p r^{N-1-p} F_p \( G_{\re^N, O} (r) \)   \,dr \)^{\frac{p-1}{p}}.
\end{align*}
Therefore, we obtain (\ref{claim 2}).
\end{proof}

\begin{proof}(Proof of Theorem \ref{Thm IH})
From Lemma \ref{Lemma IH gene}, \eqref{claim 2}, the classical Hardy inequality (\ref{H_p}) on $\re^N$, and Lemma \ref{Lemma Omega gene},
we have
\begin{align*}
	&\( \frac{N-p}{p} \)^p \int_{\re^N_+} \frac{|u(x,y)|^{p}}{\( |x|^2 +(1-y)^2 \)^{\frac{p}{2}}} V_p(x,y)^{\frac{p}{2}} \,dxdy \\
	&\overset{{\rm Lemma} \, \ref{Lemma IH gene}}{=} \( \frac{N-p}{p} \)^p\int_{\re^N} \frac{|w|^{p}}{|\tilde{z}|^p}\,d\tilde{z} 
	+ \( \frac{N-p}{p} \)^p\w_{N-1} \int_0^\infty  |w|^{p} r^{N-1-p} F_p \( G_{\re^N, O}(r) \) \,dr \\
	&\overset{\eqref{claim 2}}{\le} \( \frac{N-p}{p} \)^p\int_{\re^N} \frac{|w|^{p}}{|\tilde{z}|^p}\,d\tilde{z} 
	+ \int_0^\infty  |w'|^{p} r^{N-1} F_p \( G_{\re^N, O} (r) \)   \,dr \\
	&\overset{\eqref{H_p}}{<} \int_{\re^N} |\nabla w|^p\,d\tilde{z} 
	+ \int_0^\infty  |w'|^{p} r^{N-1} F_p \( G_{\re^N, O} (r)  \)  \,dr \\ 
	&\overset{{\rm Lemma} \, \ref{Lemma Omega gene}}{=} \int_{\re^N_+} |\nabla u (x,y) |^p \, dxdy.
\end{align*}
\tcr{Thus the inequality \eqref{IH}
\begin{align*}
	&\( \frac{N-p}{p} \)^p\int_{\re^N_+} \frac{|u(x,y)|^p}{\( |x|^2 +(1-y)^2 \)^{\frac{p}{2}}} V_p(x,y)^{\frac{p}{2}} \,dxdy 
	< \int_{\re^N_+} |\nabla u (x,y)|^p \, dxdy.
\end{align*}
is proven.}

The remaining is to show the optimality of the constant $\( \frac{N-p}{p} \)^p$ in the above inequality. 
For large $M >0$ and small $\ep >0$, consider the following test function:
\begin{align*}
	u_{\ep, M} (x,y)= U_p (x,y)^{\frac{p-1}{p} - \frac{p-1}{N-p} \ep} \psi_M \( U_p (x,y) \)
\end{align*}
where $U_p (x,y)$ \tcr{is in \eqref{def U}.}
\tcr{Put} 
$$
	C(N,p) = \frac{p-1}{N-p} \w_{N-1}^{-\frac{1}{p-1}}
$$ 
and define $\psi_M \in C^\infty (0,\infty)$, $0\le \psi_M \le 1$, $\psi_M (s) =1$ for $s \ge M$, $\psi_M (s) =0$ for $s \le \frac{M}{2}$. 
Let $\delta_1 = \delta_1 (M)$, $\delta_2 =\delta_2(M) >0$ satisfy 
\begin{align*}
	\delta_1^{-\frac{N-p}{p-1}} - (2-\delta_1)^{-\frac{N-p}{p-1}} = C(N,p)^{-1} M, \quad
	\delta_2 = \( \frac{M}{C(N,p)} \)^{\frac{p-1}{N-p}}. 
\end{align*}
Then we have $B_{\delta_1} = B_{\delta_1}(\tcr{0,1}) \subset [U_p \ge M] \subset B_{\delta_2}$. 
Then we have
\begin{align}\label{deno}
	&\int_{\re^N_+} \frac{|u_{\ep, M}(x,y)|^{p} V_p(x,y)^{\frac{p}{2}}}{\( |x|^2 +(1-y)^2 \)^{\frac{p}{2}}}  \,dxdy 
	\ge \int_{[U_p \ge M]}  \frac{|U_p|^{p-1-\frac{p-1}{N-p} p\ep} }{\( |x|^2 +(1-y)^2 \)^{\frac{p}{2}}}  \,dxdy \notag \\
	&\ge C(N,p)^{p-1-\frac{p-1}{N-p} p\ep} \int_{B_{\delta_1}} \( |x|^2 +(1-y)^2 \)^{-\frac{N}{2} + \frac{p\ep}{2} } 
	\left[ 1-\(  \frac{|x|^2 +(y-1)^2}{|x|^2 + (y+1)^2} \)^{\frac{N-p}{2(p-1)}}  \right]^{p-1} \, dxdy \notag \\
	&\ge C(N,p)^{p-1-\frac{p-1}{N-p} p\ep} \int_{B_{\delta_1}} \( |x|^2 +(1-y)^2 \)^{-\frac{N}{2} + \frac{p\ep}{2} } 
	\left[ 1- (p-1) \(  \frac{|x|^2 +(y-1)^2}{|x|^2 + (y+1)^2} \)^{\frac{N-p}{2(p-1)}}  \right] \, dxdy \notag \\
	&\ge C(N,p)^{p-1-\frac{p-1}{N-p} p\ep} \w_{N-1} \left[ \int_0^{\delta_1} r^{-1+ p\ep} \,dr - \frac{p-1}{(2 -\delta_1)^{\frac{N-p}{p-1}}} 
	\int_0^{\delta_1} r^{-1 + p\ep + \frac{N-p}{p-1}} \,dr \right] \notag \\
	&= \frac{C(N,p)^{p-1}}{p} \w_{N-1} \ep^{-1} + o(\ep^{-1}) \quad (\ep \to 0).
\end{align}
Since 
\begin{align*}
	\nabla u_{\ep, M} = \( \frac{p-1}{p}  -\frac{p-1}{N-p} \ep \) U_p^{-\frac{1}{p} -\frac{p-1}{N-p} \ep} \, (\nabla U_p ) \, \psi_M 
	+ \psi^{'}_M \, U_p^{1-\frac{1}{p} -\frac{p-1}{N-p} \ep} \, (\nabla U_p )
\end{align*}
\tcr{and $(a + b)^p \le a^p + p a^{p-1}b$ for $a, b \ge 0$},
we have
\begin{align*}
	&|\nabla u_{\ep, M} |^p \\
	&\le \( \frac{p-1}{p}  -\frac{p-1}{N-p} \ep \)^p U_p^{-1 -\frac{p-1}{N-p} p \ep} \, |\nabla U_p |^p  
	+ p \( \frac{p-1}{p} \)^{p-1} \psi^{'}_M \, U_p^{-\frac{p-1}{N-p} \ep  -\frac{(p-1)^2}{N-p} \ep} \, |\nabla U_p |^p.
\end{align*}
Then we have
\begin{align}\label{nume}
	&\int_{\re^N_+} |\nabla u_{\ep, M}(x,y)|^{p} \,dxdy \notag \\
	&\le \int_{[U_p \ge M]}  \( \frac{p-1}{p}  -\frac{p-1}{N-p} \ep \)^p U_p^{-1 -\frac{p-1}{N-p} p \ep} \, |\nabla U_p |^p \,dxdy 
	+ \int_{[\frac{M}{2} \le U_p \le M]} |\nabla u_{\ep, M}|^{p} \,dxdy \notag \\
	&\le \( \frac{p-1}{p} \)^p C(N,p)^{-1 -\frac{p-1}{N-p}p\ep} \w_{N-1}^{-\frac{p}{p-1}} 
	\int_{B_{\delta_2}} \( |x|^2 +(1-y)^2 \)^{-\frac{N}{2} + \frac{p\ep}{2} } \tcr{\times} \notag \\
	&\hspace{1em} \Biggl[1 + \left\{  \frac{|x|^2 + (y-1)^2}{(2-\delta_2)^2} \right\}^{\frac{N-p}{p-1} +1}
	+ 2 \left\{  \frac{|x|^2 + (y-1)^2}{(2-\delta_2)^2} \right\}^{\frac{N-p}{2(p-1)} } 
	\frac{ \( |x|^2 +y^2 -1 \)_-}{(2-\delta_2)^2} \Biggl] \,dxdy +o(\ep^{-1}) \notag \\
	&\le \( \frac{p-1}{p} \)^p C(N,p)^{-1 -\frac{p-1}{N-p}p\ep} \w_{N-1}^{-\frac{p}{p-1}+1} 
	\int_0^{\delta_2} r^{-1 + p\ep} \, dr  +o(\ep^{-1}) \notag \\
	&= \( \frac{p-1}{p} \)^p \frac{C(N,p)^{-1}}{p} \w_{N-1}^{-\frac{1}{p-1}} \ep^{-1} + o(\ep^{-1}) \quad (\ep \to 0),
\end{align}
where $(\,f(x) \,)_- := \max \{ 0, \, -f(x) \}$. 
From (\ref{deno}) and (\ref{nume}), we have
\begin{align*}
	\frac{\int_{\re^N_+} |\nabla u_{\ep, M}(x,y)|^{p} \,dxdy}{\int_{\re^N_+} \frac{|u_{\ep, M}(x,y)|^{p} V_p(x,y)^{\frac{p}{2}}}
	{\( |x|^2 +(1-y)^2 \)^{\frac{p}{2}}}  \,dxdy } 
	&\le \frac{\( \frac{p-1}{p} \)^p \frac{C(N,p)^{-1}}{p} \w_{N-1}^{-\frac{1}{p-1}} \ep^{-1} + o(\ep^{-1})}
	{\frac{C(N,p)^{p-1}}{p} \w_{N-1} \ep^{-1} + o(\ep^{-1})}  \\
	&= \( \frac{p-1}{p\,C(N,p)} \)^p \w_{N-1}^{-\frac{p}{p-1}}+ o(1) \\
	&= \( \frac{N-p}{p} \)^{p} + o(1) \quad (\ep \to 0).
\end{align*}
Therefore, the constant $\( \frac{N-p}{p} \)^p$ in the inequality (\ref{IH}) is optimal. 
\end{proof}


\tcrr{As we mention in Remark \ref{Rem without sym}, the improved inequality (\ref{IH}) is valid for functions without any symmetry by using Proposition \ref{Prop sol U} and a result in \cite{DD}. }

\begin{theorem}\label{Thm IH without sym}
Let $2 \le p < N$. 
Then the inequality
\begin{align}
\label{IH}
	\( \frac{N-p}{p} \)^p \int_{\re^N_+} \frac{V_p(x,y)^{\frac{p}{2}}}{\( |x|^2 +(1-y)^2 \)^{\frac{p}{2}}} |u(x,y)|^p \,dxdy 
	&\le  \int_{\re^N_+} \left| \nabla u(x,y)  \right|^p \,dxdy
\end{align}
holds for any $u \in \dot{W}_0^{1,p} (\re^N_+)$, 
where \tcbb{$V_p$ and $X$ are defined in \eqref{V_p}.} 
Furthermore, $(\frac{N-p}{p})^p$ is the best constant and is not attained.
\end{theorem}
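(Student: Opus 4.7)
The plan is to recast the inequality as a ``virtual ground state'' Hardy inequality in which $U_p$ plays the role of the virtual ground state, and then to invoke the general Picone-based principle of D'Ambrosio--Dipierro \cite{DD}. The starting observation, already present in the computation in the proof of Lemma \ref{Lemma IH gene}, is the clean identity
\begin{equation*}
\frac{V_p(x,y)^{p/2}}{\left(|x|^2+(1-y)^2\right)^{p/2}} = \left(\frac{p-1}{N-p}\right)^p \frac{|\nabla U_p(x,y)|^p}{U_p(x,y)^p}.
\end{equation*}
Indeed, in that proof the left side arises as $|\nabla h|^p/h^p$, where $h$ and $U_p$ are related by $U_p = G_{\re^N,O}(h) = C(N,p)\,h^{-(N-p)/(p-1)}$; logarithmic differentiation then yields $|\nabla h|/h = \tfrac{p-1}{N-p}\,|\nabla U_p|/U_p$. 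With this identity, \eqref{IH} is equivalent to the virtual ground state Hardy inequality
\begin{equation*}
\left(\frac{p-1}{p}\right)^p \int_{\re^N_+}\frac{|\nabla U_p|^p}{U_p^p}\,|u|^p\,dxdy \le \int_{\re^N_+}|\nabla u|^p\,dxdy.
\end{equation*}

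The second step is to invoke the following principle, which forms the backbone of \cite{DD}: if $\phi>0$ is $p$-superharmonic on an open set $\Omega\subset\re^N$ in the distributional sense, then
\begin{equation*}
\left(\frac{p-1}{p}\right)^p \int_\Omega \frac{|\nabla\phi|^p}{\phi^p}\,|u|^p\,dz \le \int_\Omega|\nabla u|^p\,dz
\end{equation*}
holds for every $u\in\dot{W}_0^{1,p}(\Omega)$. The mechanism is a $p$-Picone type pointwise inequality combined with a test of $-\Delta_p\phi\ge 0$ against an appropriate power of $|u|/\phi$. In our situation $\phi=U_p$ is strictly positive on $\re^N_+$, and Proposition \ref{Prop sol U} supplies exactly the required superharmonicity for $p\in[2,N)$: the delta mass at $e_N$ in \eqref{eq:sol U} has a favorable sign and only strengthens the estimate, so no cancellation occurs near the pole. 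This yields the inequality \eqref{IH} for every $u\in\dot{W}_0^{1,p}(\re^N_+)$.

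Optimality of the constant $(\frac{N-p}{p})^p$ is then inherited from Theorem \ref{Thm IH}: the $U_p$-symmetric test functions $u_{\ep,M}$ constructed there already lie in $\dot{W}_0^{1,p}(\re^N_+)$, and the quotient of their $L^p$-gradient norm and weighted $L^p$-norm was shown to converge to $(\frac{N-p}{p})^p$. Non-attainment follows from the rigidity of the Picone inequality: equality would force $u$ to be a positive constant multiple of $U_p$, but $U_p\notin\dot{W}_0^{1,p}(\re^N_+)$ because of the singularity at $e_N$ and the slow decay at infinity.

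The main technical obstacle, in my view, is justifying the Picone integration by parts in a fully rigorous way, given that $U_p$ is singular at $e_N$, vanishes on $\partial\re^N_+$, and decays only slowly at infinity. The standard remedy is to approximate $u$ by smooth compactly supported functions avoiding a neighborhood of $e_N$, and to replace $U_p$ by $U_p+\delta$ (or a suitable truncation) before passing to the limit, relying on the signed distributional inequality of Proposition \ref{Prop sol U} to control the sign of the remainder.
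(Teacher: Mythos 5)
Your proposal matches the paper's own proof: the authors likewise identify $\bigl(\tfrac{p-1}{p}\bigr)^p\,|\nabla U_p|^p/U_p^p$ with the weight $(\tfrac{N-p}{p})^p V_p^{p/2}/(|x|^2+(1-y)^2)^{p/2}$ via the function $h$ from Lemma \ref{Lemma IH gene}, then invoke the D'Ambrosio--Dipierro ground-state Hardy principle (their Theorem 2.1) together with the weak $p$-superharmonicity of $U_p$ from Proposition \ref{Prop sol U}, and derive optimality from Theorem \ref{Thm IH} and non-attainment from \cite{DD}, Theorem 4.1. Your sharper heuristic for non-attainment (equality in Picone forcing $u = cU_p \notin \dot W^{1,p}_0$) is exactly the mechanism underlying that cited theorem, and your remark on regularizing $U_p$ near the pole reflects the standard care taken there.
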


\begin{proof}
\tcrr{
$U_p$ is a nonnegative function and Proposition \ref{Prop sol U} implies that $-\lap_p U_p \ge 0$ in weak sense for $p \in [2, N)$. Substituting $U_p$ for $\rho$ in \cite{DD}:Theorem 2.1, we have the inequality (\ref{IH}) for any functions $\dot{W}_0^{1,p} (\re^N_+)$, since
\begin{align*}
\( \frac{p-1}{p} \)^p \frac{|\nabla \rho |^p}{\rho^p} 
&=\( \frac{p-1}{p} \)^p \frac{|\nabla U_p |^p}{U_p^p} \\
&= \( \frac{p-1}{p} \)^p \( \frac{N-p}{p-1} \)^p \frac{|\nabla h(x,y) |^p}{h(x,y)^p} \\
&= \( \frac{N-p}{p} \)^p \frac{V_p(x,y)^{\frac{p}{2}}}{ \left[ |x|^2 +(1-y)^2 \right]^{\frac{p}{2}} },
\end{align*}
where $h(x,y)$ is given by the proof of Lemma \ref{Lemma IH gene}. 
The optimality of the constant $(\frac{N-p}{p})^p$ in the inequality (\ref{IH}) follows from Theorem \ref{Thm IH} and the} \tcbb{non-attainability} \tcrr{follows from \cite{DD}:Theorem 4.1.}
\end{proof}


In the last of this section, we give \tcr{an} improved Hardy-Sobolev \tcr{inequality} on the half-space for $p=2$. 
The proof is simpler than the \tcr{that} of Theorem \ref{Thm IH}. 
We omit \tcr{it} here.

\begin{theorem}\label{Thm IHS}(Improved Hardy-Sobolev \tcr{inequality} for $p =2$)
Let \tcr{$p=2 < N$}, $0 \le s <2$, and $2^*(s) = \frac{2 (N-s)}{N-2}$. 
Then the \tcr{inequality}
\begin{align}\label{IHS}
	S_{N,2,s} &\( \int_{\re^N_+} \frac{|u(x,y)|^{2^*(s)}}{\( |x|^2 +(1-y)^2 \)^{\frac{s}{2}}} 
	\frac{V_2(x,y)}{ \left[ 1-X^{\frac{N-2}{2}} \right]^{\frac{2-s}{N-2}}}  \,dxdy \)^{\frac{2}{2^* (s)}} 
	\le \int_{\re^N_+} |\nabla u(x,y)|^2 \,dxdy,
\end{align}
\tcr{holds} for any $u \in \dot{W}_0^{1,2}(\re^N_+)$ 
\tcr{of the form $u(x,y)= \tilde{u}(G_{\re^N_+, \tcr{e_N}}(x,y))$ for some function $\tilde{u}$ on $[0, +\infty)$,} 
where $V_2(x,y)$ and $X$ is given by \eqref{V_p} in Theorem \ref{Thm IH}, 
and $S_{N,2,s}$ is the Hardy-Sobolev best constant, i.e., 
\begin{align*}
	S_{N,2,s} 
	= \inf_{u \in C_c^\infty(\re^N) \setminus \{ 0\}} \frac{\int_{\re^N} |\nabla u|^2 \,dx}{\( \int_{\re^N} \frac{|u|^{2^*(s)}}{|x|^s} \,dx \)^{\frac{2}{2^*(s)}}}.
\end{align*}
\end{theorem}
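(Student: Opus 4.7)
The plan is to apply the generalized harmonic transplantation \eqref{U trans} with $p=2$ and reduce \eqref{IHS} to the classical Hardy--Sobolev inequality on $\re^N$, much as in the proof of Theorem \ref{Thm IH}. The case $p=2$ is significantly simpler because $U_2$ coincides with the honest harmonic Green's function $G_{\re^N_+, e_N}$ and is therefore harmonic on $\re^N_+ \setminus \{e_N\}$; equivalently, Proposition \ref{Prop cal U}~(I) contains the factor $(p-2)$, which vanishes, so $F_2 \equiv 0$ in \eqref{F_p}. Consequently, the second equality of Lemma \ref{Lemma Omega gene} gives the clean identity
$$
\int_{\re^N_+} |\nabla u(x,y)|^2 \, dxdy = \int_{\re^N} |\nabla w(\tilde z)|^2 \, d\tilde z,
$$
where $w = w(r)$ is the radial function on $\re^N$ determined by $u(x,y) = w(h(x,y))$ with $U_2(x,y) = G_{\re^N, O}(h(x,y))$; such a $w$ exists precisely because $u$ has the prescribed symmetry $u = \tilde u \circ G_{\re^N_+, e_N}$.

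The heart of the proof is the algebraic identification of the weight. Solving $U_2(x,y) = G_{\re^N, O}(h(x,y))$ explicitly yields
$$
h(x,y) = \sqrt{|x|^2 + (1-y)^2} \, \bigl[\, 1 - X^{(N-2)/2} \,\bigr]^{-1/(N-2)},
$$
and combining this with the identity $V_2(x,y) / (|x|^2+(1-y)^2) = |\nabla h(x,y)|^2 / h(x,y)^2$ already established in the proof of Lemma \ref{Lemma IH gene}, the intimidating weight in \eqref{IHS} collapses to a Jacobian factor:
$$
\frac{V_2(x,y)}{(|x|^2+(1-y)^2)^{s/2}\bigl[1-X^{(N-2)/2}\bigr]^{(2-s)/(N-2)}} = \frac{|\nabla h(x,y)|^2}{h(x,y)^s}.
$$
This is the only nontrivial computation; the exponent $(2-s)/(N-2)$ in \eqref{IHS} is chosen exactly so this cancellation occurs.

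With this in hand, I would apply the coarea formula to the level sets of $h$, using the $p=2$ identity $\int_{h^{-1}(\{r\})} |\nabla U_2| \, d\mathcal{H}^{N-1} = 1$, which is the $F_2 \equiv 0$ specialization of the computation performed in the proof of Lemma \ref{Lemma IH gene}. This yields
$$
\int_{\re^N_+} \frac{|u(x,y)|^{2^*(s)} V_2(x,y)}{(|x|^2+(1-y)^2)^{s/2}\bigl[1-X^{(N-2)/2}\bigr]^{(2-s)/(N-2)}} \, dxdy = \int_{\re^N} \frac{|w(\tilde z)|^{2^*(s)}}{|\tilde z|^s} \, d\tilde z.
$$
Combining the two identities above with the classical Hardy--Sobolev inequality $S_{N,2,s} \bigl( \int_{\re^N} |w|^{2^*(s)}/|\tilde z|^s \, d\tilde z \bigr)^{2/2^*(s)} \le \int_{\re^N} |\nabla w|^2 \, d\tilde z$ applied to the transplanted function $w$ produces \eqref{IHS}. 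The only mild bookkeeping issue I anticipate is checking that $w$ belongs to the natural space for the Hardy--Sobolev inequality on $\re^N$: since $h \to \infty$ as $(x,y)$ approaches $\pd\re^N_+$ or $\infty$ and $h \to 0$ as $(x,y) \to e_N$, the boundary vanishing of $u$ corresponds to decay of $w$ at infinity in $\re^N$. This is automatic for test functions $u$ of the prescribed form, and the full statement on $\dot W_0^{1,2}(\re^N_+)$ follows by a routine density argument.
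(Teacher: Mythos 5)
Your proposal is correct and takes exactly the route the authors indicate (the paper omits this proof, stating only that it is ``simpler than that of Theorem \ref{Thm IH}''). You correctly identify what makes $p=2$ special: $U_2 = G_{\re^N_+, e_N}$ is genuinely harmonic, so $F_2 \equiv 0$ in \eqref{F_p}, turning the Dirichlet-energy comparison of Lemma \ref{Lemma Omega gene} into an exact identity and making Lemma \ref{Lemma:claim 2} superfluous; your algebraic collapse of the weight to $|\nabla h|^2/h^s$ via $h = \sqrt{|x|^2+(1-y)^2}\,[1-X^{(N-2)/2}]^{-1/(N-2)}$ checks out, and the coarea/density bookkeeping matches the template already laid out in the proofs of Lemmas \ref{Lemma Omega gene} and \ref{Lemma IH gene}.
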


\begin{remark}
\tcrr{The inequality (\ref{IHS}) does not hold \tcr{for functions} without the symmetry (\ref{*'}), 
see Proposition \ref{Prop all fct. zero}.}
\end{remark}

%
%

\section{Appendix}\label{S App}

First, we show that 
the radial critical Sobolev \tcr{space} $\dot{W}_{0, {\rm rad}}^{1,N} (\re^N)$ cannot be embedded 
to \tcr{any} weighted Lebesgue \tcr{space} $L^q (\re^N; g(x) \,dx)$ for $q \in [1, \infty)$ and for $g >0$. 

\begin{prop}\label{Prop re^N}
There is no weight function $g >0$ such that the inequality
\begin{align*}
C \( \int_{\re^N} |u|^q g(x) \,dx \)^{\frac{N}{q}} \le \int_{\re^N} |\nabla u|^N \,dx
\end{align*}
holds for any $u \in C_{c, {\rm rad}}^1(\re^N)$ for some $C>0$. 
\end{prop}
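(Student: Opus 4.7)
The plan is to construct, for any putative positive weight $g$, a sequence of smooth compactly supported radial test functions $\{u_k\} \subset C^1_{c,\mathrm{rad}}(\re^N)$ with uniformly bounded Dirichlet $N$-energy $\int_{\re^N} |\nabla u_k|^N\, dx$ but with $\int_{\re^N} |u_k|^q g\, dx \to \infty$, directly contradicting the hypothesized inequality. First I would reduce to the case $g \in L^1_{\mathrm{loc}}(\re^N)$: if $\int_K g\, dx = +\infty$ for some compact ball $K$, any $u \in C^1_c$ with $u \equiv 1$ on $K$ already makes the left-hand side infinite. Since $g > 0$, the decomposition $\{g > 0\} = \bigcup_n \{g \ge 1/n\}$ shows that some level set $\{g \ge 1/n\}$ has positive measure, hence there exist $R > 0$ and $c > 0$ such that the set $E := \{x \in B_R : g(x) \ge c\}$ has positive Lebesgue measure.

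The core construction is the Moser-type radial cutoff
\begin{align*}
u_k(r) = \begin{cases} k^{(N-1)/N} & 0 \le r \le R, \\ k^{-1/N} \log(R e^k / r) & R \le r \le R e^k, \\ 0 & r \ge R e^k, \end{cases}
\end{align*}
for which a direct computation in polar coordinates gives
\begin{align*}
\int_{\re^N} |\nabla u_k|^N\, dx = \w_{N-1} \int_R^{Re^k} \frac{1}{k\, r}\, dr = \w_{N-1},
\end{align*}
uniformly in $k$, while, since $u_k \equiv k^{(N-1)/N}$ on $B_R \supset E$,
\begin{align*}
\int_{\re^N} |u_k|^q g\, dx \ge k^{(N-1)q/N} \int_E g\, dx \ge c\, |E|\, k^{(N-1)q/N} \longrightarrow \infty.
\end{align*}
Substituting these bounds into the conjectured inequality would force $C\, (c|E|)^{N/q}\, k^{(N-1)N/q} \le \w_{N-1}$ for every $k$, which is impossible.

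The only technicality is that $u_k$ is Lipschitz with corners at $r = R$ and $r = R e^k$, whereas the statement requires $C^1_{c,\mathrm{rad}}$ test functions. This is handled by a standard radial mollification in vanishing neighborhoods of the two corners, which yields $\tilde u_k \in C^1_{c,\mathrm{rad}}(\re^N)$ with $\tilde u_k \ge \tfrac{1}{2} k^{(N-1)/N}$ on $B_{R/2}$ and $\int |\nabla \tilde u_k|^N\, dx \le 2\w_{N-1}$; the argument then runs verbatim after replacing $R$ by $R/2$ and $E$ by $E \cap B_{R/2}$ (shrinking if necessary to keep positive measure). I do not anticipate a genuine obstacle: the normalization $k^{(N-1)/N}$ is precisely the critical Trudinger--Moser height that keeps the Dirichlet $N$-energy bounded while letting the sup-norm of $u_k$ on a fixed ball diverge, and this is exactly the mechanism by which any weighted $L^q$ embedding of $\dot W^{1,N}_{\mathrm{rad}}(\re^N)$ must fail.
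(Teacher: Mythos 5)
Your proof is correct and follows essentially the same approach as the paper: both use the log-cutoff (Moser-type) radial test function with bounded Dirichlet $N$-energy, and since the claimed inequality is invariant under $u \mapsto \lambda u$ (both sides scale as $\lambda^N$), your normalization of the plateau height at $k^{(N-1)/N}$ is equivalent to the paper's normalization at $1$, where one instead sees $\int|\nabla\phi_R|^N = \omega_{N-1}(\log R)^{1-N}\to 0$ with $\int|\phi_R|^q g\,dx\ge\int_{B_1}g>0$. The paper also handles the $C^1$ regularization by mollification, as you do, and your preliminary reduction to $g\in L^1_{\rm loc}$ and the level-set $E$ is unnecessary (since $g>0$ already gives $\int_{B_1}g>0$, finite or not) but harmless.
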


\begin{proof}
Consider the radial test function
\begin{align*}
	\phi_R(|x|) = 
	\begin{cases}
	1 \quad&\text{if} \,\, |x| \le 1,\\
	\frac{\log \frac{R}{|x|}}{\log R} &\text{if} \,\, 1< |x| < R,\\
	0 &\text{if} \,\, |x| \ge R.
	\end{cases}
\end{align*}
Direct calculation shows that
\begin{align*}
	&\int_{\re^N} |\phi_R |^q g(x) \,dx \ge \int_{B_1} g(x) \,dx >0, \\
	&\int_{\re^N} |\nabla \phi_R|^N \,dx = \w_{N-1} \( \log R \)^{1-N} \to 0 \quad (R \to \infty).
\end{align*}
\tcr{Though $\phi_R$ is not $C^1$, we can mollify it as in \cite{HK}:Lemma 8.1, to obtain a $C^1_{rad}(\re^N)$ function with the same property.}
Therefore, we obtain Proposition \ref{Prop re^N}. 
\end{proof}


\tcr{Next}, we show \tcr{that} the improved inequalities (\ref{IH}), (\ref{IHS}) in Theorem \ref{Thm IH} \tcr{and Theorem} \ref{Thm IHS} 
do not hold without the symmetry (\ref{*'}).

\begin{prop}\label{Prop all fct. zero}
Let $1<p<N$, $0 \le s \,\tcrr{<} \, p$, $p^*(s )= \frac{p(N-s)}{N-p}$, 
and $V_p$, \tcr{$X$ }be given in \tcr{\eqref{V_p}}. 
Then
\begin{align*}
	S:= \inf_{u \in C_c^1(\re^N_+) \setminus \{ 0\}} 
	\frac{\int_{\re^N_+} |\nabla u(x,y)|^p \,dxdy}{\( \int_{\re^N_+} \frac{|u(x,y)|^{p^*(s)}}
	{\( |x|^2 +(1-y)^2 \)^{\frac{s}{2}}} \frac{V_p(x,y)^{\frac{p}{2}}}{\left[ 1-X^{\frac{N-p}{2(p-1)}} \right]^{\frac{(p-1)(p-s)}{N-p}}} \,dxdy \)^{\frac{p}{p^* (s)}}} 
	= 0\tcr{.}
\end{align*}
\end{prop}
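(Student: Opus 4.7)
The plan is to exhibit a sequence of admissible test functions $\phi_\ep$ concentrated at a boundary point $(x_0, 0) \in \pd \re^N_+ \setminus \{e_N\}$ (for concreteness, take $x_0 = 0$), and to show that the Rayleigh quotient defining $S$ tends to zero along this sequence. The heuristic is that the weight appearing in the denominator of that quotient blows up strongly at $\pd \re^N_+$, so that concentration near the boundary becomes ``too cheap'' for the putative inequality to hold.

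Concretely, I would fix $\psi \in C_c^1(\re^N_+)$ with $\psi \not\equiv 0$ and $\operatorname{supp}\psi \subset \re^{N-1}\times[a,b]$ for some $0 < a < b$, and set $\phi_\ep(x, y) := \psi\!\left(\tfrac{x - x_0}{\ep}, \tfrac{y}{\ep}\right)$. By the standard scaling,
$$\int_{\re^N_+} |\nabla \phi_\ep|^p \, dxdy = \ep^{N-p} \int_{\re^N_+} |\nabla \psi|^p \, d\xi d\eta.$$

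The core of the argument is the asymptotic expansion of the weight
$$W(x,y) := \frac{V_p(x,y)^{p/2}}{(|x|^2+(1-y)^2)^{s/2}\,[1-X^{\frac{N-p}{2(p-1)}}]^{\frac{(p-1)(p-s)}{N-p}}}$$
as $y \to 0$ near $(x_0,0)$. Since $X = 1$ at $y = 0$ and $1 - X \sim \frac{4y}{|x_0|^2+1}$, one has $1 - X^{\frac{N-p}{2(p-1)}} \sim \frac{2(N-p)}{(p-1)(|x_0|^2+1)}\,y$, while the numerator of $V_p$ tends to the positive constant $\tfrac{4}{|x_0|^2+1}$. This yields $V_p \sim C(x_0)\,y^{-2}$, hence
$$W(x,y) \sim C'(x_0)\,y^{-p - \frac{(p-1)(p-s)}{N-p}} = C'(x_0)\,y^{-p^*(s) + \frac{p-s}{N-p}}$$
in a neighborhood of $(x_0, 0)$. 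Substituting and changing variables $\xi = (x-x_0)/\ep$, $\eta = y/\ep$ gives
$$\int_{\re^N_+} W\,|\phi_\ep|^{p^*(s)} \, dx dy \sim C'(x_0)\, \ep^{N - p^*(s) + \frac{p-s}{N-p}} \int \eta^{-p^*(s)+\frac{p-s}{N-p}}|\psi|^{p^*(s)} \, d\xi d\eta,$$
where the $(\xi,\eta)$-integral is finite since $\operatorname{supp}\psi$ is bounded away from $\{\eta = 0\}$. A direct computation then shows that the Rayleigh quotient scales like $\ep^\alpha$ with
$$\alpha = (N-p) - \tfrac{p}{p^*(s)}\!\left(N - p^*(s) + \tfrac{p-s}{N-p}\right) = \tfrac{(p-s)(N-1)}{N-s} > 0,$$
using $s < p$ and $p < N$. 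Letting $\ep \searrow 0$ yields $S = 0$.

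The main obstacle is the precise leading-order behavior of $V_p$ near $\pd \re^N_+$: both the numerator and the denominator of $V_p$ degenerate as $y \to 0$, and the cancellation in the numerator (in particular the term $X^{\frac{N-p}{2(p-1)}}(|x|^2+(1+y)^2)^{-1}(|x|^2+y^2-1)$) must be tracked carefully to confirm that $V_p \sim y^{-2}$ with a positive constant depending only on $x_0$. Once this asymptotic is in hand, the scaling argument above is routine.
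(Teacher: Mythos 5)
Your proposal matches the paper's argument essentially verbatim: concentrate a test function near the boundary point $(0,0)$ at scale $\ep$, use the asymptotic $V_p(x,y)\sim \bigl(\tfrac{p-1}{N-p}\bigr)^2 y^{-2}$ (equivalently $1-X\sim 4y$) as $y\to 0$, and show the Rayleigh quotient scales like $\ep^{(N-1)(p-s)/(N-s)}\to 0$. The only cosmetic difference is the test function -- the paper uses a Lipschitz cap supported on $B_\ep\bigl((0,\ep)\bigr)$, whereas you rescale a fixed $C_c^1$ bump into a thin slab $\{a\ep\le y\le b\ep\}$ -- and your exponent arithmetic $\alpha=\tfrac{(p-s)(N-1)}{N-s}$ agrees with the paper's.
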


\tcrr{
\begin{proof}
We use the same test function as it in \cite{S(NA)}:Propositon 2. 
Let $z=(x,y) \in \re^N_+$ and $z_\ep = (0, \ep)$. 
Note that $X = \frac{|x|^2 + (1-y)^2}{|x|^2 + (1+y)^2} \to 1$ and 
\begin{align*}
V_p (x, y) &= \( \frac{4(p-1)}{N-p} \)^2 (1-X)^{-2} + o\( (1-X)^{-2} \) \\
&=\( \frac{p-1}{N-p} \)^2 y^{-2} + o\( y^{-2} \)
\end{align*}
as $|z| = \sqrt{ |x|^2 + y^2} \to 0$. 
For small $\ep >0$, we define $u_\ep$ as follows:
\begin{align*}
u_{\ep}(z) =
	   \begin{cases}
		v\( \frac{|z-z_{\ep}|}{\ep} \) \,\,\,&\text{if} \,\,\, z \in B_{\ep}(z_{\ep}), \\
		0  &\text{if} \,\,\, z \in \re^N_+ \setminus B_{\ep}(z_{\ep}),
	   \end{cases}
\,\, \text{where}\,\, v(t)=
	   \begin{cases}
		1 \,\,\,&\text{if} \,\,\,0\le t \le \frac{1}{2},  \\
		2(1-t)  &\text{if} \,\,\, \frac{1}{2} < t \le 1.
	   \end{cases}
\end{align*}
Then we have 
\begin{align*}
&\int_{\re^N_+} | \nabla u_{\ep} (z)|^p \,dz = \ep^{N-p} \int_{B_1} |\nabla v (|z|)|^p \,dz = C \ep^{N-p}, \\
	&\int_{\re^N_+} \frac{|u_\ep (x,y)|^{p^*(s)}}{\( |x|^2 +(1-y)^2 \)^{\frac{s}{2}}} 
	\frac{V_p(x,y)^{\frac{p}{2}}}{ \left[ 1-X^{\frac{N-p}{2(p-1)}} \right]^{\frac{(p-1)(p-s)}{N-p}}} \,dxdy \\
	&\ge C \, \ep^{-p} \int_{B_{\ep/2}(z_\ep)} \ep^{-\frac{(p-1)(p-s)}{N-p} } dz 
	= O \( \ep^{N-p -\frac{(p-1)(p-s)}{N-p}} \) \quad (\ep \to 0).
\end{align*}
Applying $u_\ep$ \tcr{as a test function for} $S$, \tcr{we see}
\begin{align*}
	S \le C \, \ep^{(N-p) \(1- \frac{p}{p^*(s)} \) + \frac{(p-1) (p-s) p}{(N-p) p^*(s)}} = O\( \ep^{\frac{N-1}{N-s} (p-s)} \) \to 0\,\,\text{as}\,\, \ep \to 0.
\end{align*}
\end{proof}
}

\tcr{Next proposition is a fact from linear algebra.}

\begin{prop}\label{Prop detJ}
Let $v \in \re^N$, $|v|=1$, $t \in \re$, $A= I + t \,v \otimes v$, \tcr{where $I$ is the identity matrix on $\re^N$.} 
Then $A$ has two eigenvalues $1$ and $1+t$. 
The multiplicity of $1$ is $N-1$, and the multiplicity of $1+t$ is $1$. 
Especially, 
\begin{align*}
	\det A = 1+t.
\end{align*} 
If $t \not=-1$, then there exists the inverse matrix 
\begin{align}\label{A^{-1}}
	A^{-1} = I - \frac{t}{t+1} \,v \otimes v.
\end{align}
\end{prop}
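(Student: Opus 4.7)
The plan is to analyze $A$ by exploiting the rank-one structure of $v \otimes v$. First, I would record the basic identity $(v \otimes v) w = (v \cdot w)\, v$ for any $w \in \re^N$, which follows directly from the definition $(v \otimes v)_{ij} = v_i v_j$. In particular, since $|v|=1$, we have $(v \otimes v) v = v$ and $(v \otimes v) w = 0$ whenever $w \in v^{\perp}$.

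Next, I would decompose $\re^N = \mathrm{span}(v) \oplus v^{\perp}$ and read off the eigenstructure of $A = I + t\, v \otimes v$ from the identity above. On the one-dimensional subspace $\mathrm{span}(v)$ the operator acts as multiplication by $1+t$, and on the $(N-1)$-dimensional subspace $v^{\perp}$ it acts as the identity. Choosing an orthonormal basis consisting of $v$ together with an orthonormal basis of $v^{\perp}$, the matrix representing $A$ is diagonal with entries $1+t, 1, \ldots, 1$. Hence the eigenvalues and their multiplicities are exactly as claimed, and
\[
\det A = (1+t) \cdot 1^{N-1} = 1+t.
\]

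For the inverse formula when $t \ne -1$, I would use the same rank-one identity to verify the candidate directly. Since $(v \otimes v)(v \otimes v) = |v|^2\, v \otimes v = v \otimes v$, I compute
\[
\left( I + t\, v \otimes v \right)\left( I - \tfrac{t}{t+1}\, v \otimes v \right) = I + \left( t - \tfrac{t}{t+1} - \tfrac{t^2}{t+1} \right) v \otimes v = I,
\]
since the coefficient in parentheses equals $\frac{t(t+1) - t - t^2}{t+1} = 0$. This yields \eqref{A^{-1}}.

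There is no real obstacle here; the proposition is a standard Sherman--Morrison computation. The only mild care needed is the identity $(v \otimes v)^2 = v \otimes v$, which relies on $|v|=1$ and underlies both the eigenvalue computation and the verification of the inverse. If preferred, the determinant formula can alternatively be obtained from the matrix determinant lemma $\det(I + a b^{T}) = 1 + b^{T} a$ applied with $a = tv$, $b = v$, but the eigenvalue argument above seems cleanest and makes the multiplicities transparent.
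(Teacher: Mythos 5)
Your proof is correct and takes essentially the same approach as the paper's: identify the eigenspaces $\mathrm{span}(v)$ (eigenvalue $1+t$) and $v^{\perp}$ (eigenvalue $1$), take the product of eigenvalues for the determinant, and verify the inverse using $(v\otimes v)^2 = v\otimes v$. The only cosmetic difference is that you package the key step as the single identity $(v\otimes v)w=(v\cdot w)v$ and plug the claimed inverse in directly, whereas the paper does two separate coordinate computations and solves $t-s-st=0$ for the unknown coefficient $s$.
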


\begin{proof}
Let $u =(u_1, \dots, u_N)^T$ satisfy $u \cdot v =0$. 
Then we have
\begin{align*}
	\( (v \otimes v ) u \)_i =  \sum_{j=1}^N (v \otimes v)_{i,j} u_j = \sum_{j=1}^N v_i  v_j u_j 
	= \( \sum_{j=1}^N  v_j u_j \) v_i = 0.
\end{align*}
Therefore, $A u = (I + t v \otimes v) u = u$ which means that $u$ is the \tcr{eigenvector} of the eigenvalue $1$ of $A$. 
\tcr{Note that there are $N-1$ such linearly independent $u$, thus the multiplicity of the eigenvalue $1$ is $N-1$.}
\tcr{Also} since $|v|=1$, for any $i = 1, \dots, N$, we have
\begin{align*}
	\( (v \otimes v ) v \)_i & = \sum_{j=1}^N v_i  v_j v_j = \( \sum_{j=1}^N  v_j v_j \) v_i = v_i
\end{align*}
which implies that $(v \otimes v) v = v$. 
Therefore, $A v = (I + t v \otimes v) v = (1 + t) v$ which means that $v$ is the \tcr{eigenvector} of the eigenvalue $1+t$ of $A$. 
Hence, 
the multiplicity of $1+t$ is $1$ and ${\rm det} (\tcr{I} + t v \otimes v) = 1 + t$. 

Next, we show (\ref{A^{-1}}). 
Since 
\begin{align*}
	\tcr{\((v \otimes v)^2\)_{ik}} &=  \sum_{j=1}^N (v \otimes v)_{i,j} (v \otimes v)_{jk} = \sum_{j=1}^N v_i v_j v_j v_k \\
	&= \( \sum_{j=1}^N  v_j^2 \) v_i v_k =  (v \otimes v)_{ik},
\end{align*}
we have $(v \otimes v)^2 =  (v \otimes v)$. Therefore, we have 
$$
	(I + t v \otimes v)(\tcr{I} - s v \otimes v) = I + (t-s-st) v \otimes v. 
$$
If $t-s-st = 0$, then the right-hand side is $I$. 
Therefore, $A^{-1} = I - \frac{t}{t+1} v \otimes v$. 
\end{proof}


\begin{prop}\label{Prop Cayley is Mobius}
Let $J, T_b, S_\la$ be given by Definition \ref{def M} and ${\bf B}$ be given by (\ref{Cayley type}). 
Then
\begin{align*}
	{\bf B} (z) = R \circ J \circ T_{e_N} \circ S_2 \circ J \circ T_{-e_N} (z), \text{where}\,\,R
	= 
	\begin{pmatrix}
	1 & &  & \\
	& \ddots &  & \\
	& & 1 & \\
	& & & -1
	\end{pmatrix},\,
	e_N = \begin{pmatrix}
	0 \\
	\vdots \\
	0 \\
	1
\end{pmatrix}
\end{align*}
for any $z \in \re^N$. 
\end{prop}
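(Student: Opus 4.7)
The plan is a direct verification: I would compute $R \circ J \circ T_{e_N} \circ S_2 \circ J \circ T_{-e_N}(z)$ by tracking the image of $z = (x, y) \in \re^{N-1} \times \re$ through the six maps one at a time, and then compare with the explicit formula for ${\bf B}(z)$ in \eqref{Cayley type}.

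Carrying out the steps in order: after $T_{-e_N}$ we obtain $(x, y-1)$; the first inversion $J$ divides by $|x|^2 + (y-1)^2$; the dilation $S_2$ multiplies everything by $2$; the translation $T_{e_N}$ adds $1$ to the last coordinate, and using $|x|^2 + (y-1)^2 + 2(y-1) = |x|^2 + y^2 - 1$ this produces
\begin{equation*}
    \left( \frac{2x}{|x|^2 + (y-1)^2}, \ \frac{|x|^2 + y^2 - 1}{|x|^2 + (y-1)^2} \right).
\end{equation*}

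The crux is the second application of $J$, for which I need to evaluate the squared norm of the above vector. The one non-routine algebraic identity is
\begin{equation*}
    4|x|^2 + (|x|^2 + y^2 - 1)^2 = \bigl(|x|^2 + (y-1)^2\bigr)\bigl(|x|^2 + (y+1)^2\bigr),
\end{equation*}
which follows from a direct expansion (both sides equal $|x|^4 + 2|x|^2 y^2 + y^4 + 2|x|^2 - 2y^2 + 1$). With this in hand the squared norm collapses to $\frac{|x|^2 + (y+1)^2}{|x|^2 + (y-1)^2}$, so the second $J$ replaces the common denominator $|x|^2 + (y-1)^2$ by $|x|^2 + (y+1)^2$ in both slots. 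Finally $R$ flips the sign of the last coordinate, turning $|x|^2 + y^2 - 1$ into $1 - |x|^2 - y^2$, which gives exactly ${\bf B}(z)$.

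There is no real obstacle: the whole computation is mechanical, with the displayed quadratic identity being the only step that is not a one-line substitution. In the written proof it suffices to list the iterated images and invoke the identity once at the key step.
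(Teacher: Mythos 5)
Your proof is correct, and it is essentially the same direct verification the paper gives: both track the point step by step through the six maps and check the result equals ${\bf B}(z)$. The only cosmetic difference is how the norm at the second inversion is computed — the paper expands $|e_N + 2(z-e_N)^*|^2 = 1 + \frac{4\,e_N\cdot(z-e_N)}{|z-e_N|^2} + \frac{4}{|z-e_N|^2}$ in vector notation, whereas you do the equivalent algebra by directly expanding the polynomial identity $4|x|^2 + (|x|^2+y^2-1)^2 = (|x|^2+(y-1)^2)(|x|^2+(y+1)^2)$ in coordinates.
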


\begin{proof}
Direct calculation shows that
\begin{align*}
J \circ T_{e_N} \circ S_2 \circ J \circ T_{-e_N} (z) 
&=[e_N + 2(z - e_N)^* ]^* = \frac{e_N + 2(z - e_N)^*}{|e_N + 2(z - e_N)^*|^2} \\
&= \frac{e_N + \frac{2(z - e_N)}{|z - e_N|^2}}{\left| e_N + \frac{2(z - e_N)}{|z - e_N |^2} \right|^2} \\
&= \frac{e_N + \frac{2(z - e_N)}{|z - e_N|^2}}{1 + \frac{4}{|z - e_N |^2} + \frac{4 e_N \cdot (z - e_N)}{|z - e_N |^2}}  \\
&= \frac{e_N |z-e_N|^2 + 2(z - e_N)}{|z - e_N|^2 + 4 z_N}  \\
&= \frac{e_N (|z|^2 -2 z_N + 1) + 2(z - e_N)}{|z|^2 + 2 z_N + 1}  \\
&= \frac{(2x, |z|^2 -1)}{|x|^2 + (1 + y)^2}  \\
&= \frac{(2x, |x|^2 + y^2 -1)}{|x|^2 + (1 + y)^2}.
\end{align*}
Therefore, we have
\begin{align*}
R \circ J \circ T_{e_N} \circ S_2 \circ J \circ T_{-e_N} (z)
= \frac{(2x, 1- |x|^2 - y^2 )}{|x|^2 + (1 + y)^2}
= {\bf B} (z).
\end{align*}
\end{proof}


Finally, we describe an application of the transformations in III.-(i) in \S \ref{S Harmonic}. 
For more general case, see \cite{HK}. 
It is well-known that the Sobolev inequality 
\begin{align}\label{Sobolev}
	\tcr{S_{N,p}} \( \int_{\re^N} |u|^{p^*}\,dx \)^{\frac{p}{p^*}} \le \int_{\re^N} |\nabla u|^p\,dx
\end{align}
for any $u \in \dot{W}_0^{1,p}(\re^N)$, $1 < p < N$, with the best constant
$$
	\tcr{S_{N,p}} = \pi^{\frac{p}{2}} N \( \frac{N-p}{p-1} \)^{p-1} 
	\( \frac{\Gamma(\frac{N}{p})\Gamma(1 + N - \frac{N}{p})}{\Gamma(1 + \frac{N}{2}) \Gamma(N)} \)^{\frac{p}{N}}\tcr{,}
$$
follows from a one-dimensional inequality obtained by Bliss \cite{Bliss}:
Let $v: [0, +\infty) \to \re$ be an absolutely continuous function on $(0, +\infty)$ such that $v' \in L^p(0,+\infty)$, $v(0) = 0$.
Put $q > p > 1$.
Then the inequality
\begin{equation}
\label{Bliss}
	C(p,q) \( \int_0^{\infty} \frac{|v(t)|^q}{t^{1 + q(\frac{p-1}{p})}} dt \)^{1/q} \le  \( \int_0^{\infty} |v'(t)|^p dt \)^{1/p}
\end{equation}
holds where
$$
	C(p,q) = \( \frac{\Gamma\(\frac{q}{q-p}\)\Gamma\(\frac{p(q-1)}{q-p}\)}{\Gamma\(\frac{pq}{q-p}\)} \)^{1/p-1/q} \( \frac{q(p-1)}{p} \)^{1/q}.
$$
See Maz'ya \cite{Mazya}, pp. 274, the equation (4.6.4).
Also see \cite{Os} for a new proof of this classical inequality.
In the following, we consider radial functions only.
To obtain the best Sobolev inequality (\ref{Sobolev}) for radial functions $u(r)$, $r = |x|$, we change the variables 
$$
	v(t) = u(r), \,\text{where}\,\,t = \frac{p-1}{N-p} \w_{N-1}^{-\frac{1}{p-1}} r^{-\frac{N-p}{p-1}}
$$
and apply the Bliss inequality (\ref{Bliss}) for $q = p^* > p > 1$.
Note that the condition $v(0) = 0$ is satisfied if $u(r) \to 0$ as $r \to \infty$.

\vspace{1em}
Instead, let us change the variables 
$$
	v(t) = u(r), \,\text{where}\,\, t = \w_{N-1}^{-\frac{1}{N-1}} \log \frac{1}{r}
$$
and put $q > p = N$.
Then the usual computation shows that
\begin{align*}
	&\omega_{N-1}^{1-\frac{q}{N}} \int_0^{\infty} \frac{|v(t)|^q}{t^{1 + q(\frac{N-1}{N})}} dt = \omega_{N-1} \int_0^1  \frac{|u(r)|^q}{r \( \log \frac{1}{r} \)^{1 + q(\frac{N-1}{N})}} \, dr
	= \int_{B_1^N}  \frac{|u(x)|^q}{|x|^N \( \log \frac{1}{|x|} \)^{1 + q(\frac{N-1}{N})}} dx, \\
	&\int_0^{\infty} |v'(t)|^N dt = \int_{B_1^N} |\nabla u|^N dx.
\end{align*}
Also in this case, the condition $v(0) = 0$ is equivalent to $u(1) = 0$.
Inserting these identities into \eqref{Bliss}, we obtain
\begin{equation}
\label{Bliss2}
	C(q) \( \int_{B_1^N}  \frac{|u(x)|^q}{|x|^N \( \log \frac{1}{|x|} \)^{1 + q(\frac{N-1}{N})}} dx \)^{N/q} \le \int_{B_1^N} |\nabla u|^N dx
\end{equation}
for any $u \in \dot{W}^{1,N}_{0, rad}(B_1^N)$ where
\begin{equation}
\label{C_q}
	C(q) = \omega_{N-1}^{1-N/q} C(N, q)^{-N} 
	= \omega_{N-1}^{1-N/q} \( \frac{\Gamma\(\frac{q}{q-N}\)\Gamma\(\frac{N(q-1)}{q-N}\)}{\Gamma\(\frac{Nq}{q-N}\)} \)^{1-N/q} 
	\( \frac{q(N-1)}{N} \)^{N/q}.
\end{equation}
For the inequality (\ref{Bliss2}), see e.g. \cite{S(JDE)}. 
Now, we check that $\lim_{q \to N + 0} C(q) = \( \frac{N-1}{N} \)^N$.
Recall the Stirling formula 
$$
	\Gamma(s) = \sqrt{2\pi} s^{s-1/2} e^{-s} + o(1), \quad \text{as} \quad s \to +\infty 
$$ 
and put $s = \frac{q}{q-N}$.
Then, we see
\begin{align*}
	&\frac{\Gamma\(\frac{q}{q-N}\)\Gamma\(\frac{N(q-1)}{q-N}\)}{\Gamma\(\frac{Nq}{q-N}\)} = \frac{\Gamma(s)\Gamma(\frac{(q-1)N}{q} s)}{\Gamma(Ns)} 
	\sim \frac{\Gamma(s)\Gamma((N-1)s)}{\Gamma(Ns)} \\
	&\sim \sqrt{2\pi} \frac{s^{s-1/2} e^{-s} ((N-1)s))^{(N-1)s-1/2} e^{-(N-1)s}}{(Ns)^{Ns-1/2} e^{-Ns}}
	= \sqrt{2\pi} \frac{(N-1)^{(N-1)s-1/2}}{N^{Ns-1/2}} s^{-1/2}
\end{align*}
\tcr{as $q \to N+0$ (which is equivalent to $s= \frac{q}{q-N} \to \infty$),} 
and for $C(q)$ in \eqref{C_q}, we have
\begin{align*}
	C(q) &\sim \( \frac{\Gamma(s)\Gamma(\frac{N(q-1)}{q} s)}{\Gamma(Ns)} \)^{1/s} \( \frac{q(N-1)}{N} \)^{N/q} \\
	&\sim \( \sqrt{2\pi} \frac{(N-1)^{(N-1)s-1/2}}{N^{Ns-1/2}} s^{-1/2} \)^{1/s} (N-1) \\ 
	&\sim \frac{(N-1)^{N-1}}{N^N} (N-1) s^{-1/2s} \to \( \frac{N-1}{N} \)^N \quad \text{as} \quad \tcr{s \to +\infty.} 
\end{align*}
Thus if we take a limit $q \to N + 0$ in the inequality \eqref{Bliss2}, we have 
the critical Hardy inequality
$$
	\( \frac{N-1}{N} \)^N \int_{B_1^N}  \frac{|u(x)|^N}{|x|^N \( \log \frac{1}{|x|} \)^N} dx \le \int_{B_1^N} |\nabla u|^N dx
$$
on a unit ball for any $u \in \dot{W}^{1,N}_{0, {\rm rad}}(B_1^N)$. 

In conclusion, we obtain the following.
\begin{prop}\label{Prop Bliss}
The Bliss inequality (\ref{Bliss}) \tcr{yields} both the best Sobolev inequality (\ref{Sobolev}) 
and the generalized critical Hardy \tcr{inequality} (\ref{Bliss2}) for radially symmetric functions. 
\end{prop}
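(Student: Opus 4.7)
The plan is to establish Proposition \ref{Prop Bliss} in two parts, each using a harmonic transplantation of type III.-(i) from \S\ref{S Harmonic} to reduce an $N$-dimensional radial inequality to the one-dimensional Bliss inequality \eqref{Bliss}. Throughout, I restrict to radially symmetric functions $u = u(r)$, $r = |x|$, so that all quantities in \eqref{Sobolev} and \eqref{Bliss2} reduce to one-dimensional integrals in $r$.

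First, I would derive the Sobolev inequality \eqref{Sobolev} for radial $u \in \dot{W}^{1,p}_{0,\mathrm{rad}}(\re^N)$ with $1 < p < N$. The substitution is $v(t) = u(r)$ with $t = \frac{p-1}{N-p}\omega_{N-1}^{-1/(p-1)} r^{-(N-p)/(p-1)}$, the transplantation from $\dot{W}^{1,p}_{0,\mathrm{rad}}(\re^N)$ to one dimension via $p$-Green's function on $\re^N$. A direct computation using $dr/dt$ shows that $\int_{\re^N} |\nabla u|^p \, dx = \int_0^{\infty} |v'(t)|^p \, dt$ and that $\int_{\re^N} |u|^{p^*} dx$ equals a constant multiple of $\int_0^\infty |v(t)|^{p^*} t^{-1-p^*(p-1)/p} \, dt$, matching the integrand of the right-hand side of \eqref{Bliss} with $q = p^*$. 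Applying \eqref{Bliss} with these parameters and tracking the constants $C(p,q)$ and $\omega_{N-1}$ gives the radial Sobolev inequality with the sharp constant $S_{N,p}$. The boundary condition $v(0) = 0$ corresponds to $u \to 0$ as $r \to \infty$, which holds for $u \in \dot{W}^{1,p}_{0,\mathrm{rad}}(\re^N)$.

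Second, I would derive \eqref{Bliss2}. Here the substitution is the critical one, $v(t) = u(r)$ with $t = \omega_{N-1}^{-1/(N-1)} \log(1/r)$, which transplants $\dot{W}^{1,N}_{0,\mathrm{rad}}(B_1^N)$ to one dimension via Green's function for the $N$-Laplacian on $B_1^N$ (I.-(i) in \S\ref{S Harmonic}). A direct calculation, as already exhibited in the text just before the statement, produces the identities
\begin{align*}
\omega_{N-1}^{1-q/N} \int_0^{\infty} \frac{|v(t)|^q}{t^{1+q(N-1)/N}}\, dt &= \int_{B_1^N} \frac{|u(x)|^q}{|x|^N \bigl(\log \tfrac{1}{|x|}\bigr)^{1+q(N-1)/N}}\, dx, \\
\int_0^{\infty} |v'(t)|^N \, dt &= \int_{B_1^N} |\nabla u|^N \, dx,
\end{align*}
and $v(0) = 0$ corresponds to $u = 0$ on $\partial B_1^N$. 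Applying \eqref{Bliss} with $p = N$, $q > N$ then yields \eqref{Bliss2} with the explicit constant $C(q)$ in \eqref{C_q}.

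Finally, I would verify that $\lim_{q \to N^+} C(q) = \bigl(\tfrac{N-1}{N}\bigr)^N$, which recovers the critical Hardy inequality on $B_1^N$ as a limiting case. Setting $s = q/(q - N)$, so $s \to +\infty$ as $q \to N^+$, and using the Stirling asymptotics $\Gamma(s) \sim \sqrt{2\pi}\, s^{s-1/2} e^{-s}$, the gamma quotient in \eqref{C_q} simplifies to $\sqrt{2\pi}\, (N-1)^{(N-1)s - 1/2} N^{1/2-Ns} s^{-1/2}$; raising this to the $1/s$ power, multiplying by $\bigl(q(N-1)/N\bigr)^{N/q}$, and letting $s \to \infty$ gives the desired value. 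This asymptotic limit is the main (though modest) technical point of the argument, since the Sobolev and Hardy parts are otherwise purely a matter of applying the correct transplantation and then invoking \eqref{Bliss}.
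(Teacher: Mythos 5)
Your proposal is correct and follows essentially the same route as the paper: the paper's proof is precisely the two transplantations you describe (subcritical $t = \frac{p-1}{N-p}\omega_{N-1}^{-1/(p-1)} r^{-(N-p)/(p-1)}$ for the Sobolev inequality with $q=p^*$, and the Moser-type $t = \omega_{N-1}^{-1/(N-1)}\log(1/r)$ on $B_1^N$ for the generalized critical Hardy inequality with $p=N$), each followed by an application of \eqref{Bliss}. Your verification that $\lim_{q\to N^{+}}C(q) = \left(\frac{N-1}{N}\right)^N$ via Stirling's formula with $s = q/(q-N)$ is also the same computation the paper carries out.
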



\section*{Acknowledgment}
The first author (M.S.) was supported by JSPS KAKENHI Early-Career Scientists, No. JP19K14568. 
The second author (F.T.) was supported by JSPS Grant-in-Aid for Scientific Research (B), No. JP19136384. 
This work was partly supported by Osaka City University Advanced
Mathematical Institute (MEXT Joint Usage/Research Center on Mathematics
and Theoretical Physics) \tcbb{JPMXP0619217849}.



\begin{thebibliography}{99}
\bibitem{AS}
Adimurthi, Sandeep, K., {\it Existence and non-existence of the first eigenvalue of the perturbed Hardy-Sobolev operator}, 
\newblock Proc. Roy. Soc. Edinburgh Sect. A \textbf{132} (2002), No.5, 1021-1043.

\bibitem{A}
Ahlfors, L. V., {\it M\"obius transformations in several dimensions}, Ordway Professorship Lectures in Mathematics. University of Minnesota, School of Mathematics, Minneapolis, Minn., (1981).



\bibitem{BBF}
Bandle, C., Brillard, A., Flucher, M., {\it Green's function, harmonic transplantation, and best Sobolev constant in spaces of constant curvature}, Trans. Amer. Math. Soc. 350 (1998), no. 3, 1103-1128.

\bibitem{BG}
Baras, P., Goldstein, J. A., {\it The heat equation with a singular potential}, 
\newblock Trans. Amer. Math. Soc., 284 (1984), 121-139.

\bibitem{B}
Beardon, A. F., {\it The geometry of discrete groups}, Corrected reprint of the 1983 original. Graduate Texts in Mathematics, 91. Springer-Verlag, New York, 1995.

\bibitem{BP}
Beckner, W., Pearson, M., {\it On sharp Sobolev embedding and the logarithmic Sobolev inequalitiy}, Bull. London Math. Soc., 30 (1998), 80-84.


\bibitem{BFL}
Benguria, R. D., Frank, R. L., Loss, M., 
{\it The sharp constant in the Hardy-Sobolev-Maz'ya inequality in the three dimensional upper half-space}, Math. Res. Lett. 15 (2008), no. 4, 613-622.

\bibitem{Bliss}
Bliss, G. A., {\it An Integral Inequality}, J. London Math. Soc. 5 (1930), no. 1, 40-46.


\bibitem{B-TF}
Byeon, J., Takahashi, F., 
{\it Hardy's inequality in a limiting case on general bounded domains}, 
Commun. Contemp. Math. 21 (2019), no. 8, 1850070, 24 pp.

\bibitem{BV}
Brezis, H., V\'{a}zquez, J. L., {\it Blow-up solutions of some nonlinear elliptic problems}, 
\newblock Rev. Mat. Univ. Complut. Madrid 10 (1997), No. 2, 443-469.

\bibitem{CC}
Carleson, L., Chang, S.-Y. A., 
{\it On the existence of an extremal function for an inequality of J. Moser}, 
Bull. Sci. Math. (2) 110 (1986), no. 2, 113-127. 



\bibitem{CR}
Csat\'o, G., Roy, P., {\it Extremal functions for the singular Moser-Trudinger inequality in $2$ dimensions}, Calc. Var. Partial Differential Equations 54 (2015), no. 2, 2341-2366.

\bibitem{CRN}
Csat\'o, G., Nguyen, Van Hoang, Roy, P., {\it Extremals for the singular Moser-Trudinger inequality via $n-$harmonic transplantation}, J. Differential Equations 270 (2021), 843-882.

\bibitem{DD}
D'Ambrosio, L., Dipierro, S., {\it Hardy inequalities on Riemannian manifolds and applications}, Ann. Inst. H. Poincar\'e Anal. Non Lin\'eaire 31 (2014), no. 3, 449-475.




\bibitem{FKR}
A. Fabricant, N. Kutev, and T. Rangelov, 
{\it Hardy-type inequality with double singular kernels}, 
Centr. Eur. J. Math., 11(9):1689-1697, 2013.

\bibitem{F}
Flucher, M., {\it Extremal functions for the Trudinger-Moser inequality in 2 dimensions}, Comment. Math. Helv. 67 (1992), no. 3, 471-497.

\bibitem{F book}
Flucher, M., {\it Variational problems with concentration}, Progress in Nonlinear Differential Equations and their Applications, 36. Birkh\"auser Verlag, Basel, 1999. viii+163 pp. 

\bibitem{H}
Hersch, J., {\it Transplantation harmonique, transplantation par modules, et th\'eor\`emes isop\'erim\'etriques. (French. English summary)}, 
Comment. Math. Helv. 44 (1969), 354-366.

\bibitem{HK}
Horiuchi, T., Kumlin, P., {\it On the Caffarelli-Kohn-Nirenberg-type inequalities involving critical and supercritical weights}, Kyoto J. Math. 52 (2012), no. 4, 661-742. 

\bibitem{I}
Ioku, N., {\it Attainability of the best Sobolev constant in a ball}, Math. Ann. 375 (2019), no. 1-2, 1-16.

\bibitem{II}
Ioku, N., Ishiwata, M., {\it A Scale Invariant Form of a Critical Hardy Inequality}, Int. Math. Res. Not. IMRN (2015), no. 18, 8830-8846.

\bibitem{KV}
Kichenassamy, S., V\'eron, L., {\it Singular solutions of the $p-$Laplace equation},  Math. Ann. 275 (1986), no. 4, 599-615.



\bibitem{L}
Lin, K. C., 
{\it Extremal functions for Moser's inequality}, 
Trans. Am. Math. Soc., 348 (1996), pp. 2663-2671.

\bibitem{Lind}
Lindqvist, P., {\it A Remark on the Kelvin Transform for a Quasilinear Equation}, arXiv: 1606.02563v1, (2016). 


\bibitem{Mazya}
V. Maz'ya:
{\em Sobolev spaces with applications to elliptic partial differential equations. Second, revised and augmented edition,}
\newblock Grundlehren der Mathematischen Wissenschaften, {\bf 342}.
Springer, Heidelberg, 2011. xxviii+866 pp.


\bibitem{M}
Moser, J., {\it A sharp form of an inequality by N. Trudinger}, Indiana Univ. Math. J. 20 (1970/71), 1077-1092.




\bibitem{Os}
Os\c{e}kowski, A., {\it A new approach to Hardy-type inequalities}, Arch. Math (Basel). \textbf{104} (2015), 165-176.

\bibitem{S(Rellich)}
Sano, M., {\it Explicit optimal constants of two critical Rellich inequalities for radially symmetric functions}, arXiv: 2002.04768v2.

\bibitem{S(JDE)}
Sano, M., {\it Extremal functions of generalized critical Hardy inequalities}, J. Differential Equations 267 (2019), no. 4, 2594-2615.

\bibitem{S(ArXiv)}
Sano, M., {\it Improvements and generalizations of two Hardy type inequalities and their applications to the Rellich type inequalities}, arXiv: 2104.01737.

\bibitem{S(NA)}
Sano, M., {\it Minimization problem associated with an improved Hardy-Sobolev type inequality}, Nonlinear Anal. 200 (2020), 111965, 16 pp.

\bibitem{S(RIMS)}
Sano, M., {\it Two limits on Hardy and Sobolev inequalities}, RIMS K\^oky\^uroku 2172 
, (2020), 105-119.

\bibitem{SS}
Sano, M., Sobukawa, T., {\it Remarks on a limiting case of Hardy type inequalities},  Math. Inequal. Appl. 23 (2020), no. 4, 1425-1440.

\bibitem{ST}
Sano, M., Takahashi, F., {\it Scale invariance structures of the critical and the subcritical Hardy inequalities and their improvements}, Calc. Var. Partial Differential Equations 56 (2017), no. 3, Art. 69, 14 pp.

\bibitem{TF}
Takahashi, F., 
{\it A simple proof of Hardy's inequality in a limiting case}, 
Arch. der Math. 104 (2015), no. 1, 77-82.





\bibitem{T}
Trudinger, N. S., {\it On imbeddings into Orlicz spaces and some applications}, J. Math. Mech. 17 (1967), 473-483.


\bibitem{Z}
Zographopoulos, N. B. {\it Existence of extremal functions for a Hardy-Sobolev inequality}, J. Funct. Anal. 259 (2010), no. 1, 308-314.
\end{thebibliography}
\end{document}